\theoremstyle{plain}
\newtheorem{thm}{Theorem}
\newtheorem{proposition}[thm]{Proposition}
\newtheorem{corollary}[thm]{Corollary}
\newtheorem{lemma}[thm]{Lemma}
\newtheorem{definition}[thm]{Definition}
\newtheorem{remark}[thm]{Remark}
\newtheorem{conjecture}[thm]{Conjecture}
\DeclareMathOperator{\grad}{grad}
\DeclareMathOperator{\trace}{trace}
\DeclareMathOperator{\Id}{id}
\DeclareMathOperator{\End}{End}
\DeclareMathOperator{\rank}{rank}
\DeclareMathOperator{\Hess}{Hess}
\DeclareMathOperator{\ddiv}{div}
\newcommand*{\R}{\mathbb{R}}
\newcommand*{\Hh}{\mathbb{H}}
\newcommand*{\Ss}{\mathbb{S}}
\newcommand*{\HH}{\mathcal{H}}
\title{Horospherical mean curvature functions and D'Atri spaces}
\author{Gerhard Knieper, JeongHyeong Park and Norbert Peyerimhoff}
\date{}
\begin{document}

\maketitle

\abstract{We consider simply connected Riemannian manifolds without conjugate points for which the horospherical mean curvature function is continuous, reversible and invariant under the geodesic flow. We show under mild additional 
curvature tensor conditions that rank one manifolds in this family are automatically asymptotically harmonic. In particular, compact rank one manifolds of this kind must be locally symmetric spaces of negative curvature. Moreover, we show under the same conditions that 
rank one D'Atri spaces without conjugate points are harmonic.
An earlier result of this type was proved by Druetta for certain homogeneous D'Atri spaces.
}

\tableofcontents

\section{Introduction and statement of results}
\label{sec:intro}

In this paper, we introduce and investigate a new class of Riemannian manifolds without conjugate points, which we call \emph{manifolds with invariant horospherical mean curvature functions}. Our main results are presented in Subsection \ref{subsec:intro1}. In Subsection \ref{subsec:intro2}, we briefly explain our motivation to study these manifolds.

\subsection{Results}
\label{subsec:intro1}

All our Riemannian manifolds $(M,g)$ are assumed to be connected and complete without mentioning. Simply connected Riemannian manifolds will be always denoted by $(X,g)$. If such a simply connected manifold has no conjugate points, the associated Busemann function $b_v \in C^1(X)$ a unit tangent vector $v \in SX$ is defined by
$$ b_v(x) = \lim_{t \to \infty} d(c_v(t),x)-t, $$
where $c_v: \R \to X$ is the geodesic with initial condition $\dot c_v(0) = v$ and $d: X \times X \to [0,\infty)$ is the distance function of $X$. If $(X,g)$ has $C^2$-Busemann functions, the \emph{horospherical mean curvature function} $h: SX \to \R$ is defined as
\begin{equation} \label{eq:hfirst} 
h(v) = \Delta b_v(\pi(v)), 
\end{equation}
where $\Delta = - \ddiv \circ \grad$ is the positive Laplacian and $\pi: SX \to X$ the footpoint projection. This notion stems from the fact that the level sets of Busemann functions are horospheres and $h(v)$ agrees with their mean curvature at $\pi(v)$. Since these concepts are invariant under isometries and each Riemannian manifold $(M,g)$ without conjugate points is given by $M = X/\Gamma$ with $\Gamma \subset {\rm{Iso}}(X)$ the group of deck transformations, the horospherical mean curvature function is also defined on the quotient $SM$, that is $h: SM \to \R$. In fact, the horospherical mean curvature function can also be defined via Jacobi tensors without the condition of $C^2$-Busemann functions which will be discussed in the next section (see Definition \ref{def:horomeancurvfunc}). 

In this paper, we use a generalization of the \emph{geometric rank} $\rank(v)$ of a unit vector $v \in SM$, due to Knieper \cite{K}, which was originally defined on Hadamard manifolds in \cite{BBE-85} as the dimension of parallel Jacobi fields along $c_v$. More details can be found in Section \ref{sec:background}. Note that this notion satisfies $\rank(v)=\rank(-v)$ (\emph{reversibility}) and $\rank(\phi^t v)=\rank(v)$ (\emph{invariance under the geodesic flow} $\phi^t$), and the rank of a manifold $\rank(M)$ is defined as the minimum $\min_{v \in SM} \rank(v)$.

At various places in our paper we refer to results provided in the source \cite{K0} and use consistently the following notation: Lemma I.2.4 refers to Lemma 2.4 on page 461 in Chapter 1 of \cite{K0} (and not to Lemma 2.4 in Chapter 2 of \cite{K0}).   

Of particular focus in the paper is the following class of manifolds.

\begin{definition} \label{def:manwithinvhoro} A Riemannian manifold $(M,g)$ is called a \emph{manifold with invariant horospherical mean curvature function} if $M$ does not have conjugate points and if its horospherical mean curvature function $h: SM \to \R$ has the following properties:
\begin{itemize}
    \item[(i)] $h$ is continuous.
    \item[(ii)] Reversibility: $h(v) = h(-v)$ for all $v \in SM$.
    \item[(iii)] Invariance under the geodesic flow: $h(\phi^t v) = h(v)$ for all $v \in SM$ and $t \in \R$.
\end{itemize}
\end{definition}

\begin{remark} In the case of a non-positively curved manifold $(M,g)$, the continuity condition (i) can be dropped from Definition \ref{def:manwithinvhoro}, since it follows already from the fact that those manifolds have continuous asymptote (see Remark \ref{rem:continasympt}).
\end{remark}

Special examples of manifolds with invariant horospherical mean curvature functions are \emph{asymptotically harmonic manifolds}, i.e., manifolds for which $h$ is constant. This special class emcompasses all Euclidean spaces, rank one symmetric spaces and Damek-Ricci spaces. Other examples of manifolds with invariant horospherical mean curvature functions, for which $h$ is not constant, are higher rank symmetric spaces of non-compact type and D'Atri spaces without conjugate points and continuous $h$.

Our first result states that the class of manifolds with invariant horospherical mean curvature functions is closed under taking Riemannian products. This result is proved in Section \ref{sec:background} and is closely related to an observation by Zimmer for asymptotically harmonic manifolds (\cite[Lemma 36]{Zi-12}).

\begin{thm} \label{thm:riemprod}
Let $(M_i,g_i)$, $i=1,2$, be two manifolds without conjugate points and $(M,g)$ be their Riemannian product.
Then $M$ is a manifold with invariant horospherical mean curvature function if and only if both $M_1, M_2$ are manifolds with invariant horospherical mean curvature functions.
\end{thm}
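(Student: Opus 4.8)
The plan is to reduce the entire statement to a single \emph{decomposition formula} for $h$ on the product and then read off all three defining properties from it. Write the product as $(M,g)=(M_1\times M_2,\,g_1\oplus g_2)$. Since geodesics of a Riemannian product split, every unit vector $v\in SM$ can be written uniquely as $v=(\cos\theta\,u_1,\,\sin\theta\,u_2)$ with $\theta\in[0,\pi/2]$ and $u_i\in SM_i$ (the vector $u_i$ being irrelevant, and omitted, when its coefficient vanishes), and the geodesic through $v$ is $c_v(t)=(c_{u_1}(t\cos\theta),\,c_{u_2}(t\sin\theta))$. Because Jacobi fields along $c_v$ split into Jacobi fields along the two factor geodesics, the product of two manifolds without conjugate points again has no conjugate points, so $h$ and the horospherical mean curvature functions $h_1,h_2$ of the two factors are all well defined. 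The heart of the argument is the identity
\begin{equation}\label{eq:prod-decomp}
 h(v)=\cos\theta\;h_1(u_1)+\sin\theta\;h_2(u_2),
\end{equation}
with the convention that a term is dropped when its coefficient is $0$.

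To prove \eqref{eq:prod-decomp} I would offer two routes and adopt the second as the robust one. The first is via Busemann functions: a direct asymptotic expansion, using $d(c_v(t),x)^2=d_1(c_{u_1}(t\cos\theta),x_1)^2+d_2(c_{u_2}(t\sin\theta),x_2)^2$ together with the factorwise asymptotics $d_i(c_{u_i}(s),x_i)=s+b^{(i)}_{u_i}(x_i)+o(1)$, yields after expanding the square root
\[
 b_v(x_1,x_2)=\cos\theta\;b^{(1)}_{u_1}(x_1)+\sin\theta\;b^{(2)}_{u_2}(x_2),
\]
and then \eqref{eq:prod-decomp} follows from $\Delta=\Delta_1+\Delta_2$ evaluated at $\pi(v)$. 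This route needs $C^2$ Busemann functions, so for the general case I would instead use the Jacobi-tensor definition (Definition~\ref{def:horomeancurvfunc}). Along $c_v$ the curvature operator is block diagonal, with $M_i$-block equal to the factor curvature operator rescaled by $\cos^2\theta$ (resp.\ $\sin^2\theta$) because of the reparametrisation by the factor speed. Substituting $s=t\cos\theta$ turns the $M_1$-block of the Jacobi equation into the unrescaled Jacobi equation of $M_1$, so the stable Jacobi tensor of the product is block diagonal with blocks the reparametrised stable Jacobi tensors of the factors; tracing the associated Riccati solution at $t=0$ produces exactly the coefficients $\cos\theta$ and $\sin\theta$ in \eqref{eq:prod-decomp}. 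I expect the main obstacle to sit here: one must check that the block-diagonal, reparametrised solution really is the \emph{stable} solution on the product (e.g.\ via its characterisation as a limit of solutions vanishing at $+\infty$, which respects the splitting), and in the Busemann route one must guarantee enough regularity to legitimately apply $\Delta$ and control the error terms.

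Granting \eqref{eq:prod-decomp}, both implications are routine. For the ``if'' direction, assume $h_1,h_2$ are invariant horospherical mean curvature functions. Reversibility and flow invariance are immediate from \eqref{eq:prod-decomp}, since the decomposition of $-v$ is $(\cos\theta(-u_1),\sin\theta(-u_2))$ and that of $\phi^t v$ is $(\cos\theta\,\phi_1^{\,t\cos\theta}u_1,\ \sin\theta\,\phi_2^{\,t\sin\theta}u_2)$ --- the coefficient $\theta$ is unchanged, so $h(-v)=h(v)$ and $h(\phi^t v)=h(v)$ follow from the corresponding properties of $h_1,h_2$. Continuity of $h$ holds on the open set $\theta\in(0,\pi/2)$ by continuity of $(\theta,u_1,u_2)\mapsto v$ and of $h_1,h_2$; at the boundary values $\theta=0,\pi/2$, where one factor vector degenerates, the offending term is killed by its vanishing coefficient, since $h_1,h_2$ are locally bounded (being continuous) as $\pi(v)$ ranges over a compact neighbourhood.

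For the ``only if'' direction, fix $x_2^0\in M_2$ and use the continuous embedding $\iota:SM_1\to SM$, $\iota(u_1)=(u_1,0)$ based at $(\pi(u_1),x_2^0)$, corresponding to $\theta=0$. By \eqref{eq:prod-decomp} we have $h_1=h\circ\iota$; moreover $\iota(-u_1)=-\iota(u_1)$ and $\phi^t\circ\iota=\iota\circ\phi_1^t$, because the slice $M_1\times\{x_2^0\}$ is geodesically preserved with $\theta=0$. Hence continuity, reversibility and flow invariance of $h$ pull back to the same properties of $h_1$, and symmetrically for $h_2$. This completes both directions.
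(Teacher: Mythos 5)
Your proposal is correct, and its skeleton coincides with the paper's: everything is reduced to the decomposition $h(v)=\cos\theta\,h_1(u_1)+\sin\theta\,h_2(u_2)$ together with the splitting of the geodesic flow $\phi^t_X(\cos\theta\,u_1,\sin\theta\,u_2)=(\cos\theta\,\phi_1^{t\cos\theta}u_1,\sin\theta\,\phi_2^{t\sin\theta}u_2)$, from which the three defining properties are read off. The difference is how the key formula is proved. The paper uses your \emph{first} route: Proposition \ref{prop:riemprodbus} establishes $b_{\tilde v}(x,y)=\alpha b_v(x)+\beta b_w(y)$ by the asymptotic expansion of $d(c_{\tilde v}(t),(x,y))=\bigl(d^2(c_v(\alpha t),x)+d^2(c_w(\beta t),y)\bigr)^{1/2}$, and then applies $\Delta_X=\Delta_{X_1}\otimes\Id_{X_2}+\Id_{X_1}\otimes\Delta_{X_2}$. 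Your concern about $C^2$ regularity on that route is legitimate, but in the paper's setting it is resolved by Lemma \ref{lem:contasymp}: continuity of $h$ is part of the hypothesis on whichever side of the equivalence one starts from, and it forces the relevant Busemann functions to be $C^2$, so the identification $h(v)=\Delta b_v(\pi(v))$ is available. Your adopted Jacobi-tensor route is genuinely different and buys something: it derives the decomposition directly from Definition \ref{def:horomeancurvfunc}, unconditionally for \emph{any} Riemannian product of manifolds without conjugate points, so the continuity equivalence then falls out of the formula itself (with your local-boundedness argument at $\theta\in\{0,\pi/2\}$, a point the paper's proof leaves implicit) rather than through the regularity bootstrap. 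Your explicit embedding $\iota(u_1)=(u_1,0)$ for the converse direction is likewise a clean way of organizing what the paper compresses into its ``if and only if'' statements.

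One point in your Jacobi-tensor argument needs patching. For $\theta\in(0,\pi/2)$ the orthogonal complement $v^\perp$ is \emph{not} $u_1^\perp\oplus u_2^\perp$: it has dimension $\dim M_1+\dim M_2-1$, one more than that sum, and decomposes as $(u_1^\perp\oplus 0)\oplus(0\oplus u_2^\perp)\oplus\R\nu$ with $\nu=(-\sin\theta\,u_1,\cos\theta\,u_2)$. So the stable and unstable tensors are block diagonal with \emph{three} blocks, not two. On the line $\R\nu$ the Jacobi operator of the product vanishes (since $R_1(u_1,u_1)u_1=0$ and $R_2(u_2,u_2)u_2=0$), so the corresponding scalar Jacobi equation is $j''=0$; the solution with $j(0)=1$, $j(r)=0$ is $1-t/r$, whose derivative at $0$ tends to $0$ as $r\to\infty$. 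Hence $S(v)$ and $U(v)$ annihilate $\nu$ and this block contributes nothing to $\trace U(v)$, so your formula and everything downstream survive unchanged; but without this block the claimed block structure has a dimension mismatch and, strictly speaking, does not describe the stable tensor of the product. The same flat-direction computation also covers the degenerate case $\theta=0$, where the entire second-factor block of $v^\perp$ is flat, consistent with your convention of dropping the term with vanishing coefficient.
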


Next, we present the main results of this paper. 

\begin{thm} \label{thm:main1}
Let $(M,g)$ be a rank one manifold with invariant horospherical mean curvature function, whose curvature tensor together with its covariant derivative is uniformly bounded. Then $M$ is asymptotically harmonic.
\end{thm}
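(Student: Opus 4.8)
The plan is to exploit the dynamical structure of the geodesic flow on the unit tangent bundle, combined with the rank-one hypothesis, to force the horospherical mean curvature function $h$ to be constant. The key object is the stable (unstable) Jacobi tensor whose trace along a geodesic gives the horospherical mean curvature, and the rank-one condition singles out a dense orbit (or at least a topologically large, flow-invariant set) on which hyperbolic behavior is available. Since $h$ is continuous, reversible, and invariant under the geodesic flow by Definition \ref{def:manwithinvhoro}, I would first translate these hypotheses into statements about the stable Riccati solution $U^s_v$: invariance under $\phi^t$ says $h(v) = \trace U^s_v$ is constant along each orbit, and reversibility links the stable and unstable data at $v$ and $-v$.

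First I would set up the Riccati framework. Along $c_v$ the stable tensor $U(t) = U^s_{\phi^t v}$ satisfies the matrix Riccati equation $U' + U^2 + R = 0$, where $R(t)$ is the curvature endomorphism along $c_v$; taking traces gives $\frac{d}{dt} h(\phi^t v) = -\trace(U^2) - \trace R = -\|U\|^2 - \mathrm{Ric}(\dot c_v)$. Flow-invariance of $h$ forces the left-hand side to vanish, so along every geodesic we obtain the pointwise identity $\|U^s\|^2 + \mathrm{Ric} = 0$ in an averaged/integrated sense; more carefully, invariance gives $\int \left(\|U^s\|^2 + \mathrm{Ric}\right)\,dt = 0$ along orbits, which I would combine with reversibility to control the symmetric part. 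The uniform bound on the curvature tensor and its covariant derivative is what guarantees the stable tensor $U^s$ is well-defined, uniformly bounded, and varies continuously in $v$ (standard in the no-conjugate-points setting, cf.\ the Jacobi-tensor description referenced after Definition \ref{def:horomeancurvfunc}); without it the relevant limits and derivatives need not exist.

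Next I would use the rank-one hypothesis to locate a vector $w$ along whose geodesic the flow is genuinely hyperbolic, so that $U^s_w$ and $U^u_w = -U^s_{-w}$ are the honest contracting and expanding solutions, and the only parallel (neutral) Jacobi field is the geodesic direction itself. At such a regular vector the stable and unstable tensors satisfy $U^s - U^u \geq 0$ with equality only on the flow direction, and reversibility $h(v)=h(-v)$ translates to $\trace U^s_v = -\trace U^u_v$, forcing $\trace(U^s_v - U^u_v) = 2h(v)$. The strategy is then to show that the flow-invariant function $v \mapsto \trace(U^s_v)$ achieves its extrema, and that at an extremizing orbit the derivative identity above forces $\|U^s\|^2 + \mathrm{Ric} \equiv 0$, which together with the rank-one non-degeneracy pins down the value; by density of the forward and backward orbit of a rank-one vector (minimality of the geodesic flow on the appropriate subset, available from the rank-one dynamics as in \cite{K}) and continuity of $h$, this constant value propagates to all of $SM$, yielding that $h$ is constant, i.e.\ $M$ is asymptotically harmonic.

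The main obstacle I anticipate is the transition from the infinitesimal/integrated identity to a genuine pointwise constancy of $h$: flow-invariance only gives that $h$ is constant \emph{along each individual orbit}, and reversibility only relates $v$ to $-v$, so the crux is a recurrence-or-density argument showing that the orbit closure of a rank-one vector is large enough that continuity of $h$ forces a single global constant. This is precisely where the rank-one condition must be used in its dynamical strength — not merely as a statement about parallel Jacobi fields but as a source of topological transitivity (or at least of a dense set of vectors whose orbit closures overlap). Controlling the stable tensor uniformly so that $h$ is continuous and the Riccati analysis is valid is exactly what the uniform curvature bounds provide, so I expect those hypotheses to enter decisively at this step rather than in the purely algebraic Riccati computation.
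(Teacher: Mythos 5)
There is a genuine gap, and it sits exactly at the point you yourself identify as the crux. Your mechanism for passing from orbit-wise constancy of $h$ to global constancy is ``density of the forward and backward orbit of a rank one vector (minimality of the geodesic flow on the appropriate subset)''. No such dynamical transitivity is available here. The theorem makes no compactness assumption, and its proof reduces (as the paper does) to the simply connected cover $X$, where the geodesic flow has \emph{no} recurrence at all: every orbit leaves every compact set, orbit closures are just the orbits themselves, and there is no dense orbit on any flow-invariant subset of $SX$. Even for compact $M$, topological transitivity of the geodesic flow for rank one manifolds \emph{without conjugate points} is not a known general fact one may invoke (the rank one rigidity/transitivity theory of Ballmann et al.\ lives in nonpositive curvature), and it is not what the rank one hypothesis gives you through \cite{K}. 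Your extremum argument does not rescue this: the pointwise trace-Riccati identity $\trace(U^2)+\mathrm{Ric}=0$ holds along \emph{every} orbit of \emph{every} manifold with invariant horospherical mean curvature function (including higher rank symmetric spaces, where $h$ is genuinely non-constant), so it carries no extra information at an extremizing orbit --- and without compactness the extrema need not even be attained.

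The paper replaces recurrence by a completely different propagation mechanism: the stable/unstable \emph{foliations}. Concretely, it shows (Step 1) that $\det D$ and $\trace D$ (with $D=U-S$) are flow-invariant and reversible; (Step 2) that on a rank one vector's stable and unstable leaves these functions are locally, hence globally, constant --- this is where the uniform bounds on $R$ and $\nabla R$ enter, through quantitative contraction estimates of \cite{KP} (Propositions \ref{prop:Sdiffar} and \ref{prop:SLbeta-estimate}), valid because the rank one condition gives a uniform lower bound $D\ge\rho\,\Id$ on the relevant set $SX_\alpha$; (Step 3) that $(p,v)\mapsto-\grad b_v(p)$ is continuous on $X\times SX_\alpha$; and (Step 4) that the map $F_{v_0}(x,y)=-\grad b_{\grad b_{-v_0}(x)}(y)$ is continuous and injective, so by Brouwer's domain invariance the level set $\{v:\det D(v)=\delta\}$ is open as well as closed in the connected space $SX$, forcing it to be all of $SX$; the same argument then handles $\trace D$, and $h=\tfrac12\trace D$ is constant. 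So the rank one hypothesis is used not as ``a source of topological transitivity'' but as the source of the definite lower bound on $D$ that makes stable leaves genuinely contracting and makes the weak stable/unstable saturation of a single vector an \emph{open} set. Without an argument of this kind (or some substitute for the missing recurrence), your proposal does not close.
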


The proof of this rigidity result is given in Section \ref{sec:rankone}. Applying Theorem 1.5 in \cite{KP} yields the following consequence.

\begin{corollary} Let $(M,g)$ be a rank one manifold with invariant horospherical mean curvature function, whose curvature tensor together with its covariant derivative is uniformly bounded. Then $M$ has constant horospherical mean curvature $h$, is Gromov hyperbolic, its geodesic flow is Anosov, and its volume growth is purely exponential with volume entropy equals $h$.
\end{corollary}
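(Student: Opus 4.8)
The plan is to deduce this corollary directly, by chaining the rigidity statement of Theorem \ref{thm:main1} with the cited result \cite[Theorem 1.5]{KP}; accordingly the argument is short and amounts to verifying that hypotheses match up.

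First I would invoke Theorem \ref{thm:main1}: under the stated hypotheses—rank one, invariant horospherical mean curvature function, and uniform boundedness of the curvature tensor together with its covariant derivative—the manifold $(M,g)$ is asymptotically harmonic. By the terminology fixed in Subsection \ref{subsec:intro1}, asymptotic harmonicity means precisely that the horospherical mean curvature function $h\colon SM \to \R$ is constant. This already settles the first assertion of the corollary, and it permits me to denote by $h$ the common constant value of this function, matching the statement's notation.

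Next I would feed this conclusion into \cite[Theorem 1.5]{KP}. That theorem takes as input an asymptotically harmonic manifold whose curvature tensor and covariant derivative are uniformly bounded, and returns exactly the three remaining properties: Gromov hyperbolicity, the Anosov property of the geodesic flow $\phi^t$, and purely exponential volume growth with volume entropy equal to the constant horospherical mean curvature. Since the curvature boundedness assumptions of our corollary coincide with those required by \cite[Theorem 1.5]{KP}, and asymptotic harmonicity has just been established, the hypotheses are met verbatim and the stated conclusions transfer without modification.

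The only point demanding care—and hence what I would regard as the main (though mild) obstacle—is confirming that the normalization conventions are aligned across the two sources: in particular that the sign convention $\Delta = -\ddiv \circ \grad$ entering \eqref{eq:hfirst} is consistent with the convention used in \cite{KP}, so that the identification of the volume entropy with $h$ (rather than with $-h$ or some rescaling) is the correct one. Once the conventions are checked to agree, no further argument is needed and the corollary follows at once.
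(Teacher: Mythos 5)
Your proposal is correct and follows exactly the paper's route: the paper proves Theorem \ref{thm:main1} and then derives this corollary in one line by applying \cite[Theorem 1.5]{KP} to the resulting (rank one) asymptotically harmonic manifold, whose curvature hypotheses match verbatim. The only cosmetic difference is your added check of sign conventions, which is harmless; just note that \cite[Theorem 1.5]{KP} is stated for \emph{rank one} asymptotically harmonic manifolds, a hypothesis that is of course retained here.
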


Together with \cite[Theorem 3.6]{K}, which involves fundamental results by Benoist, Foulon, Labourie \cite{BFL,FL} and Besson, Courtois, Gallot \cite{BCG}, we have the following under the additional assumption of compactness.

\begin{corollary} Let $(M,g)$ be a compact rank one manifold with invariant horospherical mean curvature function, whose curvature tensor together with its covariant derivative is uniformly bounded. Then $M$ is a locally symmetric space of negative curvature.
\end{corollary}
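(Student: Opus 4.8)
The plan is to chain the two rigidity inputs already in hand: reduce the compact rank one situation to asymptotic harmonicity via Theorem \ref{thm:main1}, and then feed the resulting manifold into the structural classification of compact asymptotically harmonic rank one manifolds recorded in \cite[Theorem 3.6]{K}.

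First I would check that the hypotheses of the corollary are precisely those of Theorem \ref{thm:main1}: $(M,g)$ is a rank one manifold with invariant horospherical mean curvature function whose curvature tensor together with its covariant derivative is uniformly bounded. For compact $M$ this boundedness is in fact automatic, since the curvature tensor and its covariant derivative are continuous and therefore bounded on the compact unit tangent bundle. Thus Theorem \ref{thm:main1} applies and yields that $M$ is asymptotically harmonic, i.e.\ its horospherical mean curvature function $h$ is constant. The preceding corollary moreover guarantees that the geodesic flow is Anosov and that the volume entropy equals the constant $h$.

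The second and decisive step is to invoke \cite[Theorem 3.6]{K} for a compact asymptotically harmonic manifold of rank one. This is where the deep rigidity lives: constancy of $h$ together with compactness forces the Liouville measure to coincide with the measure of maximal entropy and renders the (un)stable horospherical foliations sufficiently regular, so that the rigidity of contact Anosov flows with smooth invariant structures due to Benoist, Foulon and Labourie \cite{BFL,FL} identifies the geodesic flow with that of a locally symmetric space; the minimal-entropy rigidity of Besson, Courtois and Gallot \cite{BCG} then pins down the metric. Combining these ingredients, \cite[Theorem 3.6]{K} concludes that $M$ is a locally symmetric space, and since it has rank one it must be of strictly negative curvature.

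The genuine difficulty is entirely absorbed into Theorem \ref{thm:main1} and the cited rigidity theorems; the corollary itself is only a matter of verifying that compactness supplies the remaining inputs (the automatic curvature bounds and the applicability of the entropy-rigidity machinery) and that the rank one assumption rules out higher-rank locally symmetric factors, leaving precisely a negatively curved locally symmetric space.
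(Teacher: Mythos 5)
Your proposal is correct and follows essentially the same route as the paper: apply Theorem \ref{thm:main1} (together with the preceding corollary giving the Anosov property) to obtain asymptotic harmonicity, then invoke \cite[Theorem 3.6]{K}, which rests on the rigidity results of Benoist--Foulon--Labourie \cite{BFL,FL} and Besson--Courtois--Gallot \cite{BCG}, to conclude that $M$ is locally symmetric of negative curvature. Your additional observation that compactness makes the curvature bounds automatic is a correct (if strictly unnecessary) remark that the paper itself makes elsewhere, in Proposition \ref{prop:3dimrankone}.
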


Our second main result is concerned with D'Atri spaces without conjugate points. A D'Atri space is defined by the property that the volume density of geodesic spheres is invariant under reflection in their centers. Our result states that the rank one condition for D'Atri spaces with continuous horospherical mean curvature function implies harmonicity, which is defined by the stronger condition that the volume density of geodesic spheres is independent of their centers and only a function of the radius. 

\begin{thm} \label{thm:main2} Let $(M,g)$ be a rank one D'Atri space without conjugate points, whose curvature tensor together with its covariant derivative is uniformly bounded. If the horospherical mean curvature function $h: SM \to \R$ is continuous, then $M$ is harmonic and its geodesic flow is Anosov.
\end{thm}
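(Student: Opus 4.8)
The plan is to deduce the statement from the main rigidity result of the paper and then to promote asymptotic harmonicity to harmonicity by exploiting the D'Atri symmetry. First I would place $M$ in the class of Definition \ref{def:manwithinvhoro}. Condition (i) is assumed, and conditions (ii) and (iii) hold because, as recalled in the introduction, a D'Atri space without conjugate points with continuous $h$ is automatically a manifold with invariant horospherical mean curvature function. Conceptually, reversibility $h(v)=h(-v)$ and flow-invariance $h(\phi^t v)=h(v)$ are consequences of the defining D'Atri property that the local geodesic symmetries $s_p$ are volume preserving, equivalently that the volume density is even, $\theta_p(\exp_p X)=\theta_p(\exp_p(-X))$: feeding this evenness into the realization of $h(v)$ as the limit, as $t\to\infty$, of the mean curvatures of the geodesic spheres $S(c_v(t),t)$ at $\pi(v)$ interchanges the roles of the two ends of $c_v$ and the two sides of each sphere.

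Having recorded that $M$ satisfies Definition \ref{def:manwithinvhoro}, I would apply Theorem \ref{thm:main1}: since $M$ is rank one and its curvature tensor together with its covariant derivative is uniformly bounded, $M$ is asymptotically harmonic, so $h\equiv\mathrm{const}$ and every horosphere carries the same constant mean curvature $h$. The first corollary to Theorem \ref{thm:main1} then yields directly that the geodesic flow of $M$ is Anosov (along with Gromov hyperbolicity and purely exponential volume growth with entropy $h$). This settles the Anosov assertion and reduces the theorem to the single implication that an asymptotically harmonic D'Atri space is harmonic.

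For this remaining step I would use the classical characterization that a manifold is harmonic precisely when all its (small) geodesic spheres have constant mean curvature. Fix $p$ in the universal cover and a unit-speed geodesic $c$ issuing from $p$, and let $V(r)$ be the spherical Riccati solution, i.e.\ the shape operator of $S(p,r)$ at $c(r)$, solving $V'+V^2+R=0$ with $V(r)\sim r^{-1}\Id$ as $r\to 0^+$, where $R(t)=R(\,\cdot\,,\dot c(t))\dot c(t)$. Its trace $m(r)=\trace V(r)$ is the sphere mean curvature in the direction $\dot c(0)$, and the goal is to show $m(r)$ is independent of that direction. Asymptotic harmonicity gives $m(r)\to h$ as $r\to\infty$, since by the Anosov/hyperbolic structure $V(r)$ converges (exponentially, and uniformly thanks to the bounded geometry) to a horospherical Riccati solution whose trace is $h$; the D'Atri evenness of $\theta_p$ gives the reflection symmetry $m_p(r,u)=m_p(r,-u)$ for all radii.

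The hard part will be exactly this final promotion: passing from constancy of $m$ at $r=\infty$, together with the directional reflection symmetry supplied by D'Atri, to constancy of $m$ at all finite radii and in all directions. The plan is to combine the full strength of the D'Atri hypothesis, which forces the vanishing of all odd curvature (Ledger-type) invariants and hence matches the odd Taylor data of $V$ in every direction to the radial model, with the rigidity coming from asymptotic harmonicity, which should pin down the even invariants (in particular the Einstein condition) into radial form, so that the Riccati solution with radial initial data stays radial along every geodesic and the spheres become constant-mean-curvature. This is the non-homogeneous counterpart of Druetta's theorem for homogeneous D'Atri spaces, now accessible because Theorem \ref{thm:main1} delivers asymptotic harmonicity with no homogeneity assumption.
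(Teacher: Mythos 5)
Your first two paragraphs are sound and match the paper: the D'Atri property does force reversibility and flow-invariance of $h$ (this is exactly Proposition \ref{prop:DAhoromean}, proved there via the identity $U_{v,t}'(0)=A'_{\phi^{-t}(v)}(t)A^{-1}_{\phi^{-t}(v)}(t)$), so with continuity the manifold satisfies Definition \ref{def:manwithinvhoro}; Theorem \ref{thm:main1} then gives asymptotic harmonicity, and the Anosov claim follows from the first corollary to that theorem. The genuine gap is your final step: you never prove that an asymptotically harmonic rank one D'Atri space is harmonic, you only propose to combine the vanishing of odd Ledger invariants with the hope that asymptotic harmonicity ``pins down the even invariants into radial form''. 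There is no known mechanism for this. Asymptotic harmonicity is a condition at infinity and does not translate into pointwise Taylor-coefficient (Ledger-type) data; the paper itself records that it is open whether asymptotically harmonic manifolds are harmonic, and even the Einstein condition you invoke is not known to hold for asymptotically harmonic manifolds (it is known for harmonic ones). Concretely, constancy of $m_p(r,u)$ in the limit $r\to\infty$ together with the finite-radius reflection symmetry $m_p(r,u)=m_p(r,-u)$ does not determine $m_p(r,u)$ at finite $r$: the first constraint is asymptotic, the second kills only the odd part of the direction-dependence, and nothing in your argument forces the even part of the direction-dependence to vanish at finite radii.

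The paper closes exactly this gap by a global dynamical argument rather than a local-invariant one, and it does not in fact route the harmonicity statement through the sphere expansion at all. For each fixed $t>0$ it considers the finite-radius volume density $f_t(v)=\det A_v(t)$, which by the D'Atri property (Proposition \ref{prop:datrigeodaetinv}) is continuous, reversible and invariant under the geodesic flow; it proves a Lipschitz estimate $|f_t(v')-f_t(v)|\le C_t\, d_{\mathcal{H}}(\pi(v'),\pi(v))$ for $v'$ in the stable leaf of $v$ (Proposition \ref{prop:detdetestimate}, an ODE estimate on $\frac{\partial}{\partial s}A_{\gamma(s)}(t)$ which uses the bounds $\Vert R\Vert\le R_0$, $\Vert\nabla R\Vert\le R_0'$ and the uniform lower bound $D\ge\rho\,\Id$ supplied by Step 4 of Section \ref{sec:rankone}); and it then invokes Theorem \ref{thm:metatheorem}, whose proof propagates constancy along stable leaves via the exponential contraction coming from $D\ge\rho\,\Id$, passes to unstable leaves by reversibility, and concludes by Brouwer's domain invariance that each $f_t$ is globally constant --- which is precisely the definition of harmonicity. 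If you want to repair your outline, this is the missing idea: the promotion from ``constant at infinity'' to ``constant at every radius'' is achieved by running the rank one rigidity machinery on the finite-radius densities $f_t$ themselves, not by matching curvature invariants pointwise.
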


This result extends an earlier result by Druetta for certain rank one \emph{homogeneous} D'Atri spaces without conjugate points and is proved in Section \ref{sec:rankoneDAtri}.

Note that Theorem 3.6 in \cite{K} implies again, for compact rank one D'Atri spaces without conjugate points and with continuous horospherical mean curvature functions are locally symmetric spaces of negative curvature.

\subsection{History, context and motivation}
\label{subsec:intro2}

Let us briefly explain our motivation to consider the new class of manifolds with invariant horospherical mean curvature functions.



We begin with the class of {\bf{ harmonic manifolds}}. They have a long history and emerged from the question whether the equation $\Delta f = 0$ on a Riemannian manifold has always a radial solution (see \cite[Paragraph 6.8]{Be}). This property is equivalent to the fact that the mean curvature of geodesic spheres depends only on the radius (\cite[Paragraph 6.19]{Be}). Examples of those manifolds, in the simply connected case, are all Euclidean spaces and symmetric spaces of negative curvature. The famous \emph{Lichnerowicz Conjecture} states that there are no other simply connected harmonic manifolds. Since harmonic manifolds are Einstein, this conjecture holds true in dimensions $2$ and $3$ (see \cite[Paragraph 6.52]{Be}). The $4$- and $5$-dimensional cases were confirmed by Walker \cite{W} in 1949 and Nikolayevsky \cite{Ni} in 2005, respectively.
A general proof of this conjecture for all simply connected compact manifolds was given in 1990 by Szabo \cite{Sz}. It is important to note that non-compact harmonic manifolds do not have conjugate points (see \cite[Th{\'e}or{\`e}me 2.1]{All}).
It came as a surprise, when Damek and Ricci found non-compact non-symmetric harmonic manifolds in 1992, starting in dimension $7$ (see \cite{DR}). Under the additional assumption of homogeneity, it was proved by Heber in \cite{He} that there are no further simply connected examples.
Even though there are partial classification results (see, e.g., \cite{RS,K}), a complete classification of all non-compact harmonic manifolds is still open.

Ledrappier \cite{Led1,LedShu} introduced {\bf{asymptotically harmonic manifolds}} (in the special context of simply connected manifolds without focal points) via various equivalent characterizations, one of them being manifolds without conjugate points whose horospheres have all the same constant mean curvature. Since horospheres are defined as the level sets of Busemann functions $b_v$ and can be viewed as limits of increasing geodesic spheres, this class contains all non-compact harmonic manifolds. It is open whether there exist asymptotically harmonic manifolds which are not harmonic.
Asymptotically harmonic manifolds are not closed under taking Riemannian products, and higher rank symmetric spaces do also not belong to this class. This can be remedied by the weaker requirement that only equidistant horospheres have the same constant mean curvature. An equivalent condition, in the case of $C^2$ Busemann functions, is given by $\Delta b_v \equiv h(v)$ with a function $h$ on the unit tangent bundle, given by the constant mean curvature of the associated horosphere. We propose to refer to such spaces as \emph{weakly asymptotically harmonic spaces (wahm's)}. 
In these spaces, all functions $e^{-h(v) b_v}$ are harmonic. 

Another widely studied class of Riemannian manifolds are {\bf{D'Atri spaces}} (see \cite{KPV}). They are defined by the property that the geodesic inversion is volume preserving or, equivalently, that geodesic inversion preserves the mean curvature of geodesic spheres. This class comprises (see \cite[Sections 4.1 and 4.4]{KPV}) both 
\begin{itemize}
\item \emph{naturally reductive homogeneous spaces} (see \cite[Theorem 2.3]{TV} for a geometric characterization of this class) and 
\item \emph{weakly symmetric spaces} (for each geodesic $c: \R \to X$ there is an isometry $f$ reversing the geodesic, $f(c(t))=c(-t)$, see \cite{Zi}). 
\end{itemize}
In order to include all D'Atri spaces without conjugate points in our new class of manifolds, we weaken the condition of the above-mentioned wahm's to the pointwise condition 
$$ h(v) = \Delta b_v(\pi(v)) = h(v) $$ 
for all unit tangent vectors $v$, that is, $\Delta b_v$ agrees with the mean curvature of the level set $b_v^{-1}(0)$ only at the point $\pi(v)$. An equivalent definition of these spaces is the condition that the horospherical mean curvature function $h(v)$ is invariant under the geodesic flow and reversible.    

For technical reasons, we assume additionally that the function $h$ is continuous, and we refer to manifolds with these very general invariance properties as \emph{manifolds with invariant horospherical mean curvature functions}. (In the special case of non-positive curvature or, more generally, without focal points, the manifolds have continuous asymptote and continuity of $h$ is automatically satisfied.) In this paper, we prove rigidity results for rank one manifolds in this class, where we use a slight generalization of the geometric rank notion in \cite{BBE-85} to manifolds without conjugate points, due to \cite{K}. For a survey on harmonic and asymptotically harmonic manifolds see also
\cite{K16}.

\section{Background, relevant concepts and notation}
\label{sec:background}

Of crucial importance in our investigations are stable and unstable Jacobi tensors. In Subsection \ref{subsec:jacobitensors}, we recall their fundamental properties and the definition of the geometric rank used in this paper. In Subsection \ref{subsec:riemprod}, we provide a proof that our class of manifolds with invariant horospherical mean curvature functions is closed under taking Riemannian products.

\subsection{Jacobi tensors, Riccati equation, Busemann functions and rank}
\label{subsec:jacobitensors}

The notation in this paper for Riemannian manifolds $(M,g)$ follows the notation used in \cite{KP}. The tangent bundle of $M$ is denoted by $TM$, the unit tangent bundle by $SM$, and the footpoint projection by $\pi: TM \to M$. For $v \in SM$, we denote by $c_v: \R \to M$ the corresponding geodesic with $\dot c_v(0) = v$. The geodesic flow of $M$ is denoted by $\phi^t: SM \to SM$. An orthogonal Jacobi tensor $J(t)\in \End(\phi^t(v))$ along $c_v$ is a solution of
$$ J''(t) + R_v(t) J(t) = 0, $$
where $J'' = \frac{D^2}{dt^2} J$ is the second covariant derivative of $J$ along $c_v$ and $R_v(t) \in \End_{\rm{sym}}(\phi^t(v))$ is the Jacobi operator along $c_v$, given by $R_v(t) = R(\cdot,\phi^t(v))\phi^t(v)$. 

From now on, we require additionally that our manifold has no conjugate points, to ensure that the following objects are well-defined. For $r > 0$, we consider the following orthogonal Jacobi tensors along $c_v$:
\begin{align*}
S_{v,r}(0) = \Id_{v^\bot}, &\quad S_{v,r}(r) = 0, \\
U_{v,r}(0) = \Id_{v^\bot}, &\quad U_{v,r}(-r) = 0.
\end{align*}
Note that we have $U_{v,r}(t) = S_{-v,r}(-t)$. 
The \emph{stable} and \emph{unstable Jacobi tensor} along $c_v$ are defined via the following initial conditions
$$ S_v(0) = \Id_{v^\bot}, \quad S_v'(0) = \lim_{r \to \infty} S_{v,r}'(0), $$
and similarly
$$ U_v(0) = \Id_{v^\bot}, \quad U_v'(0) = \lim_{r \to \infty} U_{v,r}'(0). $$
The existence of $\lim_{r \to \infty }S_{v,r}'(0)$ follows from the monotonicity $S'_{v,r}(0) <S'_{v,s}(0)$ for $r < s$ and
$S'_{v,r}(0) < U'_{v,1}(0)$ for all $r > 0$ (see proof of Lemma I.2.14 in \cite{K0} for the upper bound). Moreover, we introduce the symmetric endomorphisms
$$ S(v) = S'_v(0) \quad \text{and} \quad U(v) = U'_v(0). $$
Note that $U(v) = - S(-v)$ and we have the relation
\begin{equation} \label{eq:Srel} 
S_{\phi^{t_0} v}(t) = S_v(t+t_0) S_v^{-1}(t_0) 
\end{equation}
for all $t, t_0 \in \R$, since the Jacobi tensors on both sides have the same initial conditions at $t=0$.
To see that these endomorphisms are symmetric, we use the fact (see \cite[Lemma 1.2.2]{K})
\begin{equation} \label{eq:Wronski}
\Omega(Y_1,Y_2) = \rm{const} \quad \text{for all Jacobi tensors}, 
\end{equation}
where 
$$ \Omega(A,B)(t) = B^*(t) A'(t) - (B'(t))^* A(t), $$
is the Wronskian of $A(t),B(t) \in \End(\phi^t(v)^\bot)$ and $B^*(t)$ is the adjoint of $B(t)$, by choosing $Y_1=Y_2=S_{v,r}$ and using $S_{v,r}(r)=0$.

Since we also have $U(v) \ge S(v)$ the difference 
$$ D(v) = U(v) - S(v) $$ is a non-negative symmetric endomorphism of $v^\bot$. It follows from \eqref{eq:Srel} that
\begin{equation} \label{eq:S'rel} 
S(\phi^t v) = S'_v(t) S_v^{-1}(t), 
\end{equation}
and, similarly,
\begin{equation} \label{eq:U'rel}
U(\phi^t v) = U'_v(t) U_v^{-1}(t). 
\end{equation}
Both $S$ and $U$ satisfy the \emph{Riccati equation}, that is
\begin{equation} \label{eq:Ricatti} 
S'(\phi^t(v)) + S^2(\phi^t(v)) + R_v(t)S(\phi^t(v)) = 0, 
\end{equation}
In the special case of a simply connected Riemannian manifold $(X,g)$ without conjugate points and $C^2$-Busemann functions, 
$U(v)$ is the second fundamental form of the horosphere $\HH = b_v^{-1}(0)$ at $\pi(v)$, and $U(v)$ agrees with the Hessian of $b_v$ at $\pi(v)$, that is,
$$ \langle U(v)(w_1),w_2 \rangle = (\Hess b_v)(w_1,w_2) = \langle \nabla_{w_1} \grad b_v, w_2 \rangle \quad \text{for all $w_1,w_2 \in v^\bot$.} $$
For that reason, we call the following function $h$
the horospherical mean curvature function. 

\begin{definition} \label{def:horomeancurvfunc}
Let $(M,g)$ be a Riemannian manifold without conjugate points. The function $h: SM \to \R$, given by
$$ h(v) = \trace U(v), $$
is called the \emph{horospherical mean curvature function} of $M$.
\end{definition}

The next lemma shows that continuity of the horospherical mean curvature function allows to consider the Laplacian of Busemann functions.


\begin{lemma} \label{lem:contasymp}
Let $(X,g)$ be a simply connected Riemannian manifold without conjugate points. If the horospherical mean curvature function $h: SX \to \R$ is continuous, then all Busemann functions $b_v$, $v \in SM$ and their associated horospheres, are $C^2$. Moreover, the map $v \to U(v)$ is continuous.
\end{lemma}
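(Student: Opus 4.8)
The engine of the argument is the continuity of the map $v \mapsto U(v)$, from which the $C^2$ regularity of the Busemann functions follows by approximation; I would therefore prove the continuity statement first and then deduce the regularity. I would phrase continuity of $v \mapsto U(v)$ as continuity of the scalar function $F(v,w) = \langle U(v)w, w\rangle$ on the bundle $\mathcal{B} = \{(v,w) : v \in SX,\ w \in v^\bot\}$; by polarization this is equivalent to continuity of $v \mapsto U(v)$. For finite $r$, the two–point boundary value problem defining $U_{v,r}$ has a unique solution (no conjugate points) depending continuously on $(v,r)$ by standard ODE theory, so each $F_r(v,w) = \langle U_{v,r}'(0)w, w\rangle$ is continuous. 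The monotonicity recalled in the text, via $U_{v,r}(t) = S_{-v,r}(-t)$ (whence $U_{v,r}'(0) = -S_{-v,r}'(0)$) together with the monotone increase of $S_{-v,r}'(0)$ in $r$, shows $F_r(v,w) \searrow F(v,w)$ as $r \to \infty$. Hence $F$ is an infimum of continuous functions, so it is upper semicontinuous and in particular bounded above on compact subsets of $\mathcal{B}$.

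The hypothesis is precisely that $v \mapsto \trace U(v) = h(v)$ is continuous, and I would upgrade upper semicontinuity of $F$ to continuity using this \emph{trace squeeze}. Fix $(v,w)$ with $|w| = 1$ and let $v_n \to v$, $w_n \to w$ with each $w_n \in v_n^\bot$ a unit vector; complete $w_n$ to a continuously varying orthonormal frame $e_1^{(n)} = w_n, \dots, e_m^{(n)}$ of $v_n^\bot$ converging to a frame $e_1 = w, \dots, e_m$ of $v^\bot$, where $m = \dim X - 1$. Then $\sum_i F(v_n, e_i^{(n)}) = h(v_n) \to h(v) = \sum_i F(v, e_i)$, while upper semicontinuity gives $\limsup_n F(v_n, e_i^{(n)}) \le F(v, e_i)$ for each $i$; the local upper bound from the first paragraph together with the convergent (hence bounded) traces bounds the numbers $F(v_n, e_i^{(n)})$ from below as well. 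The elementary fact that finitely many bounded sequences with $\limsup_n a_i^{(n)} \le a_i$ and $\sum_i a_i^{(n)} \to \sum_i a_i$ must each converge to $a_i$ then forces $F(v_n, w_n) \to F(v, w)$. Thus $F$ is continuous, $v \mapsto U(v)$ is continuous, and so is $v \mapsto S(v) = -U(-v)$.

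For the regularity, recall that on a simply connected manifold without conjugate points every Busemann function is $C^1$, so the asymptotic unit field $v_x := -\grad b_v(x)$ is continuous. I would realize $b_v$ as the locally uniform limit of $f_t(x) = d(c_v(t),x) - t$, which on any fixed compact set are smooth once $t$ is so large that $c_v(t)$ has left the set. On such a set $\Hess f_t(x)$ is the second fundamental form of the geodesic sphere through $x$ centered at $c_v(t)$, a finite–radius Jacobi tensor whose radius tends to $\infty$ and whose axis direction tends to $v_x$; by the monotone convergence recalled in the text, the continuity of the limit $U$ just established, and Dini's theorem, one obtains $\Hess f_t(x) \to U(v_x)$ locally uniformly in $x$, while also $f_t \to b_v$ and $\grad f_t \to \grad b_v$ locally uniformly. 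The standard calculus fact that a locally uniform limit of $C^2$ functions whose gradients and Hessians also converge locally uniformly is itself $C^2$ with the limiting derivatives then yields $b_v \in C^2$ with $\Hess b_v(x) = U(v_x)$, in agreement with the identity $\Hess b_v(\pi(v)) = U(v)$ recorded before Definition \ref{def:horomeancurvfunc}. The horospheres, being level sets of the $C^2$ function $b_v$ with unit gradient, are $C^2$ hypersurfaces.

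The crux is the trace squeeze: promoting upper semicontinuity to genuine continuity via the continuous trace. Two points require care. First, the tensors $U(v_n)$ live on the varying subspaces $v_n^\bot$; this is handled by working with the scalar function $F$ on $\mathcal{B}$ and with continuously varying frames. Second, one must rule out eigenvalues escaping to $-\infty$, which is exactly where the local upper bound from upper semicontinuity combines with the boundedness of the trace (one could alternatively invoke the lower bound $U(v) \ge S(v)$). The convergences in the regularity step are routine once continuity of $U$ is in hand, the only delicacy being the uniformity supplied by Dini's theorem.
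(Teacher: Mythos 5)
Your proposal is correct in substance, but it takes a genuinely different route from the paper: the paper's proof consists of two citations plus glue. It invokes Lemma 17 of \cite{Zi-14}, whose Dini-type argument shows that continuity of $h=\trace U$ forces \emph{locally uniform} convergence $U_{v,r}'(0)\to U(v)$ (monotonicity in $r$ plus continuity of the limiting trace give uniform convergence of traces; positivity of $U_{v,r}'(0)-U(v)$ then converts trace convergence into operator-norm convergence), and then Theorem 1(i) of \cite{Es}, which turns exactly this locally uniform convergence into $C^2$ regularity of Busemann functions; continuity of $U$ falls out at the end as a uniform limit of continuous maps. You instead reconstruct both ingredients from scratch, and in the opposite order: your ``trace squeeze'' is a correct pointwise analogue of Zimmer's argument (the elementary fact about finitely many sequences with dominated limsups and convergent sum does the job, and your lower bound via local upper bounds plus the trace is sound), and your third paragraph is essentially a direct proof of Eschenburg's theorem by approximating $b_v$ by $f_t=d(c_v(t),\cdot)-t$ and applying the calculus lemma on $C^2$ limits. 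The paper's route is shorter; yours is self-contained and exposes the mechanism.

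One step is stated too loosely to stand as written. You cannot apply Dini's theorem directly to the family $t\mapsto \Hess f_t(x)$: writing $\Hess f_t(x)$ as a finite-radius tensor $U'_{w_t(x),\,r_t(x)}(0)$ with $w_t(x)=\grad f_t(x)$ and $r_t(x)=d(c_v(t),x)$, \emph{both} the radius and the direction vary with $t$, and the monotonicity recalled in the text holds only in $r$ for a \emph{fixed} direction; for $x$ off the geodesic $c_v$ the family is not monotone in $t$ (indeed $f_t(x)<f_s(x)$ strictly for $t>s$ there, so no touching argument rescues it). The repair uses precisely the ingredients you name, but in two stages: first apply Dini in the direction variable to get $\sup_{w\in K'}\Vert U'_{w,r}(0)-U(w)\Vert\le \sup_{w\in K'}\trace\left( U'_{w,r}(0)-U(w)\right)\to 0$ as $r\to\infty$ on a compact neighbourhood $K'$ of $\{\grad b_v(x):x\in K\}$, and then split $\Vert \Hess f_t(x)-\Hess b_v(x)\Vert$ by the triangle inequality into this uniform term plus $\Vert U(w_t(x))-U(\grad b_v(x))\Vert$, which is killed by continuity of $U$ together with the locally uniform convergence $\grad f_t\to\grad b_v$. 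That last convergence also deserves justification rather than assertion; it follows from pointwise convergence plus local equicontinuity of the gradients, which the two-sided bounds $S'_{v,1}(0)\le U'_{v,r}(0)\le U'_{v,1}(0)$ for $r\ge 1$ provide. (A cosmetic point: spheres centered ahead at $c_v(t)$ have second fundamental form $-S'_{\cdot,r}(0)=U'_{-\cdot,r}(0)$ in the paper's notation, so the limiting Hessian is $-S(v_x)=U(-v_x)$ rather than $U(v_x)$; you are following the paper's own stated identity, and since all conclusions are symmetric under $v\mapsto -v$ this changes nothing.)
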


\begin{proof}
The proof is a combination of \cite[Theorem 1(i)]{Es} and the arguments given in the proof of Lemma 17 in \cite{Zi-14}. The proof in \cite{Zi-14} shows that continuity of $v \mapsto h(v) = \trace U(v)$  implies local uniform convergence (in $v \in SX$) of $U_{v,r} \to U(v)$ as $r \to \infty$. Therefore, Theorem 1(i) in \cite{Es} implies that all Busemann functions and associated horospheres are $C^2$. Moreover, the local uniform convergence and continuity of $v \to U_{v,r}$ for all $r > 0$ imply that $v \to U(v)$ is also continuous. 
\end{proof}

\begin{remark} \label{rem:continasympt} A simply connected Riemannian manifold $(X,g)$ without conjugate points for which the map $v \mapsto S(v)$ is continuous is called a \emph{manifold with continuous asymptote} (see \cite{Es}). In this case, the maps $v \mapsto U(v)$ and $h(v) = \trace U(v)$ are also continuous, since $U(v) = - S(-v)$. Manifolds with continuous asymptote contain the smaller class of \emph{manifolds with bounded asymptote}, that is, manifolds satisfying $\Vert S_v(t) \Vert \le C$ for all $v \in SX$ and $t \ge 0$ (see \cite[Satz 3.5]{KnDiss}). All manifolds of non-positive curvature or, more generally, without focal points, belong to the class of manifolds with bounded asymptote with constant $C=1$ (see \cite[Section 5]{Es}).
\end{remark}

Note that $h(v) = - \trace(S(-v))$ and, in the case $b_v \in C^2$, we have $h(v) = \Delta b_v(\pi(v))$. 
This identity confirms that Definition \ref{def:horomeancurvfunc} of the horospherical mean curvature function generalizes our original definition \eqref{eq:hfirst} of the function $h$ in the introduction. Moreover, since $D(v) = U(v) - S(v) \ge 0$, we have
$$ h(v) + h(-v) = \trace D(v)  \ge 0. $$
This implied that $h \ge 0$ for manifolds with invariant horospherical mean curvature functions.

Next, we introduce the rank of a manifold without conjugate points.

\begin{definition} Let $(M,g)$ be a Riemannian manifold without conjugate points. The \emph{rank of a unit vector $v \in SM$} is defined as
$$ \rank(v) = 1 + \dim \ker D(v), $$
and the \emph{rank of $M$} is defined as
$$ \rank(M) = \min_{v \in SM} \rank(v). $$
\end{definition}

This definition of the rank for manifolds without conjugate point was introduced in \cite[Definition 3.1]{K}. It extends the earlier definition for Hadamard manifolds in \cite{BBE-85}, where $\rank(v)$ was defined as the dimension of parallel Jacobi fields along the geodesic $c_v$. The rank has the following invariance properties, which implies that there is a well-defined rank notion for geodesics, namely, the rank of any of its unit tangent vectors. 

\begin{proposition} Let $(M,g)$ be a Riemannian manifold without conjugate points. Then the rank
$$ \rank: SM \to \mathbb{N} $$
is invariant under the geodesic flow, that is,
$$ \rank(\phi^t v) = \rank(v) \quad \text{for all $v \in SM$ and all $t \in \R$,} $$
and \emph{reversible}, that is,
$$ \rank(v) = \rank(-v) \quad \text{for all $v \in SM$}.$$
\end{proposition}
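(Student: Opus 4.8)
The plan is to reduce both invariance properties to linear-algebra statements about the endomorphism $D(v) = U(v) - S(v)$, using the constancy of the Wronskian. Reversibility is essentially immediate. Since $(-v)^\bot = v^\bot$, the endomorphisms $D(v)$ and $D(-v)$ act on the same space. Applying the identity $U(w) = -S(-w)$, valid for every unit vector $w$, both to $w = v$ and to $w = -v$ gives $S(-v) = -U(v)$ and $U(-v) = -S(v)$, whence
$$ D(-v) = U(-v) - S(-v) = -S(v) + U(v) = D(v). $$
Consequently $\ker D(-v) = \ker D(v)$, and therefore $\rank(-v) = 1 + \dim\ker D(-v) = 1 + \dim\ker D(v) = \rank(v)$.

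For invariance under the geodesic flow, the key observation is that $D(v)$ is itself a Wronskian. Evaluating $\Omega(U_v, S_v)(t) = S_v^*(t) U_v'(t) - (S_v'(t))^* U_v(t)$ at $t = 0$ and using $S_v(0) = U_v(0) = \Id_{v^\bot}$ together with the symmetry of $S(v) = S_v'(0)$, I obtain $\Omega(U_v, S_v)(0) = U(v) - S(v) = D(v)$. Since $U_v$ and $S_v$ are Jacobi tensors, \eqref{eq:Wronski} shows $\Omega(U_v, S_v)(t) \equiv D(v)$ for all $t$. Next I would substitute the flow relation \eqref{eq:Srel}, namely $S_{\phi^{t}v}(s) = S_v(s+t)S_v^{-1}(t)$, and its exact analogue $U_{\phi^{t}v}(s) = U_v(s+t)U_v^{-1}(t)$ (which holds by the same uniqueness argument: both sides are Jacobi tensors agreeing in value $\Id$ and first derivative $U(\phi^t v)$ at $s=0$, the latter by \eqref{eq:U'rel}) into $\Omega(U_{\phi^t v}, S_{\phi^t v})(s)$. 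The factors $S_v^{-1}(t)$ and $U_v^{-1}(t)$ are constant in $s$, so they pull out of the covariant derivatives, and the inner bracket collapses to $\Omega(U_v, S_v)(s+t) = D(v)$, yielding
$$ D(\phi^t v) = \bigl(S_v^{-1}(t)\bigr)^* \, D(v) \, U_v^{-1}(t). $$

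Because $S_v(t)$ and $U_v(t)$ are invertible in the absence of conjugate points, both $\bigl(S_v^{-1}(t)\bigr)^*$ and $U_v^{-1}(t)$ are isomorphisms; pre- and post-composition with isomorphisms preserves the dimension of the kernel, so $\dim\ker D(\phi^t v) = \dim\ker D(v)$ and hence $\rank(\phi^t v) = \rank(v)$. The one step requiring care—and which I expect to be the main obstacle—is the bookkeeping of the parallel-transport identifications implicit in the products $S_v(s+t)S_v^{-1}(t)$ and $U_v(s+t)U_v^{-1}(t)$, since the various tensors a priori live in $\End(\phi^s(v)^\bot)$ for differing base points. Once one fixes the convention under which \eqref{eq:Srel} is stated, the analogous identity for $U$ and the factoring of the Wronskian go through verbatim, and the dimension count is unaffected.
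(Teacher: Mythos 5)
Your proposal is correct and follows essentially the same route as the paper: reversibility via $D(-v)=D(v)$ using $U(w)=-S(-w)$, and flow invariance by recognizing $D(v)$ as the constant Wronskian $\Omega(U_v,S_v)$ and using the flow relations \eqref{eq:Srel}, \eqref{eq:S'rel}, \eqref{eq:U'rel} to exhibit $D(\phi^t v)$ as $D(v)$ pre- and post-composed with isomorphisms, which preserves the kernel dimension. The only difference is organizational: the paper substitutes \eqref{eq:S'rel} and \eqref{eq:U'rel} directly into $U(\phi^t v)^*-S(\phi^t v)$ and factors, whereas you package the same computation as the conjugation identity $D(\phi^t v)=\bigl(S_v^{-1}(t)\bigr)^* D(v)\, U_v^{-1}(t)$.
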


\begin{proof}
  It follows from \eqref{eq:Wronski} that
  $$ D(v) = U(v) - S(v)^* = \Omega(U_v,S_v)(0) = \Omega(U_v,S_v)(t), $$
  and we have
  \begin{align*}
  \ker D(\phi^t v) &= \ker \left( U(\phi^t v)^* - S(\phi^t v)\right) \\
  &= 
  \ker \left( \left( U_v^{-1}(t) \right)^* \left[ (U_v'(t))^* S_v(t) - U_v(t)^* S_v'(t) \right] S_v^{-1}(t)\right) \quad \text{(by \eqref{eq:S'rel} and \eqref{eq:U'rel})}\\
  &= \ker \left( U_v(t)^* S_v'(t)-U_v'(t))^* S_v(t) \right) \\
  &= \ker \left( \Omega(U_v,S_v)(t)^* \right) = \ker D^*(v) = \ker D(v).
  \end{align*}
  This shows invariance of the rank under the geodesic flow. Reversibility follows from
  \begin{equation} \label{eq:Drev} 
  D(-v) = U(-v)-S(-v) = - S(v)-(-U(v)) = U(v)-S(v) = D(v). 
  \end{equation}
\end{proof}

The following proposition implies that an interesting classification of manifolds with invariant horospherical mean curvature functions starts with dimension $3$ (see Conjecture \ref{conj:3dimclass} at the end of this paper).

\begin{proposition} Let $(M,g)$ be a $2$-dimensional manifold with invariant horospherical mean curvature function. Then $M$ has non-positive constant curvature.
\end{proposition}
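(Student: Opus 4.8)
The plan is to exploit the fact that in dimension $2$ the normal space $v^\bot$ is one-dimensional, so that all the tensorial objects above reduce to scalars. First I would fix $v \in SM$ and note that, along $c_v$, the orthogonal Jacobi tensors and hence the endomorphisms $S(\phi^t v), U(\phi^t v) \in \End((\phi^t v)^\bot)$ act on a one-dimensional space and may be identified with real numbers. Under this identification one has $h(v) = \trace U(v) = U(v)$, and the Jacobi operator $R_v(t) = R(\cdot,\phi^t v)\phi^t v$ becomes multiplication by the Gauss curvature, i.e. $R_v(t) = K(c_v(t))$.

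Next I would bring in the invariance under the geodesic flow. Property (iii) of Definition \ref{def:manwithinvhoro} gives $U(\phi^t v) = h(\phi^t v) = h(v) = U(v)$ for all $t \in \R$, so the scalar function $u(t) := U(\phi^t v)$ is constant and $u'(t) = 0$. Substituting this into the Riccati equation \eqref{eq:Ricatti}, which in the one-dimensional setting is the scalar relation
$$ u'(t) + u(t)^2 + K(c_v(t)) = 0, $$
the derivative term drops and I obtain
$$ K(c_v(t)) = -U(v)^2 = -h(v)^2 \qquad \text{for all } t \in \R. $$
Hence the Gauss curvature is constant along every geodesic $c_v$, with value $-h(v)^2 \le 0$; in particular $K \le 0$ everywhere.

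Finally I would globalize. The previous step shows that $K$ is constant along each geodesic. Since $M$ has no conjugate points, every exponential map $\exp_p$ is a local diffeomorphism, so each point $q$ in a normal neighborhood of $p$ lies on a geodesic emanating from $p$ and therefore satisfies $K(q) = K(p)$; thus $K$ is locally constant. As $M$ is connected, $K$ is a non-positive constant, which is the assertion. I expect the only delicate points to be the scalar reduction together with the correct sign in the Riccati equation (making sure one reads off $K = -h(v)^2$ rather than a spurious expression), and the passage from \emph{constant along every geodesic} to \emph{globally constant}; it is worth remarking that for this two-dimensional statement only the flow-invariance (iii) and the absence of conjugate points are actually used, while reversibility (ii) and continuity (i) of $h$ play no role.
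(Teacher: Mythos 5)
Your proof is correct and takes essentially the same route as the paper: in dimension $2$ the Riccati equation becomes the scalar identity $u' + u^2 + K = 0$, flow-invariance of $h$ kills the derivative term, and one reads off $K(c_v(t)) = -h(v)^2 \le 0$ along every geodesic. The only (immaterial) difference is the globalization step --- the paper invokes completeness so that any two points of $M$ are joined by a geodesic, while you pass through local constancy on normal neighborhoods plus connectedness --- and your closing remark that only flow-invariance and the absence of conjugate points are used is accurate (indeed the paper's phrase ``$h$ is a function on $M$'' tacitly uses $h \ge 0$, hence reversibility, which your formulation in terms of $K$ rather than $h$ avoids).
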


\begin{proof} The Riccati equation simplifies to the scalar identity along any unit speed geodesic $c: \R \to M$:
$$ u'(t) + u^2(t) + K(c(t)) = 0, $$
where $K(p)$ is the Gaussian curvature of $M$ at $p$. Since $u(t) = h(\dot c(t))$, invariance of $h$ under the geodesic flow implies
$$ -h^2(v) = K(\pi(v)) \quad \text{for all $v \in SM$.} $$
Therefore, $h$ is a function on $M$, and since any two points $p,q \in M$ can be connected by a geodesic, we conclude that $h$ is constant. Consequently, $M$ has also constant Gaussian curvature.
\end{proof}


\subsection{Riemannian products of manifolds with invariant horospherical mean curvature functions}
\label{subsec:riemprod}

In this subsection, we prove Theorem \ref{thm:riemprod} from the introduction. Our first result relates Busemann functions of Riemannian products with the Busemann functions of the factors.

\begin{proposition} \label{prop:riemprodbus}
Let $(X_i,g_i)$, $i=1,2$, be two simply connected Riemannian manifold and $(X=X_1 \times X_2,g)$ be their Riemannian product. Then $X$ has no conjugate points if and only if both $X_1$ and $X_2$ don't have conjugate points. In this case, we have the following relation between the Busemann functions for any unit vector $\tilde v = (\alpha v,\beta w) \in S_{(p,q)}X \subset T_p X_1 \oplus T_q X_2$ with $v \in S_p X_1$, $w \in S_q X_2$ and $\alpha^2+\beta^2=1$:
$$ b_{\tilde v}(x,y) = \alpha b_v(x) + \beta b_w(y). $$
In particular, the Busemann functions on $X$ are $C^2$ if and only if the Busemann functions on both factors are $C^2$, and we have
\begin{equation} \label{eq:buseprod} 
\Delta_X b_{\tilde v}(x,y) = \alpha \Delta_{X_1} b_v(x) + \beta \Delta_{X_2} b_w(y). 
\end{equation}
\end{proposition}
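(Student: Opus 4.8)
The plan is to prove the three assertions in order: the equivalence of ``no conjugate points,'' the Busemann function formula, and then the consequences about $C^2$-regularity and the Laplacian.

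First I would treat the no-conjugate-points equivalence. The key structural fact is that geodesics in a Riemannian product split: a geodesic $\tilde c(t) = (c_1(t), c_2(t))$ has constant-speed geodesic projections $c_1, c_2$ onto the factors, and a Jacobi field $\tilde J$ along $\tilde c$ decomposes as $\tilde J = (J_1, J_2)$ where each $J_i$ is a Jacobi field along $c_i$ (this is because the curvature tensor of a product vanishes on mixed pairs, so the Jacobi equation decouples). A conjugate point on $\tilde c$ corresponds to a nonzero $\tilde J$ vanishing at two parameters; by the decomposition this forces a nonzero $J_i$ vanishing at the corresponding two parameters on one of the factors (after reparametrizing by the constant speed $\alpha$ or $\beta$). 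Conversely, a conjugate pair on a factor lifts to one on $X$. So $X$ has conjugate points iff one of the factors does, which is the contrapositive of the claim.

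Next I would establish the Busemann formula, which is the heart of the proposition. The distance in a product satisfies $d_X((p_1,p_2),(x,y))^2 = d_{X_1}(p_1,x)^2 + d_{X_2}(p_2,y)^2$. For the unit vector $\tilde v = (\alpha v, \beta w)$ the geodesic is $c_{\tilde v}(t) = (c_v(\alpha t), c_w(\beta t))$. Plugging into the definition $b_{\tilde v}(x,y) = \lim_{t\to\infty} d_X(c_{\tilde v}(t),(x,y)) - t$ and using the Pythagorean distance, I would expand
\[
d_X(c_{\tilde v}(t),(x,y)) = \sqrt{d_{X_1}(c_v(\alpha t),x)^2 + d_{X_2}(c_w(\beta t),y)^2}.
\]
Writing $d_{X_1}(c_v(\alpha t),x) = \alpha t + b_v(x) + o(1)$ and $d_{X_2}(c_w(\beta t),y) = \beta t + b_w(y) + o(1)$ from the definition of the factor Busemann functions, a first-order Taylor expansion of the square root about the leading term $\sqrt{(\alpha t)^2 + (\beta t)^2} = t$ (using $\alpha^2 + \beta^2 = 1$) yields $t + \alpha b_v(x) + \beta b_w(y) + o(1)$, so subtracting $t$ and taking the limit gives the claimed additive formula. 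I expect this asymptotic expansion to be the main obstacle: one must justify the interchange of limits and control the $o(1)$ error terms uniformly enough that the square-root linearization is valid, which requires the convergence defining $b_v$ and $b_w$ to be good enough (locally uniform) to survive combination under the square root.

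Finally, the $C^2$-regularity and Laplacian statements follow formally from the additive formula. Since $b_{\tilde v}(x,y) = \alpha b_v(x) + \beta b_w(y)$ is a sum of a function of $x$ alone and a function of $y$ alone, it is $C^2$ on $X$ if and only if each summand is $C^2$ on its factor (as $\alpha, \beta$ are fixed constants and at least one is nonzero; the degenerate cases $\alpha = 0$ or $\beta = 0$ reduce to a single factor and are handled directly). For the Laplacian I would invoke the fact that the Laplace--Beltrami operator of a Riemannian product splits as $\Delta_X = \Delta_{X_1} + \Delta_{X_2}$ acting on the respective variables; applying this to the additive formula and noting $\Delta_{X_2}(\alpha b_v(x)) = 0$ and $\Delta_{X_1}(\beta b_w(y)) = 0$ immediately gives \eqref{eq:buseprod}. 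This last part is routine once the additive formula is in hand.
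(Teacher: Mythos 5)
Your proposal is correct and takes essentially the same route as the paper: the Pythagorean splitting of the product distance, a Taylor expansion of the square root around the leading term $t$, and then $\Delta_X = \Delta_{X_1} + \Delta_{X_2}$ for the final identity (the paper does not even write out the conjugate-point equivalence, treating the Jacobi-field splitting as standard). One remark: the ``main obstacle'' you anticipate is not actually there --- for fixed $(x,y)$ everything is a single limit in $t$, so no interchange of limits or local uniformity is needed; the paper makes this transparent by writing $d(c_v(\alpha t),x) = \alpha t + c_1(t)$ with only the triangle-inequality bound $|c_1(t)| \le d(x,p)$, and boundedness alone pushes the square-root expansion through, which also covers the degenerate cases $\alpha = 0$ or $\beta = 0$ seamlessly (where your ansatz $d_{X_1}(c_v(\alpha t),x) = \alpha t + b_v(x) + o(1)$ literally fails, since the left side is then the constant $d(p,x)$ rather than $b_v(x)+o(1)$).
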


\begin{proof}
    Without loss of generality, we can assume $\alpha,\beta \ge 0$. Let $\tilde v = (\alpha v, \beta w)$ with $v \in S_p X_1$ and $w \in S_q X_2$. Then we have
    \begin{multline*}
    b_{\tilde v}(x,y) - \alpha b_v(x) -\beta b_w(y) = \lim_{t \to \infty} \left(d(c_{\tilde v}(t),(x,y)) -t \right) - \alpha b_v(x) -\beta b_w(y) \\ = \lim_{t \to \infty} \left(\sqrt{d^2(c_v(\alpha t),x) + d^2(c_w(\beta t),y)} -t \right) - \alpha \lim_{t \to \infty} \left( d(c_v(\alpha t),x) - \alpha t \right) - \beta \lim_{t \to \infty} \left( d(c_w(\beta t),y) - \beta t \right) \\
    = \lim_{t \to \infty} \left( \sqrt{d^2(c_v(\alpha t),x) + d^2(c_w(\beta t),y)} - \alpha d(c_v(\alpha t),x) - \beta d(c_w(\beta t),y) \right).
    \end{multline*}
    We can write
    \begin{align*}
    d(c_v(\alpha t),x) &= \alpha t + c_1(t), \\
    d(c_w(\beta t),y) &= \beta t + c_2(t)
    \end{align*}
    with functions $c_1, c_2: \mathbb{R} \to \mathbb{R}$ satisfying
    $$ | c_1(t) | \le d(x,p) \quad \text{and} \quad |c_2(t) | \le d(y,q). $$
    This leads to
    \begin{multline*}
    b_{\tilde v}(x,y) - \alpha b_v(x) -\beta b_w(y) = \lim_{t \to \infty} \left( \sqrt{(\alpha t + c_1(t))^2+(\beta t + c_2(t))^2} - \alpha(\alpha t + c_1(t)) - \beta (\beta t + c_2(t)) \right) \\
    = \lim_{t \to \infty} \left( t \sqrt{1+ \frac{2}{t} \left( \alpha c_1(t) + \beta c_2(t) \right) + \frac{1}{t^2}\left( c_1(t)^2 + c_2(t)^2 \right)} - t - \left(\alpha c_1(t) + \beta c_2(t) \right)\right) \\
    = \lim_{t \to \infty} \left( t \left( 1 + \frac{1}{t} \left( \alpha c_1(t) + \beta c_2(t) \right) + \frac{1}{2t^2} \left( c_1(t)^2 + c_2(t)^2 \right) + O\left( \frac{1}{t^2} \right) \right) - t - \left(\alpha c_1(t) + \beta c_2(t) \right) \right) \\ = \lim_{t \to \infty} \left( \frac{1}{2t} \left( c_1(t)^2 + c_2(t)^2 \right) + O\left( \frac{1}{t}\right) \right) = 0.
    \end{multline*}
    The final relation follows from 
    $\Delta_X = \Delta_{X_1} \otimes \Id_{X_2} + \Id_{X_1} \otimes \Delta_{X_2}$.
\end{proof}

With this result at hand, we can now provide the proof of Theorem \ref{thm:riemprod}.

\begin{proof}[Proof of Theorem \ref{thm:riemprod}] Since all relevant concepts are invariant under isometries, we only need to prove the theorem for simply connected Riemannian manifolds $X = X_1 \times X_2$. The $C^2$-statement about the Busemann functions in Proposition \ref{prop:riemprodbus} implies that $h_X$ is continuous if and only if $h_{X_1}$ and $h_{X_2}$ are continuous. Moreover, \eqref{eq:buseprod} for unit vectors $\tilde v \in SX, v \in SX_1, w \in SX_2$ with $\tilde v = (\alpha v,\beta w)$ can be rewritten as 
$$ h_X(\tilde v) = \alpha h_{X_1}(v) + \beta h_{X_2}(w), $$
which show that $h_X$ is reversible and invariant under the geodesic flow if and only if both $h_{X_1}$ and $h_{X_2}$ have these properties, since the geodesic flows on these spaces are related as follows:
$$ \phi^t_X(\alpha v,\beta w) = (\alpha \phi^{\alpha t}_{X_1} (v), \beta \phi^{\beta t}_{X_2}(w)). $$
\end{proof}



Similar considerations in the special setting of asymptotically harmonic manifolds were carried out in \cite{Zi-12}, based on stable Jacobi tensors instead of Busemann functions. In contrast to manifolds with invariant horospherical mean curvature functions, the class of asymptotically harmonic manifolds is generally not closed under taking Riemannian products. For an asymptotically harmonic manifold to be a non-trivial Riemannian product, its horospheres need to be minimal:

\begin{proposition}[see {\cite[Lemmas 36 and 37]{Zi-12}}] Let $M = M_1 \times M_2$ be a non-trivial Riemannian product of two manifolds without conjugate points. If $M$
is asymptotically harmonic, then both $M_1$ and $M_2$ are also asymptotically harmonic and all horospheres in $M,M_1,M_2$ are minimal horospheres. If, additionally, $M$ has no focal points, then $M$ is flat.
\end{proposition}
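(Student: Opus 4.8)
The plan is to prove the final Proposition, which asserts that an asymptotically harmonic nontrivial Riemannian product $M = M_1 \times M_2$ forces both factors to be asymptotically harmonic with minimal horospheres, and flatness under the additional no-focal-points hypothesis. I will work on the universal cover $X = X_1 \times X_2$, where Theorem \ref{thm:riemprod} and Proposition \ref{prop:riemprodbus} are directly available. The key formula is the additive decomposition of the horospherical mean curvature function from the proof of Theorem \ref{thm:riemprod}, namely
\begin{equation} \label{eq:hdecomp}
h_X(\alpha v, \beta w) = \alpha\, h_{X_1}(v) + \beta\, h_{X_2}(w),
\end{equation}
valid for all unit vectors $v \in SX_1$, $w \in SX_2$ and all $\alpha, \beta \ge 0$ with $\alpha^2 + \beta^2 = 1$.

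First I would use that asymptotic harmonicity of $X$ means $h_X \equiv c$ is a constant. Plugging the pure directions $(\alpha,\beta)=(1,0)$ and $(0,1)$ into \eqref{eq:hdecomp} gives $h_{X_1}(v) = c$ and $h_{X_2}(w) = c$ for all $v, w$, so both factors are asymptotically harmonic, each with the same constant $c$. Next, substituting these back into \eqref{eq:hdecomp} yields $c = \alpha c + \beta c = (\alpha+\beta)c$ for every admissible $(\alpha,\beta)$. Choosing, say, $\alpha = \beta = 1/\sqrt{2}$ gives $c = \sqrt{2}\,c$, forcing $c = 0$. Hence all three spaces have identically vanishing horospherical mean curvature, i.e.\ their horospheres are minimal; this descends to the quotients $M, M_1, M_2$ since $h$ is defined there as well.

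For the final clause, I would add the no-focal-points assumption, which places $X$ in the class of manifolds with bounded asymptote (constant $C=1$) described in Remark \ref{rem:continasympt}, and where the second fundamental form $U(v)$ of each horosphere is well-defined and, crucially, nonnegative for spaces without focal points. Under no focal points one has $U(v) \ge 0$ as a symmetric endomorphism, so the vanishing of its trace $h(v) = \trace U(v) = 0$ forces $U(v) \equiv 0$ for every $v$. Thus every horosphere is totally geodesic and the stable and unstable Jacobi tensors satisfy $S(v) = U(v) = 0$. Feeding $U \equiv 0$ into the Riccati equation \eqref{eq:Ricatti} gives $R_v(t) U(\phi^t v) = -U'(\phi^t v) - U^2(\phi^t v) = 0$ trivially, so I would instead read off flatness from the Jacobi tensor itself: with $U_v'(0) = 0$ and $U_v(0) = \Id$ one gets the constant solution $U_v(t) \equiv \Id$, whence $U_v''(t) = -R_v(t) = 0$, so the Jacobi operator vanishes along every geodesic and $X$ is flat.

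The main obstacle I anticipate is the flatness step, specifically justifying that no focal points yields $U(v)\ge 0$ and that a totally geodesic horosphere foliation propagates to vanishing curvature. The sign $U(v) \ge 0$ is standard for manifolds without focal points (the unstable second fundamental form is nonnegative), and once $U_v \equiv \Id$ along every geodesic in every direction, the identity $U_v'' + R_v U_v = 0$ with $U_v'' = 0$ immediately gives $R_v \equiv 0$; since this holds for all $v \in SX$, the full curvature tensor vanishes. The only care needed is to ensure we are genuinely invoking the no-focal-points sign condition (rather than merely bounded asymptote) to pass from $\trace U = 0$ to $U = 0$, which is exactly where the extra hypothesis is used.
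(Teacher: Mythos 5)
The paper gives no proof of this proposition: it imports the statement from Zimmer's Lemmas 36 and 37, which (as the paper notes immediately above the statement) are based on stable Jacobi tensors. Your route to the first two assertions is genuinely different and is correct: you exploit the paper's own product formula $h_X(\alpha v,\beta w)=\alpha h_{X_1}(v)+\beta h_{X_2}(w)$ from the proof of Theorem \ref{thm:riemprod}, read off $h_{X_1}\equiv h_{X_2}\equiv c$ from the pure directions, and then force $c=\sqrt{2}\,c=0$ from the diagonal direction. Two points should be made explicit: (i) to use \eqref{eq:buseprod} you need $C^2$ Busemann functions, which is supplied by Lemma \ref{lem:contasymp} since $h_X$, being constant, is continuous; (ii) the formula in the degenerate case $\beta=0$ (arbitrary $w$, with the $\beta$-term absent) needs a one-line verification, since Proposition \ref{prop:riemprodbus} is phrased for a decomposition $\tilde v=(\alpha v,\beta w)$ with unit $v,w$ — it does hold, as the geodesic $c_{(v,0)}$ is constant in the second factor. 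Compared with Zimmer's argument, his needs no Busemann regularity, while yours is shorter and self-contained given Section 2 of the paper.

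The flatness step, however, contains a circular deduction: from ``$U_v'(0)=0$ and $U_v(0)=\Id$ one gets the constant solution $U_v(t)\equiv\Id$'' is not a valid inference, because the constant tensor $\Id$ solves the Jacobi equation $U_v''+R_vU_v=0$ if and only if $R_v\equiv 0$, which is exactly what you are trying to prove; initial data at $t=0$ alone cannot determine the solution independently of $R$. The repair is immediate from what you already established: since $U(w)=0$ for \emph{every} $w\in SX$, relation \eqref{eq:U'rel} gives $U_v'(t)=U(\phi^t v)\,U_v(t)=0$ for all $t$, so $U_v$ is parallel, hence $U_v\equiv\Id$, and then the Jacobi equation yields $R_v(t)=-U_v''(t)U_v^{-1}(t)=0$. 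Equivalently, the Riccati equation in its correct form, $\frac{d}{dt}U(\phi^t v)+U(\phi^t v)^2+R_v(t)=0$, gives $R_v\equiv 0$ at once from $U\equiv 0$; your substitution produced ``$0=0$ trivially'' only because \eqref{eq:Ricatti} as printed carries a typo (the last term should be $R_v(t)$, not $R_v(t)S(\phi^t(v))$), and this typo is what pushed you into the flawed detour. Finally, your appeal to $U(v)\ge 0$ for manifolds without focal points is standard, and can even be derived from the paper's Remark \ref{rem:continasympt}: bounded asymptote with $C=1$ gives $\Vert S_v(t)x\Vert\le\Vert x\Vert=\Vert S_v(0)x\Vert$ for $t\ge 0$, hence $\langle S(v)x,x\rangle\le 0$, so $S(v)\le 0$ and $U(v)=-S(-v)\ge 0$.
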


\section{Rank one manifolds with invariant horospherical mean curvature functions}
\label{sec:rankone}

This section is devoted to the proof of Theorem \ref{thm:main1} of the introduction. Since all concepts are invariant under isometries, it suffices to prove the theorem for simply connected rank one manifolds $(X,g)$ with invariant horospherical mean curvature functions. 

Recall that, for every $v \in SX$, $D(v) = U(v) - S(v)$ is a non-negative symmetric endomorphism on $v^\bot$ with $\trace D(v) =2h(v)$ and $\det D(v) \ge 0$. We consider, for $\alpha >0$, the open subset
$$ SX_\alpha = \{ v \in SX: \det D(v) > \alpha \}. $$
In the rank one case, this subset is non-empty and an open submanifold of $SX$ of the same dimension $n$, for small enough $\alpha >0$. Since (see \cite[Lemma I.2.17]{K0}),
\begin{equation} \label{eq:Dwbound} 
\Vert D(v) \Vert \le \Vert S'_v(0) \Vert + \Vert U'_v(0) \Vert \le 2 \sqrt{R_0} \quad \text{for all $v \in SX$}, 
\end{equation}
the largest eigenvalue of $D(v)$ is bounded above by $2 \sqrt{R_0}$. This implies that, for all $v \in SX_\alpha$, the smallest eigenvalue of the positive definite $D(\phi^t(v))$ is bounded below by
\begin{equation} \label{eq:alpharho} 
\rho = \alpha/(2 \sqrt{R_0})^{n-2}, 
\end{equation}
where $n = \dim(X)$.
In other words, we have
\begin{equation} \label{eq:Dlowbod} 
D(v) \ge \rho \cdot \Id \quad \text{for all $v \in SX_\alpha$,} 
\end{equation}
with $\alpha$ and $\rho$ related by \eqref{eq:alpharho}.



The proof of Theorem \ref{thm:main1} for simply connected rank one manifolds $(X,g)$ with invariant horospherical mean curvature functions and curvature bounds $\Vert R \Vert \le R_0$ and $\Vert \nabla R \Vert \le R_0'$ for some $R_0, R_0' >0$
consists of the following four steps:
\begin{description}
\item[Step 1] $\det D$ and $\trace D$ are both reversible and invariant under the geodesic flow.
\item[Step 2] $\det D$ and $\trace D$ are both constant along weak stable and unstable manifolds of rank one vectors.
\item[Step 3] For all $\alpha >0$ the map $(p,v) \mapsto -\grad b_v(p)$ is continuous on the set $X \times SX_\alpha$.
\item[Step 4] $\det D$ and $\trace D$ are both globally constant.
\end{description}
The fourth step implies that $h=\frac{1}{2}\trace D$ is constant and $M$ is therefore asymptotically harmonic, finishing the proof. The individual steps of this proof are given in the following subsections.


\begin{remark} For simply connected Riemannian manifolds $(X,g)$ with continuous asymptote, Eschenburg claims that the map $v \mapsto \grad b_v$ is continuous (see the proof of 
\cite[Theorem 1(ii)]{Es}). He argues that the vector field $\grad b_v$ is a solution of the following differential equation with continuous coefficients
$$ \nabla_w Y = U(Y(p)) w \quad \text{for all $w \in Y(p)^\bot$.} $$
While $\grad b_v$ is indeed a solution of this differential equation, we do not see how this yields the continuity of $v \mapsto \grad b_v$. Even the uniqueness of the solution is not clear to us, since a usual condition on the coefficients for such results is Lipschitz continuity. Therefore, we give an independent proof of this fact in our case in Step 3.
\end{remark}

\subsection{Step 1: $\det D$ and $\trace D$ are both reversible and invariant under the geodesic flow}
\label{subsec:detconstgeod}

In this section, we prove reversibility and invariance under the geodesic flow of $\det D$ and $\trace D$ for arbitrary manifolds $(M,g)$ with invariant horospherical mean curvature functions, independent of the rank and without any curvature bounds.

The reversibility of $\trace D$ and invariance of $\trace D$ under the geodesic flow follows readily from the corresponding properties of $h$. 

The reversibility of $\det D$ is a direct consequence of \eqref{eq:Drev}. Invariance under the geodesic flow is based on arguments which can be already found in the proof of Lemma 2.2 in \cite{HKSh-07} (see also proof of \cite[Proposition 3.2]{KP}). Introducing the operator
$$ H(v) = - \frac{1}{2} (S(v)+U(v)), $$
it follows from $HD+DH=S^2-U^2$ and the Riccati equation \eqref{eq:Ricatti} for the Jacobi tensors $S$ and $U$ that
$$ (HD+DH)(\phi^t v) = S(\phi^t v)^2 - U(\phi^t v)^2 = D'(\phi^t v). $$
We prove invariance of $\det D$ under the geodesic flow in the more general case without the rank one condition. 
In the case $\det D(\phi^t v) =0$ for all $t \in \R$, there is nothing to prove. In the case $\det D(\phi^{t_0} v) \neq 0$ for some $t_0 \in \R$, we conclude that
\begin{align*}
\det D(\phi^{t_0} v) \frac{d}{dt} \log \det D(\phi^{t_0} v) &= \trace \left[\left( \frac{d}{dt} D(\phi^{t_0} v\right) D^{-1}(\phi^{t_0} v) \right] \\
&= \trace\left[ H(\phi^{t_0} v) + (DHD^{-1})(\phi^{t_0} v) \right] = 2 \trace H(\phi^{t_0} v) \\
&= - \left( \trace S(\phi^{t_0}v) - \trace S(-\phi^{t_0}v)\right) \\
&= - \left(h(\phi^{t_0}v)-h(-\phi^{t_0}v)\right) = 0. 
\end{align*}
This implies that $\det D$ is constant along the geodesic flow.
\qed

\smallskip

The arguments in this subsection can also be used to prove the following result (cf. \cite[Corollary 2.6]{K} for a similar result in the context of harmonic manifolds).

\begin{proposition} Let $(M,g)$ be a manifold with invariant horospherical mean curvature function $h$. Then we have for all $v \in SX$,
\begin{equation} \label{eq:dettracerelation} 
\det D(v) \le \left( \frac{2h(v)}{n-1} \right)^{n-1}. 
\end{equation}
If \eqref{eq:dettracerelation} holds with equality for some $v \in SX$, we have for all $t \in \R$,
$$ R_{\phi^t(v)} = - \left( \frac{h(v)}{n-1} \right)^2 \Id_{\phi^t(v)^\bot}. $$
In particular, all sectional curvatures of planes $\Sigma$ containing $\phi^t(v)$ satisfy $K(\Sigma) = -\frac{h(v)^2}{(n-1)^2}$.
\end{proposition}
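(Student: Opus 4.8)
The plan is to prove the inequality \eqref{eq:dettracerelation} using the AM-GM inequality applied to the eigenvalues of the non-negative symmetric endomorphism $D(v)$, and then to analyze the equality case via the Riccati equation. First I would let $\lambda_1, \dots, \lambda_{n-1} \ge 0$ denote the eigenvalues of $D(v)$ acting on the $(n-1)$-dimensional space $v^\bot$. By definition we have $\trace D(v) = \sum_i \lambda_i = 2h(v)$ and $\det D(v) = \prod_i \lambda_i$. The arithmetic-geometric mean inequality immediately gives
$$ \det D(v) = \prod_{i=1}^{n-1} \lambda_i \le \left( \frac{\sum_{i=1}^{n-1} \lambda_i}{n-1} \right)^{n-1} = \left( \frac{2h(v)}{n-1} \right)^{n-1}, $$
which is exactly \eqref{eq:dettracerelation}. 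This handles the inequality with essentially no effort, since $h \ge 0$ was already established in the excerpt for manifolds with invariant horospherical mean curvature functions.

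Next I would treat the equality case. Equality in AM-GM forces all eigenvalues to coincide, so $D(v) = \mu \cdot \Id_{v^\bot}$ for the common value $\mu = \frac{2h(v)}{n-1}$. The key observation is that, by Step 1 in the excerpt, both $\det D$ and $\trace D = 2h$ are invariant under the geodesic flow; hence if equality holds at $v$, it holds at every $\phi^t(v)$, and the common eigenvalue $\mu$ is independent of $t$. Thus $D(\phi^t v) = \mu \cdot \Id$ for all $t$ with $\mu = \frac{2h(v)}{n-1}$ constant along the orbit. I would then feed this into the evolution equation $D'(\phi^t v) = (HD + DH)(\phi^t v)$ derived in Step 1: since $D$ is a constant multiple of the identity along the orbit, its derivative vanishes, giving $\mu(H(\phi^t v) + H(\phi^t v)) = 2\mu H(\phi^t v) = 0$, so $H(\phi^t v) = 0$, i.e. $S(\phi^t v) = -U(\phi^t v)$. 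Combined with $U - S = D = \mu\,\Id$ this yields $U(\phi^t v) = \frac{\mu}{2}\Id$ and $S(\phi^t v) = -\frac{\mu}{2}\Id$.

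Finally I would extract the curvature statement from the Riccati equation \eqref{eq:Ricatti}. Writing $s(t) = S(\phi^t v)$, we have $s(t) = -\frac{\mu}{2}\Id = -\frac{h(v)}{n-1}\Id$, which is constant in $t$, so $s'(t) = 0$. Substituting into $s' + s^2 + R_{\phi^t v} = 0$ gives
$$ R_{\phi^t v} = - s^2 = - \left( \frac{h(v)}{n-1} \right)^2 \Id_{\phi^t(v)^\bot} $$
for all $t \in \R$, as claimed. The sectional curvature assertion follows at once: for any plane $\Sigma$ spanned by $\phi^t(v)$ and a unit vector $w \in \phi^t(v)^\bot$, the Jacobi operator reads $K(\Sigma) = \langle R_{\phi^t v}(w), w\rangle = -\frac{h(v)^2}{(n-1)^2}$.

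The only delicate point is the bookkeeping in the equality case: one must be careful that the equality propagates along the whole orbit (which is guaranteed precisely by the flow-invariance of both $\det D$ and $\trace D$ from Step 1) and that $\mu$ is genuinely constant in $t$, so that $D' = 0$ is justified. Everything else is the routine AM-GM inequality and a direct substitution into the Riccati equation; I do not anticipate any serious obstacle beyond ensuring these invariances are invoked correctly.
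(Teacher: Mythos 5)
Your proposal is correct and takes essentially the same route as the paper: AM--GM for the inequality, flow-invariance of both $\det D$ and $\trace D$ to propagate the equality case along the orbit (you are in fact slightly more explicit than the paper, which only mentions invariance of $h$), the Step-1 identity $D'=HD+DH$ to force $S(\phi^t v)=-\frac{h(v)}{n-1}\Id$, and finally the Riccati equation to read off $R_{\phi^t v}=-S^2$. The only cosmetic difference is that the paper deduces $S^2=U^2$ and expands $U=S+\frac{2h(v)}{n-1}\Id$, whereas you cancel $\mu$ to get $H=0$ directly; both versions tacitly divide by $h(v)$ at that step, so your argument is no less complete than the paper's.
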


\begin{proof} Since $\trace D(v) = 2 h(v)$, \eqref{eq:dettracerelation} follows directly from the arithmetic-geometric mean inequality of the eigenvalues of $D(v)$. If this inequality holds with equality, all eigenvalues of $D(v)$ coincide and 
$$ D(v) = \frac{2h(v)}{n-1}  \Id_{v^\bot}. $$
Invariance of $h$ under the geodesic flow implies
$$ D(t) = D(\phi^t(v)) = \frac{2h(v)}{n-1} \Id_{\phi^t(v)^\bot}, $$
and therefore $D'(\phi^t(v)) = 0$. The above arguments show that $S^2(t) - U^2(t) = 0$. Since
$$ U(t) = S(t) + \frac{2h(v)}{n-1} \Id_{\phi^t(v)^\bot}, $$
we obtain
$$ U^2(t) = S^2(t) + \frac{4h(v)}{n-1} S(t) + \frac{4h(v)^2}{(n-1)^2} \Id_{\phi^t(v)}. $$
Since $U^2(t) = S^2(t)$, this leads to
$$ S(t) = - \frac{h(v)}{(n-1)} \Id_{\phi^t(v)} $$
and $S'(t) = 0$. Applying the Riccati equation, we conclude
$$ R_{\phi^t(v)} = - S^2(t) = - \frac{h(v)^2}{(n-1)^2} \Id_{\phi^t(v)}. $$
\end{proof}

\subsection{Step 2: $\det D$ and $\trace D$ are both constant along weak stable and unstable manifolds of rank one vectors}
\label{subsec:detconststable}

In this subsection, $(X,g)$ denotes a simply connected manifold with invariant horospherical mean curvature function $h: SX \to \R$ and satisfying $\Vert R \Vert \le R_0$ and $\Vert \nabla R \Vert \le R_0'$. Let $W^s(v)$ and $W^u(v)$ denote the stable and unstable leaf of $SX$ through $v \in SX$, respectively, that is,
\begin{align*}
W^s(v) &= \{ - \grad b_v(q): q \in b_v^{-1}(0) \}, \\
W^u(v) & = \{ \grad b_{-v}(q): q \in b_{-v}^{-1}(0) \}.
\end{align*}
We will also need weak stable and unstable leaves $W^{0s}(v), W^{0u}(v)$ through $v \in SX$, which are defined as follows:
\begin{align*}
W^{0s}(v) &= \{ - \grad b_v(q): q \in X \}, \\
W^{0u}(v) & = \{ \grad b_{-v}(q): q \in X \}.
\end{align*}
Our first aim is the proof of the following lemma.

\begin{lemma} \label{lem:localconst}
Assume $\rank(v) = 1$. Then there exists open neighbourhoods $V^s \subset W^s(v)$ and $V^u \subset W^u(v)$ of $v$ on which both $\det D$ and $\trace D$ are constant.
\end{lemma}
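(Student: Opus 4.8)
The plan is to exploit the invariance of $\det D$ and $\trace D$ under the geodesic flow (established in Step 1) together with the rank one hypothesis, and to show that these two functions are constant on small neighbourhoods within the strong stable and unstable leaves. The conceptual idea is that a rank one vector $v$ is a point where $D(v)$ is positive definite, hence where $\det D$ achieves a local positivity that forces rigidity through the dynamics of the geodesic flow. Let me describe the approach for $V^s \subset W^s(v)$; the unstable case is symmetric, obtained by reversing $v \mapsto -v$.

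First I would use that $\rank(v) = 1$ means $\ker D(v) = 0$, i.e. $D(v)$ is positive definite, so $\det D(v) > 0$. By continuity of $v \mapsto U(v)$ (Lemma \ref{lem:contasymp}), and since $S(v) = -U(-v)$ is continuous as well, the map $w \mapsto D(w)$ is continuous, so there is $\alpha > 0$ with $v \in SX_\alpha$, and $SX_\alpha$ is an open neighbourhood on which $D(w) \ge \rho \cdot \Id$ by \eqref{eq:Dlowbod}. The key dynamical input is that on $W^s(v)$ the geodesic flow is contracting: for $w \in W^s(v)$, the geodesics $c_v$ and $c_w$ are forward asymptotic, and $d(\phi^t w, \phi^t v) \to 0$ as $t \to +\infty$. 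Since $\det D$ and $\trace D$ are flow-invariant, their values at $w$ and at $v$ equal their values at $\phi^t w$ and $\phi^t v$ for all $t$; as $t \to +\infty$ these footpoints (and tangent vectors) converge together, so by continuity of $\det D$ and $\trace D$ one concludes the two functions agree at $w$ and at $v$. The neighbourhood $V^s$ is taken small enough that the asymptotic contraction applies uniformly, e.g. $V^s = W^s(v) \cap SX_\alpha$ near $v$.

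The step I expect to be the main obstacle is making the contraction argument rigorous, namely establishing that for $w$ in a suitable neighbourhood $V^s$ of $v$ inside the strong stable leaf, the distance $d(\phi^t w, \phi^t v)$ in $SX$ actually tends to $0$ as $t \to +\infty$, with the convergence strong enough to invoke continuity of $\det D$ and $\trace D$. In a general manifold without conjugate points one does not automatically have uniform exponential contraction along stable leaves (that is an Anosov property one is trying to prove later), so the delicate point is to extract just enough contraction from the rank one condition and the lower bound $D \ge \rho \Id$ on $SX_\alpha$. This is presumably where the curvature bounds $\Vert R \Vert \le R_0$ and $\Vert \nabla R \Vert \le R_0'$ enter: they control the Riccati dynamics and the stable Jacobi tensor $S_v(t)$, yielding decay of $\Vert S_v(t) \Vert$ along rank one orbits passing through $SX_\alpha$, which in turn forces the footpoints of $\phi^t w$ and $\phi^t v$ together. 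I would therefore organize the argument by first proving a quantitative contraction estimate along stable leaves of vectors whose forward orbit stays in $SX_\alpha$, and only then combine it with flow-invariance and continuity to deduce local constancy.
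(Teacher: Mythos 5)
Your setup matches the skeleton of the paper's proof: rank one gives $\det D(v)>0$, hence by continuity of $D$ (Lemma \ref{lem:contasymp}) a neighbourhood $V^s\subset W^s(v)\cap SX_\alpha$ with $D\ge\rho\cdot\Id$ along all orbits \eqref{eq:Dcrucial}; one then combines flow invariance of $\det D$ and $\trace D$ with contraction along the stable leaf, and handles $W^u(v)=-W^s(-v)$ by reversibility. The exponential contraction you anticipate is indeed available (Proposition \ref{prop:SLbeta-estimate} gives $\ell(\beta_t)\le a e^{-\rho t/2}\ell(\beta)$ for the footpoint curves). However, your concluding step contains a genuine gap: from $d(\phi^t w,\phi^t v)\to 0$ and \emph{mere continuity} of $\det D$ and $\trace D$ you cannot conclude $|\det D(\phi^t w)-\det D(\phi^t v)|\to 0$. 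The points $\phi^t v,\phi^t w$ leave every compact subset of $SX$, so continuity provides no uniform modulus of continuity along the orbits; a continuous function on a non-compact space may oscillate at finer and finer scales at infinity. This is not a removable technicality: the paper's later Theorem \ref{thm:metatheorem} makes exactly this point by requiring \emph{uniform} continuity on stable leaves as a hypothesis, and the entire content of Step 2 is to manufacture a substitute for that uniformity.

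The paper's substitute, which your proposal is missing, is a two-step uniform approximation by finite-time Jacobi tensors. First, Proposition \ref{prop:Sdiffar} (resting crucially on the lower bound \eqref{eq:Dcrucial} along the orbits and on the curvature bounds) gives
$$ 0 < S_{\phi^t(w)}'(0)-S_{\phi^t(w),r}'(0) \le \frac{a}{r}, \qquad 0 < U_{\phi^t(w),r}'(0)-U_{\phi^t(w)}'(0) \le \frac{a}{r}, $$
uniformly in $t\in\R$ and $w\in V^s$; so choosing $r$ large makes the infinite-time tensors $\delta/3$-close to the finite-time ones everywhere along both orbits. Second, Proposition \ref{prop:SLbeta-estimate} gives a Lipschitz estimate $\Vert S_{\phi^t(v'),r}'(0)-S_{\phi^t(v),r}'(0)\Vert \le C\,\ell(\beta_t)$ (and likewise for $U$) with $C$ uniform in $t$, which is where the exponential contraction of $\ell(\beta_t)$ can actually be exploited. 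The triangle inequality then yields, for $t$ large, $\Vert S_{\phi^t(v)}'(0)-S_{\phi^t(v')}'(0)\Vert<\delta$ and the same for $U$, whence by flow invariance and the Lipschitz property of $\det$ and $\trace$ on the bounded set of matrices involved (cf.\ \eqref{eq:Dwbound}), $|\det D(v)-\det D(v')|$ and $|\trace D(v)-\trace D(v')|$ are smaller than any positive number, hence zero. So your identification of the obstacle is only half right: proving contraction is one ingredient, but contraction alone---even exponential---does not suffice; the essential missing idea is the uniform finite-time approximation that replaces the unavailable uniform continuity of $\det D$ and $\trace D$.
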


Let $\rank(v) = 1$. We first provide a proof of the lemma for the stable leaf $W^s(v)$. Since $\rank(v) = 1$, there exists $\alpha > 0$ such that $\det D(v) = 2 \alpha > 0$ and there exists an open neighbourhood $V^s \subset W^s(v)$ with $\det D(v') = \det D(\phi^t(v')) > \alpha$ for all $v' \in V^s$ and all $t \in \R$, due to the continuity of $D=U-S$ provided by Lemma \ref{lem:contasymp} and the invariance proved in the previous subsection. In other words, we have $V^s \subset SX_\alpha \cap W^s(v)$. 
Consequently, we have 
\begin{equation} \label{eq:Dcrucial} 
D(\phi^t(v)) \ge \rho \cdot \Id \quad \text{for all $v \in V^s$ and all $t \in \R$,} 
\end{equation}
with $\rho > 0$ related to $\alpha$ by \eqref{eq:alpharho}.
It suffices to prove 
\begin{equation} \label{eq:dettrace}
| \det D(v) - \det D(v')| = 0 \quad \text{and} \quad  | \trace D(v) - \trace D(v') | = 0
\end{equation}
for all $v' \in V^s$. 

For the proof of \eqref{eq:dettrace}, we need the following two propositions from \cite{KP}, which are both based on the crucial lower bound  \eqref{eq:Dcrucial} for the symmetric endomorphisms $D(v)$ fo $v \in \phi^\R(V^s)$. The first proposition is the following comparison result. It is stated in Section 3 of \cite{KP} under the assumption that $(X,g)$ is asymptotically harmonic, but its short proof goes also through in the more general setting of manifolds without conjugate points. 

\begin{proposition}[see {\cite[Lemma 3.3]{KP}}] 
\label{prop:Sdiffar}
There exists a constant $a \ge 1$, depending only on $R_0$ and $\rho$, such that
$$ 0 < S_{\phi^t(w)}'(0) - S_{\phi^t(w),r}'(0) \le \frac{a}{r} \quad \text{and} \quad 0 < U_{\phi^t(w),r}'(0) - U_{\phi^t(w)}'(0) \le \frac{a}{r} $$
for all $w \in V^s$, $r > 0$ and all $t \in \mathbb{R}$.
\end{proposition}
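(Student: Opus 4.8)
The plan is to reduce the rate estimate to a uniform decay estimate for the stable Jacobi tensor along $\phi^{\R}(V^s)$, which is exactly where the lower bound \eqref{eq:Dcrucial} enters. Positivity of both differences is immediate from the monotonicity of the finite tensors recalled above: since $S'_{v,r}(0)$ increases to $S'_v(0)$ and $U'_{v,r}(0)$ decreases to $U'_v(0)$, we get $S'_v(0) - S'_{v,r}(0) > 0$ and $U'_{v,r}(0) - U'_v(0) > 0$. For the upper bound I would first derive a closed formula for the stable difference. Let $A_v$ denote the Jacobi tensor along $c_v$ with $A_v(0) = 0$ and $A_v'(0) = \Id$. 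Because the stable tensor is Lagrangian (i.e. $S_v(s)^* S_v'(s)$ is symmetric, equivalently $S(\phi^s v)$ is symmetric), reduction of order gives $A_v(t) = S_v(t) \int_0^t ( S_v(s)^* S_v(s) )^{-1}\, ds$. Since $S_v - S_{v,r}$ is a Jacobi tensor vanishing at $0$, with derivative $S'_v(0) - S'_{v,r}(0)$ at $0$ and value $S_v(r) - S_{v,r}(r) = S_v(r)$ at $r$, uniqueness of Jacobi tensors yields $S_v(r) = A_v(r)\,(S'_v(0) - S'_{v,r}(0))$, hence
\begin{equation} \label{eq:star}
S'_v(0) - S'_{v,r}(0) = A_v(r)^{-1} S_v(r) = \left( \int_0^r \left( S_v(s)^* S_v(s) \right)^{-1}\, ds \right)^{-1}.
\end{equation}

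The crux is then to show that $\Vert S_v(s) \Vert$ is uniformly bounded (in fact decays exponentially) for $s \ge 0$, with constants depending only on $R_0$ and $\rho$. Here I would use the monotone quantity $g(s) = \langle D(\phi^s v) J(s), J(s) \rangle$ attached to a stable Jacobi field $J = S_v \xi$, $\xi \in v^\bot$ with $|\xi|=1$. Using $J' = S(\phi^s v) J$ from \eqref{eq:S'rel}, the evolution $D'(\phi^s v) = (HD + DH)(\phi^s v)$ from Step 1, and the purely algebraic identity $D' + DS + SD = -D^2$ (which follows by substituting $D = U - S$ and $H = -\tfrac12(S+U)$), a short computation gives the clean formula
\begin{equation*}
g'(s) = - \langle D(\phi^s v)^2 J(s), J(s) \rangle = - \left| D(\phi^s v) J(s) \right|^2 .
\end{equation*}
Since $\rho\, \Id \le D(\phi^s v) \le 2\sqrt{R_0}\,\Id$ along the orbit by \eqref{eq:Dcrucial} and \eqref{eq:Dwbound}, we have $\rho |J|^2 \le g \le 2\sqrt{R_0}\, |J|^2$ and $|DJ|^2 \ge \rho^2 |J|^2$, whence $g'(s) \le -\tfrac{\rho^2}{2\sqrt{R_0}}\, g(s)$. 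Thus $g$ decays exponentially, and $|S_v(s)\xi|^2 = |J(s)|^2 \le g(s)/\rho \le (2\sqrt{R_0}/\rho)\, e^{-\rho^2 s/(2\sqrt{R_0})}$, giving $\Vert S_v(s) \Vert \le \sqrt{2\sqrt{R_0}/\rho}$ for all $s \ge 0$ and all $v \in \phi^{\R}(V^s)$.

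Finally, feeding this bound into \eqref{eq:star}: with $a := \max\{1, 2\sqrt{R_0}/\rho\}$ we have $S_v(s)^* S_v(s) \le a\, \Id$, hence $(S_v^* S_v)^{-1}(s) \ge a^{-1}\Id$ and $\int_0^r (S_v^* S_v)^{-1}\, ds \ge (r/a)\,\Id$; inverting yields $S'_v(0) - S'_{v,r}(0) \le (a/r)\,\Id$. Applying this with $v$ replaced by $\phi^t(w)$ for $w \in V^s$ (legitimate since \eqref{eq:Dcrucial} holds along the whole orbit) gives the stated stable estimate uniformly in $t$, and the unstable estimate follows by symmetry, using $U_v(t) = S_{-v}(-t)$ and $D(-v) = D(v) \ge \rho$ from \eqref{eq:Drev}, so that the same decay applies to the stable tensor of $-v$. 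The main obstacle is precisely the decay estimate of the middle paragraph, which is the only step using the lower bound $\rho$: the monotone quantity $g$ with $g' = -|DJ|^2$ is what converts the pointwise separation $D \ge \rho\,\Id$ into genuine exponential contraction of stable Jacobi fields, and hence into the crude but universal $1/r$ rate.
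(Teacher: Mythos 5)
Your proof is correct and is essentially the proof the paper relies on: the paper gives no argument of its own beyond citing \cite[Lemma 3.3]{KP} (remarking that its short proof carries over to manifolds without conjugate points), and that proof is precisely your combination of the reduction-of-order identity $S'_v(0)-S'_{v,r}(0)=\left(\int_0^r (S_v^*S_v)^{-1}(s)\,ds\right)^{-1}$ with a uniform bound on $\Vert S_v(s)\Vert$ extracted from $D(\phi^s v)\ge\rho\,\Id$ along the whole orbit, plus the flip symmetry $U_v(t)=S_{-v}(-t)$, $D(-v)=D(v)$ for the unstable case. One cosmetic sharpening of your middle step: estimating $\Vert D J\Vert^2=\langle D^2 J,J\rangle\ge\rho\,\langle DJ,J\rangle=\rho\,g$ (using $D\ge\rho\,\Id$ directly) instead of $\Vert DJ\Vert^2\ge\rho^2\Vert J\Vert^2\ge\tfrac{\rho^2}{2\sqrt{R_0}}\,g$ gives $g'\le-\rho\,g$ and hence $\Vert S_v(s)w\Vert\le\sqrt{2\sqrt{R_0}/\rho}\;e^{-\rho s/2}\Vert w\Vert$, which is exactly the decay rate of \cite[Proposition 2.5(b)]{KP} quoted elsewhere in the paper, though your weaker uniform bound already suffices for the $a/r$ estimate.
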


Let $v' \in V^s$ and $\gamma: [0,1] \to V^s \subset W^s(v)$ be a continuously differentiable curve with $\gamma(0) = v$ and $\gamma(1) = v'$. Let $\gamma_t = \phi^t \circ \gamma$. In the second proposition, we compare symmetric endomorphisms on different vector spaces $\gamma_t(0)^\bot$ and $\gamma_t(1)^\bot$. For any fixed $t \in \R$, we introduce an orthonormal frame $E_1^t,\dots,E_{n-1}^t: [0,1] \to TM$ of $\gamma_t^\bot$ along the footpoint curve $\beta_t = \pi \circ \gamma_t: [0,1] \to \mathcal{H}(t)$ in the horosphere $\mathcal{H}(t)$ corresponding to $W^s(\phi^t(v))$ as follows: $E_1^t(0),\dots,E_{n-1}^t(0),\phi^t(v)$ is an orthonormal basis of $T_{\pi(\phi^t(v))}X$ and $E_j^t$ is the parallel extension of $E_j^t(0)$ inside the hypersurface $\mathcal{H}(t) \subset X$ along $\beta_t$ for $j=1,\dots,n-1$. A symmetric endomorphism $A \in {\rm{End}}_{\rm{sym}}(\gamma_t(s)^\bot)$ can then be identified with the symmetric matrix with entries $\langle A (E_j^t(s)), E_k^t(s) \rangle_{1 \le j,k \le n-1}$.



\begin{proposition}[see {\cite[Corollary 2.6 and Theorem 2.8]{KP}}] \label{prop:SLbeta-estimate}
Let $\beta_t = \pi(\gamma_t)$ for all $t \in \mathbb{R}$. Then there exists a function $b: \mathbb{R} \to (0,\infty)$, depending only on $R_0, \rho$ and $n$, such that we have 
\begin{equation} \label{eq:btlength} 
\Vert \dot \beta_t(s) \Vert \le b(t) \Vert \dot \beta_0(s) \Vert \quad \text{for all $s \in [0,1]$ and $t \in \mathbb{R}$.} 
\end{equation}
Moreover, we have for $t \ge 0$
\begin{equation} \label{eq:btfunc} 
b(t) \le a e^{-\frac{\rho}{2} t}, 
\end{equation}
with a constant $a$ depending only on $R_0, \rho$ and $n$.

Let $r > 1 $. Then there exists a constant $C > 0$, depending only on $R_0,R_0',\rho,r$ and the dimension $n$, such that, for all $t \in \R$,
\begin{equation} \label{eq:Svtv} 
\Vert S_{\phi^t(v'),r}'(0) - S_{\phi^t(v),r}'(0) \Vert \le C \cdot \ell(\beta_t) 
\end{equation}
and 
\begin{equation}
\label{eq:Uvtv}
\Vert U_{\phi^t(v'),r}'(0) - U_{\phi^t(v),r}'(0) \Vert \le C \cdot \ell(\beta_t), 
\end{equation}
where $\ell(\beta_t)$ is the length of the curve $\beta_t$,
$S_{\phi^t(v),r}'(0), U_{\phi^t(v),r}'(0)$ and $S_{\phi^t(v'),r}'(0), U_{\phi^t(v'),r}'(0)$ are represented by their matrices with respect to the basis $E_1^t(0),\dots,E_{n-1}^t(0)$ and $E_1^t(1),\dots,E_{n-1}^t(1)$, respectively, and the norms chosen in \eqref{eq:Svtv} and \eqref{eq:Uvtv} are the operator norms for symmetric $(n-1)\times(n-1)$ matrices. 
\end{proposition}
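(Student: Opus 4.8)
The plan is to reduce both assertions to estimates for stable and unstable Jacobi tensors along the geodesics $c_{\gamma_t(s)}$, where $\gamma_t = \phi^t\circ\gamma$, and then to feed in the uniform lower bound \eqref{eq:Dcrucial}. First I would identify the velocity field $\dot\beta_t(s)$ geometrically. Writing the footpoint map as $(s,\tau)\mapsto c_{\gamma(s)}(\tau)=\pi(\phi^\tau(\gamma(s)))$, the variation field $J_{s}(\tau)=\frac{\partial}{\partial s}c_{\gamma(s)}(\tau)$ is an orthogonal Jacobi field along $c_{\gamma(s)}$; since the vectors $\gamma(s)$ lie in the stable leaf $W^s(v)$ and hence correspond to positively asymptotic geodesics, $J_{s}$ is the \emph{stable} Jacobi field with $J_{s}(0)=\dot\beta_0(s)$ and $J_{s}'(0)=S(\gamma(s))\dot\beta_0(s)$. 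In particular $\dot\beta_t(s)=J_{s}(t)$, so \eqref{eq:btlength} is exactly an operator-norm contraction statement for the stable flow.

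For the decay \eqref{eq:btfunc} I would use the following monotonicity identity, which is the heart of the matter. For any stable Jacobi field $Y$ along $c_w$ with $w\in\phi^{\R}(V^s)$, combining $Y'=S(\phi^\tau w)Y$ with $D'=S^2-U^2$ from Step 1 and the Riccati equation gives, after a short computation,
\[
\frac{d}{d\tau}\langle D(\phi^\tau w)Y(\tau),Y(\tau)\rangle = -\Vert D(\phi^\tau w)Y(\tau)\Vert^2 .
\]
Setting $p(\tau)=\langle D(\phi^\tau w)Y,Y\rangle\ge 0$ and using \eqref{eq:Dcrucial} in the form $\Vert DY\Vert^2=\langle D^2Y,Y\rangle\ge\rho\,\langle DY,Y\rangle$, this yields $p'\le-\rho\,p$, hence $p(\tau)\le p(0)e^{-\rho\tau}$. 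Combining $\rho\Vert Y(\tau)\Vert^2\le p(\tau)$ with $p(0)\le\Vert D(w)\Vert\,\Vert Y(0)\Vert^2\le 2\sqrt{R_0}\,\Vert Y(0)\Vert^2$ from \eqref{eq:Dwbound} gives $\Vert Y(\tau)\Vert\le\sqrt{2\sqrt{R_0}/\rho}\;e^{-\rho\tau/2}\Vert Y(0)\Vert$. Applying this to $Y=J_{s}$ proves \eqref{eq:btlength} and \eqref{eq:btfunc} with $a=\sqrt{2\sqrt{R_0}/\rho}$; for $t<0$ a crude Jacobi estimate from $\Vert R\Vert\le R_0$ over finite time provides a finite value $b(t)$.

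For the Lipschitz estimates \eqref{eq:Svtv} and \eqref{eq:Uvtv} I would fix $t$ and $r>1$ and differentiate the boundary-value tensor $S_{\gamma_t(s),r}$ in $s$. Its only data is the curvature operator $\tau\mapsto R_{\gamma_t(s)}(\tau)$ on $[0,r]$, and as $s$ varies this changes at a rate bounded by $\Vert\nabla R\Vert\le R_0'$ times the spread $\Vert J_{s,t}(\tau)\Vert$ of the geodesic variation. Over the fixed interval $[0,r]$ the standard Jacobi estimate from $\Vert R\Vert\le R_0$ controls $\Vert J_{s,t}(\tau)\Vert$ by a constant (depending on $R_0,r$) times $\Vert J_{s,t}(0)\Vert=\Vert\dot\beta_t(s)\Vert$, using $\Vert J_{s,t}'(0)\Vert=\Vert S(\gamma_t(s))\dot\beta_t(s)\Vert\le\sqrt{R_0}\,\Vert\dot\beta_t(s)\Vert$. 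A Gronwall argument for the linear equation governing $s\mapsto S_{\gamma_t(s),r}'(0)$ then gives $\Vert\frac{\partial}{\partial s}S_{\gamma_t(s),r}'(0)\Vert\le C\Vert\dot\beta_t(s)\Vert$ with $C=C(R_0,R_0',\rho,r,n)$ independent of $t$, and integrating over $s\in[0,1]$ produces the bound $C\cdot\ell(\beta_t)$; the estimate for $U_{\gamma_t(s),r}'(0)$ is identical. All comparisons are read in the parallel frames $E_j^t$, so that $\frac{\partial}{\partial s}$ acts as the covariant derivative along $\beta_t$ and no holonomy terms intervene.

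The main obstacle I expect is this last step: keeping $C$ uniform in $t$ while working with the finite-parameter tensors $S_{\cdot,r},U_{\cdot,r}$, whose inverses degenerate as $\tau\to r$, and simultaneously transporting everything into the moving frames $E_j^t$. Bounding the fundamental solution of the perturbed linear ODE uniformly — the step where the ellipticity \eqref{eq:Dcrucial} (through $\rho$) and the derivative bound $R_0'$ must be combined with care — is the technically delicate part, whereas the contraction of Claim A follows cleanly from the monotonicity identity above.
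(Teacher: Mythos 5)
Your proposal should first be measured against the right target: the paper itself gives no proof of this proposition --- it is imported verbatim from \cite[Corollary 2.6 and Theorem 2.8]{KP}, with only the remark that $C^1$ regularity of $\gamma$ suffices in place of smoothness. So what you have really done is reconstruct the arguments of \cite{KP}, and for the first half you do so correctly and completely. Your identification $\dot\beta_t(s)=S_{\gamma(s)}(t)\bigl(P_t\dot\beta_0(s)\bigr)$ is exactly the identity the paper itself uses in the proof of Proposition \ref{prop:asgeod}, and your monotonicity identity checks out: with $Y'=SY$ and $D'=S^2-U^2$ one gets
\begin{equation*}
D'+DS+SD=S^2-U^2+(U-S)S+S(U-S)=-(U-S)^2=-D^2,
\end{equation*}
so $\tfrac{d}{d\tau}\langle DY,Y\rangle=-\Vert DY\Vert^2$; together with $D^2\ge\rho D$ (valid since $D$ and $D-\rho\,\Id$ are commuting nonnegative operators, using \eqref{eq:Dcrucial}, which holds along the full orbits $\phi^{\R}(V^s)$ by the flow invariance of $\det D$ from Step 1) and the upper bound \eqref{eq:Dwbound}, this yields \eqref{eq:btlength} and \eqref{eq:btfunc} with $a=\sqrt{2\sqrt{R_0}/\rho}$. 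This is precisely the mechanism behind \cite[Proposition 2.5]{KP}, which the paper invokes at the step $(*)$ of Proposition \ref{prop:asgeod}; the crude backward-time bound via $\Vert S(\cdot)\Vert\le\sqrt{R_0}$ is also fine.

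For the Lipschitz estimates \eqref{eq:Svtv}--\eqref{eq:Uvtv} your strategy (differentiate in $s$, bound $\partial_s R$ by $R_0'$ times the variation field, control the variation field on a parameter interval of length $r$ by Jacobi comparison) is the right one, but the step you yourself flag as the ``technically delicate part'' is a genuine gap as written: Gronwall applies to initial value problems, not directly to the two-point boundary value problem defining $S_{\cdot,r}$, and tellingly your sketch never uses the hypothesis $r>1$. The clean way to close it is to eliminate the boundary value problem altogether by writing
\begin{equation*}
S_{w,r}'(0)=A_{\phi^r w}'(-r)\,A_{\phi^r w}(-r)^{-1},
\qquad
U_{w,r}'(0)=A_{\phi^{-r} w}'(r)\,A_{\phi^{-r} w}(r)^{-1},
\end{equation*}
the second of which is exactly the identity used in the paper's proof of Proposition \ref{prop:DAhoromean}. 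With this reduction, $\partial_s$ of the initial-value tensors $A$ and $A'$ is controlled by precisely the Gronwall argument (via \cite{Am}) that the paper carries out in Proposition \ref{prop:detdetestimate}, the inverses $A_{\cdot}(\pm r)^{-1}$ are uniformly bounded by \cite[Lemma I.2.13]{K0} because the evaluation point is at parameter distance $r\ge 1$ from the zero of $A$ --- this is exactly where $r>1$ enters --- and all constants depend only on $R_0,R_0',r,n$ (hence are uniform in $t$), since every input is a global bound along the geodesics. Your parenthetical claim that the horospherical frames $E_j^t$ contribute no extra terms is in fact correct, since their $s$-derivatives are purely normal to the horosphere and are annihilated in the relevant matrix entries, but this deserves the one-line verification rather than an appeal to ``no holonomy.''
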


Corollary 2.6 and Theorem 2.8 in \cite{KP} states the result for smooth curves $\gamma$, but it can be checked that continuous differentiability of $\gamma$ is sufficient. Having recalled these two results, we can now continue with our proof of \eqref{eq:dettrace}.

To prove \eqref{eq:dettrace}, we observe that
\begin{align*} 
| \trace D(v) - \trace D(v') | &= | \trace D(\phi^t(v)) - \trace D(\phi^t(v')) | \\
&\le (n-1) \Vert D(\phi^t(v)) - D(\phi^t(v')) \Vert \\ &\le (n-1)\left( \Vert S_{\phi^t(v)}'(0) - S_{\phi_t(v')}'(0) \Vert + \Vert U_{\phi^t(v)}'(0) - U_{\phi^t(v')}'(0) \Vert \right). 
\end{align*}
Similarly, using that $D(v), D(v')$ are matrices in a compact set by \eqref{eq:Dwbound} and the fact the determinant is a smooth function, we have a uniform Lipschitz constant $A > 0$, only depending on $R_0$ and the dimension $n$, such that, for all $t \ge 0$,
\begin{align*} 
| \det D(v) - \det D(v') | &= | \det D(\phi^t(v)) - \det D(\phi^t(v')) | \\
&\le A \Vert D(\phi^t(v)) - D(\phi^t(v')) \Vert \\ &\le A\left( \Vert S_{\phi^t(v)}'(0) - S_{\phi_t(v')}'(0) \Vert + \Vert U_{\phi^t(v)}'(0) - U_{\phi^t(v')}'(0) \Vert \right). 
\end{align*}
Therefore, it suffices to prove, for every $\delta > 0$, that there exists $t \ge 0$ with 
\begin{equation} \label{eq:SUdelta} 
\Vert S_{\phi^t(v)}'(0) - S_{\phi^t(v')}'(0) \Vert < \delta \quad \text{and} \quad \Vert U_{\phi^t(v)}'(0) - U_{\phi^t (v')}'(0) \Vert < \delta. 
\end{equation}
We apply Proposition \ref{prop:Sdiffar} and choose $r > 0$ large enough to have
\begin{equation} \label{eq:SUv'v'r} 
\Vert S_{\phi^t(w)}'(0) - S_{\phi^t(w),r}'(0) \Vert < \frac{\delta}{3} \quad \text{and} \quad \Vert U_{\phi^t(w)}'(0) - U_{\phi^t(w),r}'(0) \Vert < \frac{\delta}{3} \quad \text{for all $w \in V^s$ and all $t$}. 
\end{equation}
Proposition \ref{prop:SLbeta-estimate} implies that we have, for all $t \ge 0$,
\begin{align*} 
\Vert S_{\phi^t(v'),r}'(0) - S_{\phi^t(v),r}'(0) 
\Vert, \Vert U_{\phi^t(v'),r}'(0) - U_{\phi^t(v),r}'(0) 
\Vert &\stackrel{\eqref{eq:Svtv},\eqref{eq:Uvtv}}{\le} C  \ell(\beta_t) \\ &\stackrel{\eqref{eq:btlength}}{\le} b(t) \cdot C \ell(\beta) \\
&\stackrel{\eqref{eq:btfunc}}{\le} e^{-\frac{\rho}{2}t}\cdot a C \ell(\beta).
\end{align*}
Choosing $t \ge 0$ large enough, we have
$$ \Vert S_{\phi^t(v'),r}'(0) - S_{\phi^t(v),r}'(0) 
\Vert, \Vert U_{\phi^t(v'),r}'(0) - U_{\phi^t(v),r}'(0) 
\Vert < \frac{\delta}{3}, $$
and combining this with \eqref{eq:SUv'v'r}, we obtain \eqref{eq:SUdelta}. This finishes the proof of the lemma for the stable leaf $W^s(v)$. 

To prove the analogous result for the unstable leaf, we use the relation $W^u(v) = - W^s(-v)$. Then the above proof shows that there exists an open neighbourhood $V^s \subset W^s(-v)$ of $-v$ on which $\det D$ and $\trace D$ are both constant. Using reversibility of $\det D$ and $\trace D$, we conclude that both functions are constant on $V^u = - V^s \subset W^u(v)$. This finishes the proof of Lemma \ref{lem:localconst}. \qed

\smallskip

Our above local result yields the following global result.

\begin{corollary} Assume  $\rank(v) = 1$. Then both $\det D$ and $\trace D$ are constant on $W^{0s}(v)$ and $W^{0u}(v)$.
\end{corollary}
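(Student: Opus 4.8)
The plan is to bootstrap from the local statement of Lemma~\ref{lem:localconst} to a global one on the full weak leaves $W^{0s}(v)$ and $W^{0u}(v)$, using connectedness together with the invariance under the geodesic flow established in Step~1. The key observation is that the weak stable leaf $W^{0s}(v)$ is foliated by the orbits of the geodesic flow through the strong stable leaf: indeed, by definition $W^{0s}(v) = \{-\grad b_v(q) : q \in X\}$, and since $\Delta b_v$ is well-defined along the entire Busemann flow, every point of $W^{0s}(v)$ is of the form $\phi^t(w)$ for some $w \in W^s(v)$ and some $t \in \R$. (Concretely, for $q \in b_v^{-1}(s)$ one has $-\grad b_v(q) = \phi^{-s}(w)$ where $w = -\grad b_v(q') \in W^s(v)$ with $q'$ the point on the horosphere $b_v^{-1}(0)$ lying on the gradient line through $q$.)

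\smallskip

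Since $\det D$ and $\trace D$ are invariant under the geodesic flow by Step~1, it therefore suffices to prove they are globally constant on the strong stable leaf $W^s(v)$; the invariance then propagates the value to all of $W^{0s}(v)$. To pass from the local constancy on $V^s$ (Lemma~\ref{lem:localconst}) to global constancy on $W^s(v)$, I would first check that the rank-one hypothesis is preserved along $W^s(v)$: because $\det D$ is continuous (Lemma~\ref{lem:contasymp}) and locally constant and positive near $v$, the set $\{w \in W^s(v) : \det D(w) = \det D(v)\}$ is both open and closed in $W^s(v)$. More precisely, any $w$ in this set has $\rank(w) = 1$, so Lemma~\ref{lem:localconst} applies at $w$ and yields a neighbourhood in $W^s(w) = W^s(v)$ on which $\det D$ and $\trace D$ are constant — hence equal to their values at $w$ by continuity. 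This shows the level set where the pair $(\det D, \trace D)$ equals $(\det D(v), \trace D(v))$ is open; it is closed by continuity of $D$. Since $W^s(v)$ is connected (it is homeomorphic to the horosphere $b_v^{-1}(0)$, which is diffeomorphic to $\R^{n-1}$), the level set is all of $W^s(v)$.

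\smallskip

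The argument for $W^{0u}(v)$ is identical, using the unstable analogue in Lemma~\ref{lem:localconst} together with the relation $W^u(v) = -W^s(-v)$ and the reversibility of $\det D$ and $\trace D$ from Step~1.

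\smallskip

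\textbf{The main obstacle} I anticipate is the clopen-ness argument: one must verify that local constancy genuinely propagates, i.e. that at \emph{every} point $w$ of the level set the rank-one condition holds so that Lemma~\ref{lem:localconst} is applicable there, not merely at the base point $v$. This is where positivity $\det D(v) > 0$ is essential — it guarantees $\rank(w) = 1$ throughout the level set — and it is the reason the openness half of the clopen argument goes through. The closedness half is immediate from continuity of $w \mapsto D(w)$, but care is needed to confirm that the strong stable leaf is connected so that the clopen set is forced to be the whole leaf.
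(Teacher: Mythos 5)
Your proposal is correct and follows essentially the same route as the paper's proof: reduce to the strong stable leaf $W^s(v)$ via the flow invariance from Step~1, then run a clopen argument on the connected leaf, where openness comes from Lemma~\ref{lem:localconst} (applicable at every point of the level set because $\det D > 0$ there forces rank one) and closedness from continuity of $D$, with the unstable case handled by reversibility. The only cosmetic difference is that you treat the pair $(\det D, \trace D)$ in one level set while the paper first settles $\det D$ (concluding all vectors in $W^s(v)$ are rank one) and then repeats the argument for $\trace D$; both variants are valid.
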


\begin{proof}
    By Subsection \ref{subsec:detconstgeod}, it suffices to prove the corollary for the stable and unstable manifolds $W^s(v)$ and $W^u(v)$. Since $\rank(v) = 1$, we have $\delta = \det D(\tilde v) > 0$. We consider the closed, non-empty subset
    $$ W = \{ w \in W^s(v): \det D(w) = \delta \} $$
    of $W^s(v)$. Since $W$ is also open in $W^s(v)$, by Lemma \ref{lem:localconst} and $W^s(v)$ is connected, we conclude that $W = W^s(v)$, and $\det D$ is constant on $W^s(v)$. In particular, all vectors in $W^s(v)$ are rank one. The same arguments can now be used to show that $\trace D$ is also constant on $W^s(v)$. The proof for $W^u(v)$ is analogous.
\end{proof}

\subsection{Step 3: Continuity of $(p,v) \mapsto - \grad b_v(p)$ on the set $X \times SX_\alpha$}

In this subsection, we assume that $(X,g)$ is a rank one simply connected manifold with invariant horospherical mean curvature function, satisfying $\Vert R \Vert \le R_0$ and $\Vert \nabla R \Vert \le R_0'$. It follows from the previous subsections that, for $\alpha >0$,  
$W^{0s}(v), W^{0u}(v) \subset SX_\alpha$ for any vector $v \in SX_\alpha$. This subsection is devoted to the proof that the map $(p,v) \mapsto -\grad b_v(p)$ is continuous on $X \times SX_\alpha$ for any fixed $\alpha >0$. Our proof is based on the following results.

\begin{proposition}[Divergence of geodesic rays on the set $SX_\alpha$]
  \label{prop:divgeod}
  Let $\alpha >0$ and $v,w \in SX_\alpha$ be two different unit vectors with $p = \pi(v) = \pi(w)$. Then we have
  \begin{equation} \label{eq:dcvcw} 
  d(c_v(t),c_w(t)) \to \infty \quad \text{as $t \to \infty$.} 
  \end{equation}
\end{proposition}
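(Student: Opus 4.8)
The plan is to reduce the metric statement to the behaviour of a single monotone Busemann quantity. For the two geodesics $c_v$ and $c_w$ emanating from $p$, I would introduce
$$ \beta(t) = b_v(c_w(t)) + t = b_v(c_w(t)) - b_v(c_v(t)), $$
using $b_v(c_v(t)) = -t$. Since $b_v \in C^2$ is $1$-Lipschitz (with $\Vert \grad b_v \Vert \equiv 1$), this gives $0 \le \beta(t) \le d(c_v(t),c_w(t))$, and, writing $\xi(t) = -\grad b_v(c_w(t))$,
$$ \beta'(t) = 1 + \langle \grad b_v(c_w(t)),\dot c_w(t)\rangle = 1 - \langle \xi(t),\dot c_w(t)\rangle = \tfrac12 \Vert \xi(t) - \dot c_w(t)\Vert^2 \ge 0, $$
so $\beta$ is non-negative and non-decreasing with $\beta(0) = b_v(p) = 0$. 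Because $\beta(t) \le d(c_v(t),c_w(t))$, it suffices to prove $\beta(t) \to \infty$; as $\beta$ is monotone, the only alternative I must rule out is that $\beta$ stays bounded.

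So suppose $\beta(t)\to L<\infty$. Then $\int_0^\infty \Vert \xi(t) - \dot c_w(t)\Vert^2\,dt = 2L<\infty$, i.e.\ $c_w$ is forward-asymptotic to $c_v$ in the weak sense that its velocity converges in $L^2$ to the stable field $-\grad b_v$. The crucial point is to upgrade this weak asymptoticity to the statement $w \in W^{0s}(v)$, that is, that $c_w$ is genuinely one of the asymptotic geodesics of $c_v$. Here I would invoke the uniform lower bound $D(u)\ge \rho\cdot\Id$ valid for all $u \in SX_\alpha$ (in particular along the whole orbit by invariance, and at each $\xi(t)\in W^{0s}(v)\subset SX_\alpha$), together with the exponential contraction of stable horospheres from Proposition \ref{prop:SLbeta-estimate} (where $b(t)\le a e^{-\rho t/2}$). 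The uniform transversality of the stable foliation encoded by these estimates should force a geodesic whose velocity is asymptotic to $-\grad b_v$ to lie on a weak stable leaf: any non-vanishing component of $w$ transverse to $W^{0s}(v)$ is uniformly expanded under the forward flow and hence is incompatible with $\beta$ being bounded.

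Once $w \in W^{0s}(v)$, the proof closes at once. By definition $W^{0s}(v) = \{-\grad b_v(q): q\in X\}$ is the graph of the unit vector field $-\grad b_v$, so the footpoint projection $\pi$ is injective on $W^{0s}(v)$. Since $\pi(w) = p = \pi(v)$, this yields $w = -\grad b_v(p) = v$, contradicting $v\ne w$. Hence $\beta$ is unbounded, $\beta(t)\to\infty$, and therefore $d(c_v(t),c_w(t))\to\infty$.

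I expect the main obstacle to be the middle step, promoting weak forward-asymptoticity ($\beta$ bounded) to membership in the weak stable leaf $W^{0s}(v)$. This is precisely where the rank one hypothesis enters, through the uniform bound $D\ge\rho\cdot\Id$ on $SX_\alpha$: in a general manifold without conjugate points the asymptote relation can be strictly coarser than the stable foliation, and it is only the uniform hyperbolicity (absence of almost-parallel Jacobi fields along the orbit) that excludes a geodesic which is asymptotically, but not exactly, asymptotic to $c_v$. Making this quantitative with the given uniform curvature bounds $\Vert R\Vert \le R_0$ and $\Vert \nabla R\Vert\le R_0'$, but without any cocompactness — presumably via a Riccati or Jacobi-tensor estimate controlling the transverse separation — is the technical heart of the argument.
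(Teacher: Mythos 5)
Your reduction is correct as far as it goes: $\beta(t)=b_v(c_w(t))+t$ is well defined (Busemann functions are $C^2$ here by Lemma \ref{lem:contasymp}), non-negative, non-decreasing with $\beta'(t)=\tfrac12\Vert\xi(t)-\dot c_w(t)\Vert^2$, and $\beta\le d(c_v(\cdot),c_w(\cdot))$, so it would indeed suffice to rule out that $\beta$ stays bounded. The problem is that the step you yourself flag as ``the technical heart'' --- upgrading $\int_0^\infty\Vert\xi(t)-\dot c_w(t)\Vert^2\,dt<\infty$ to $w\in W^{0s}(v)$ --- is not an argument but a restatement of what has to be proved, and the heuristic you offer for it (``any non-vanishing component of $w$ transverse to $W^{0s}(v)$ is uniformly expanded under the forward flow'') presupposes exactly the hyperbolic structure that is not available at this point of the paper. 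There is no local product structure, no expansivity, and no regularity of the stable foliation to appeal to: the Anosov property is a \emph{consequence} of Theorem \ref{thm:main1} (via \cite{KP}), and even the continuity of $(p,v)\mapsto-\grad b_v(p)$ is established in Step 3 \emph{using} this very proposition, so any argument routed through transversality or regularity of the foliation is circular in spirit. Note also that without nonpositive curvature or absence of focal points you cannot even get convexity of $\beta$: $\beta''(t)=\Hess b_v(\dot c_w,\dot c_w)$ is governed by $U(\xi(t))$, which need not be positive semidefinite; the only quantitative input available is $D=U-S\ge\rho\,\Id$ on $SX_\alpha$, and converting that into expansion of the angle between $\dot c_w(t)$ and the stable field is essentially the entire difficulty of the proposition, left unproved in your proposal.

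The paper avoids this issue entirely by a direct comparison argument anchored at the common footpoint $p$ rather than along the geodesic $c_w$. Since $SX_\alpha$ is open, one picks a cone $C_\epsilon(v)\subset S_pX\cap SX_\alpha$ with $w\notin C_\epsilon(v)$, and estimates from below the length of an \emph{arbitrary} curve $\beta$ joining $c_v(t)$ to $c_w(t)$: either $\beta$ enters $B_{t/2}(p)$, forcing $\ell(\beta)\ge t$, or $\beta$ stays outside and can be written in polar coordinates $\beta(s)=\exp_p(r(s)v(s))$ with $r\ge t/2$; as $v(s)$ must sweep out an angle at least $\epsilon$ while remaining in $C_\epsilon(v)\subset SX_\alpha$, the lower bound $D\ge\rho\,\Id$ there yields, via Bolton's estimate \cite[p.~113]{Bol-79}, exponential growth $\Vert A_{v(s)}(r)x\Vert\ge c_1e^{\rho r/2}\Vert x\Vert$ of the Jacobi tensors with $A(0)=0$, $A'(0)=\Id$, hence $\ell(\beta)\ge c_1e^{\rho t/2}\epsilon$. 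Both alternatives force $d(c_v(t),c_w(t))\to\infty$. This is the quantitative mechanism your sketch is missing: the hyperbolicity enters through growth of radial Jacobi fields inside the cone, not through any dynamical property of the stable foliation.
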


\begin{proof} Let $v,w \in SX_\alpha$ be two different unit vectors with the same footpoint $p$.
    For $\epsilon >0$, let 
    $$ C_\epsilon(v) = \left\{ v' \in S_pX: \sphericalangle_p(v,v') < \epsilon \right\} $$
    be a cone around $v$. Since $SX_\alpha \subset SX$ is open, we can find $\epsilon > 0$ small enough such that $C_\epsilon(v) \subset SX_\alpha$ and $w \not\in C_\epsilon(v)$. Let $t > 0$ be arbitrary and $\beta: [0,1] \to X$ be a curve with $\beta(0) = c_v(t)$ and $\beta(1) = c_w(t)$. If there exists $s \in (0,1)$ with $\beta(s) \in B_{t/2}(p)$, then $\ell(\beta) \ge t$. If $\beta$ satisfies $\beta([0,1]) \subset X \setminus B_{t/2}(p)$, then we can write
    $$ \beta(s) = {\rm{exp}}_p(r(s)v(s)) $$
    with $r: [0,1] \to [t/2,\infty)$ and $v: [0,1] \to S_pX$, $v(0)=v$, $v(1)=w$. Since $w \not\in C_\epsilon(v)$, we can find $s_0 \in (0,1)$ such that $v([0,s_0)) \subset C_\epsilon(v) \setminus B_{t/2}(p) \subset SX_\alpha$ and $v(s_0) \in \partial C_\epsilon(v)$, and we have
    $$ \ell(\beta) \ge \int_0^{s_0} \Vert \dot \beta(s) \Vert ds \ge \int_0^{s_0} \Vert A_{v(s)}(r(s))(P^{v(s)}_{r(s)}v'(s)) \Vert ds \ge c_1 e^{\frac{\rho t} {2}} \sphericalangle_p(v,v(s_0)) = c_1 e^{\frac{\rho t}{2}} \epsilon $$
with a suitable constant $c_1$, only depending the lower sectional curvature bound, by \cite[p. 113]{Bol-79}. Here $A(v)$ denotes the orthogonal Jacobi tensor along $c_v$ satisfying $A(0) = 0$ and $A'(0) = \Id$ and $P^{v}_{r}$ denotes parallel transport along $c_v$ from $c_v(0)$ to $c_v(r)$. Since $\beta$ was an arbitrary curve connecting $c_v(t)$ and $c_w(t)$, this implies that we have \eqref{eq:dcvcw}.
\end{proof}

A natural equivalence relation on the set of geodesic rays $c_v: [0,\infty) \to X$ is given by the condition that they stay in bounded distance, that is $c_v \sim c_w$ if 
$$ \sup_{t \ge 0} d(c_v(t),c_w(t)) < \infty. $$
In this case, the geodesic rays $c_v, c_w$ are called \emph{asymptotic}. The following proposition states that, geodesic rays with initial vectors in the same weak stable leaf in $SX_\alpha$ are asymptotic.

In the next proposition, $d_{\mathcal{H}}: \mathcal{H} \times \mathcal{H} \to [0,\infty)$ denotes the intrinsic distance in a horosphere $\mathcal{H} \subset X$ and $P_{\mathcal{H}}: X \to \mathcal{H}$ denotes the orthogonal projection of $X$ onto $\mathcal{H}$.

\begin{proposition}[Asymptotic geodesics on the set $SX_\alpha$] \label{prop:asgeod}
  Let $\alpha > 0$, $v \in SX_\alpha$, $w \in W^{0s}(v) \subset SX_\alpha$ and $\mathcal{H} = \pi(W^s(v)) \subset X$. Then we have
  $$ d(c_v(t),c_w(t)) \le d(\pi(v),\pi(w)) + a_2 e^{- \frac{\rho}{2} t} d_{\mathcal{H}}(\pi(v),P_{\mathcal{H}}(\pi(w))) \quad \text{for all $t \ge 0$,} $$
  where $\rho >0$ is given by \eqref{eq:alpharho} and $a_2 > 0$ is a constant only depending on $R_0$ and $\rho$. 
\end{proposition}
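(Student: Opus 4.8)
=== PROOF PROPOSAL ===

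\textbf{Overview of the approach.} The plan is to estimate the distance $d(c_v(t),c_w(t))$ by splitting the displacement from $\pi(v)$ to $\pi(w)$ into two orthogonal parts: a part along the geodesic direction $v$ (which contributes a fixed, $t$-independent amount) and a part tangent to the horosphere $\mathcal{H}=b_v^{-1}(0)$ through $\pi(v)$ (whose contribution decays exponentially under the flow). The first term $d(\pi(v),\pi(w))$ in the claimed bound accounts for the along-geodesic displacement, while the exponentially decaying term accounts for the transverse displacement inside the horosphere. The key mechanism is that vectors in the stable leaf $W^s(v)$ flow together at an exponential rate, which is precisely the content of the stable Jacobi tensor contraction estimate \eqref{eq:btfunc}, i.e.\ $b(t)\le a e^{-\frac{\rho}{2}t}$, already established for vectors in $SX_\alpha$.

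\textbf{Main steps.} First I would reduce to the case $w\in W^s(v)$, i.e.\ $\pi(w)=P_{\mathcal{H}}(\pi(w))$ lies on the horosphere $\mathcal{H}$ itself. In general, write $\pi(w)$ as the endpoint of first moving from $\pi(v)$ along the horosphere to $P_{\mathcal{H}}(\pi(w))$ and then flowing to $\pi(w)$; the triangle inequality would let me combine the two contributions. Second, for $w\in W^s(v)$, I would connect $\pi(v)$ to $\pi(w)=P_{\mathcal{H}}(\pi(w))$ by a curve $\beta_0$ inside $\mathcal{H}$ of length close to $d_{\mathcal{H}}(\pi(v),P_{\mathcal{H}}(\pi(w)))$, parametrize the corresponding stable vectors $\gamma(s)=-\grad b_v(\beta_0(s))\in W^s(v)$, and set $\gamma_t=\phi^t\circ\gamma$ with footpoint curves $\beta_t=\pi\circ\gamma_t$. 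Third, I would integrate the pointwise length contraction \eqref{eq:btlength}, $\Vert\dot\beta_t(s)\Vert\le b(t)\Vert\dot\beta_0(s)\Vert$, together with \eqref{eq:btfunc} to obtain
$$ \ell(\beta_t)\le a\, e^{-\frac{\rho}{2}t}\,\ell(\beta_0). $$
Since $c_v(t)=\beta_t(0)$ and $c_w(t)=\beta_t(1)$ (the flowed endpoints remain on the same geodesics), the intrinsic horospherical length bounds the ambient distance $d(c_v(t),c_w(t))\le \ell(\beta_t)$, and taking the infimum over $\beta_0$ gives the exponentially decaying transverse term with $a_2=a$. Finally, I would reassemble the general case by combining this with the along-geodesic displacement $d(\pi(v),\pi(w))$ via the triangle inequality.

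\textbf{Anticipated main obstacle.} The delicate point is the bookkeeping of the two displacement components and ensuring that the curve $\beta_t$ genuinely connects $c_v(t)$ to $c_w(t)$ after flowing. Specifically, I must verify that for $w\in W^{0s}(v)$ the geodesic $c_w$ is asymptotic in the correct sense so that $\beta_t(1)=\phi^t(\gamma(1))$ has footpoint exactly $c_w(t)$; this requires that $\gamma(1)=-\grad b_v(\pi(w))=w$, which holds only when $w$ lies in the \emph{stable} leaf, hence the need to first project to $\mathcal{H}$ and handle the orthogonal part separately. A secondary technical issue is that the curves $\gamma$ arising this way must be (at least) continuously differentiable for Proposition \ref{prop:SLbeta-estimate} to apply, and that the footpoint curves $\beta_t$ stay within $SX_\alpha$ so that the uniform constants $a,\rho$ remain valid along the whole flow; both follow from the fact, noted at the start of this subsection, that $W^{0s}(v)\subset SX_\alpha$, but they should be stated explicitly. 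The exponential contraction itself is essentially imported from \eqref{eq:btlength}–\eqref{eq:btfunc}, so the real work is the geometric decomposition rather than any new estimate.
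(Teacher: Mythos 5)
Your proposal is correct and takes essentially the same approach as the paper: decompose $w\in W^{0s}(v)$ via the orthogonal projection to $\mathcal{H}$ (equivalently, flow $w$ by $t_0=b_v(\pi(w))$ into $W^s(v)$), bound the transverse leg by the exponential contraction of stable footpoint curves (you invoke \eqref{eq:btlength}--\eqref{eq:btfunc}, while the paper applies \cite[Proposition 2.5(b)]{KP} directly to $S_{\gamma(s)}(t)$), and recombine by the triangle inequality. The only detail you leave implicit is that the along-geodesic displacement equals $|t_0|=|b_v(\pi(w))|\le d(\pi(v),\pi(w))$, which follows since $b_v$ is $1$-Lipschitz and vanishes at $\pi(v)$.
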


\begin{proof} Let $v \in SX_\alpha$ and $t_0 \in \mathbb{R}$ the unique value such that $w_0 = \phi^{t_0} w \in W^s(v)$. Note that $d(\pi(w),\mathcal{H}) = |t_0| = |b_v(\pi(w))| \le d(\pi(v),\pi(w))$. Let $\gamma: [0,1] \to W^{s}(v)$ be a $C^1$-curve satisfying $\gamma(0)=v$ and $\gamma(1)=w_0$. Let $F(s,t) = c_{\gamma(s)}(t)$ be the corresponding geodesic variation and $\beta$ and $\beta_t$ be the footpoint curve of $\gamma$ and $\phi^t \gamma$, respectively. Then we have
$$
\ell(\beta_t) = \int_0^1 \Vert \dot \beta_t(s) \Vert ds = \int_0^1 \Vert S_{\gamma(s)}(t) (P^{\gamma(s)}_t \dot \beta(s)) \Vert ds \stackrel{(*)}{\le} a_2 e^{-\frac{\rho}{2} t} \int_0^1 \Vert \dot \beta(s) \Vert ds = a_2 e^{-\frac{\rho}{2} t} \ell(\beta),
$$
where $P^{\gamma(s)}_t$ denotes parallel transport along $c_{\gamma(s)}$ and where we used \cite[Proposition 2.5(b)]{KP} in $(*)$. Since $\gamma$ was an arbitrary curve connecting $v$ and $w_0$ inside $\mathcal{H}$ and $\beta_t(0)= c_v(t)$ and $\beta_t(1) = c_{w_0}(t) = c_w(t+t_0)$, we have
\begin{align*} 
d(c_v(t),c_w(t)) \le d(c_v(t),c_{w_0}(t)) + d(c_{w_0}(t),c_w(t))  \\
\le a_2 e^{-\frac{\rho}{2} t}d_{\mathcal{H}}(\pi(v),\pi(w_0)) + |t_0|.
\end{align*}
To finish the proof, observe that we have $\pi(w_0) = P_{\mathcal{H}}(\pi(w))$.
\end{proof}

A direct consequence of Propositions \ref{prop:divgeod} and \ref{prop:asgeod} is the following: For every $v \in SX_\alpha$ and every $p \in X$, there exists precisely one vector $w \in SX_\alpha$ with $\pi(w) = p$, such that the geodesic rays $c_v, c_w: [0,\infty) \to X$ stay in bounded distance, and this vector is $-{\rm{\grad}}\, b_v(p)$. Since geodesic rays are in $1-1$ correspondence with their start vectors, the equivalence relation of asymptotic geodesic rays in $SX_\alpha$ corresponds to the following equivalence relation on $SX_\alpha$: $v, w \in SX_\alpha$ are equivalent if $w = - {\rm{grad}}\, b_v(\pi(w))$.

To prove continuity of the map $(p,v) \mapsto - {\rm{grad}}\, b_v(p)$, we need one more result for arbitrary simply connected Riemannian manifolds without conjugate points and with a global lower curvature bound, which is stated in the following lemma.

\begin{lemma} Let $(X,g)$ be a simply connected 
Riemannian manifold without conjugate points and lower curvature bound $-R_0$ for $R_0 > 0$.
Then we have for every horosphere $\mathcal{H} \subset X$ and every $x,y \in \mathcal{H}$:
$$ d_{\mathcal{H}}(x,y) \le e^{d(x,y)\frac{\sqrt{R_0}}{2}} d(x,y). $$
\end{lemma}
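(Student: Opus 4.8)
The plan is to bound $d_{\mathcal H}(x,y)$ by the length of a single explicit curve in $\mathcal H$ joining $x$ and $y$, namely the orthogonal projection onto $\mathcal H$ of the ambient minimizing geodesic from $x$ to $y$. Write $\mathcal H = b_v^{-1}(0)$, let $\sigma:[0,L]\to X$ be the unit-speed minimizing geodesic with $\sigma(0)=x$, $\sigma(L)=y$ and $L=d(x,y)$, and set $f(s)=b_v(\sigma(s))$. Since $x,y\in\mathcal H$ we have $f(0)=f(L)=0$, and since $\Vert\grad b_v\Vert\equiv 1$ we have $|f'(s)|=|\langle\grad b_v,\dot\sigma(s)\rangle|\le 1$. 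Integrating from either endpoint gives $|f(s)|\le\min(s,L-s)\le L/2$, which is exactly the source of the exponent $L/2$ in the claimed bound.

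First I would record the geometry of the projection. The integral curves of $\grad b_v$ are precisely the geodesics asymptotic to $c_v$, so the gradient flow $\Phi_\tau$ (normalized so that $b_v(\Phi_\tau(p))=b_v(p)-\tau$) moves along these geodesics, carries each level set $b_v^{-1}(c)$ to $b_v^{-1}(c-\tau)$, and satisfies $d\Phi_\tau(\grad b_v)=\grad b_v$ at the image point. Hence the nearest-point projection satisfies $P_{\mathcal H}(\sigma(s))=\Phi_{f(s)}(\sigma(s))=:\gamma(s)$, and on differentiating the composition one finds that the normal part of $d\Phi_{f(s)}(\dot\sigma(s))$ cancels the flow-velocity term $-f'(s)\grad b_v|_{\gamma(s)}$, leaving
\[ \dot\gamma(s)=d\Phi_{f(s)}\bigl(\dot\sigma(s)^\top\bigr), \]
where $\dot\sigma(s)^\top$ is the component of $\dot\sigma(s)$ tangent to the horosphere through $\sigma(s)$, so that $\Vert\dot\sigma(s)^\top\Vert\le 1$.

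The key analytic step is the estimate $\Vert d\Phi_\tau(W)\Vert\le e^{\sqrt{R_0}\,|\tau|}\Vert W\Vert$ for $W$ tangent to a level set. Along a flow-line geodesic, $\tau\mapsto d\Phi_\tau(W)$ is the horospherical Jacobi field $J(\tau)$ with $J(0)=W$, obeying $\frac{D}{d\tau}J=A(\tau)J$, where $A(\tau)$ is the shape operator (equivalently $\Hess b_v$) of the level set $b_v^{-1}(b_v(p)-\tau)$. Every level set of $b_v$ is a horosphere, so by \eqref{eq:Dwbound} its shape operator obeys $\Vert A(\tau)\Vert\le\sqrt{R_0}$; hence $\frac{d}{d\tau}\Vert J\Vert^2=2\langle A J,J\rangle$ lies between $\pm 2\sqrt{R_0}\Vert J\Vert^2$, and Gronwall yields the exponential bound for both signs of $\tau$. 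Combining the pieces,
\[ d_{\mathcal H}(x,y)\le\ell(\gamma)=\int_0^L\Vert\dot\gamma(s)\Vert\,ds\le\int_0^L e^{\sqrt{R_0}\,|f(s)|}\,ds\le\int_0^L e^{\sqrt{R_0}\,L/2}\,ds=L\,e^{\sqrt{R_0}\,L/2}, \]
which is the assertion. (A sanity check in $\mathbb{H}^n(-R_0)$, where horospheres are intrinsically flat and the shape operator equals $\sqrt{R_0}$, shows that the exponent $\sqrt{R_0}/2$ is sharp.)

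The main obstacle is regularity: the lemma assumes neither non-positive curvature nor continuity of $h$, so $b_v$ is a priori only $C^1$ and its level sets need not be $C^2$ (Lemma \ref{lem:contasymp} gives $C^2$ only under continuity of $h$). Consequently the differentiability of $P_{\mathcal H}$ and the pointwise Riccati relation used above both require justification. I would handle this either by appealing to $C^{1,1}$-regularity of $b_v$ (where available in this generality), in which case $\Hess b_v$ exists almost everywhere with norm $\le\sqrt{R_0}$, $f$ is Lipschitz, $\gamma$ is rectifiable, and the Gronwall estimate survives in integrated form; or, to be safe, by a limiting argument, proving the length bound for the smooth geodesic spheres whose shape operators are governed by the Jacobi tensors $U_{v,r}$ — which converge to $U_v$ and are controlled by the same lower curvature bound — and then passing to the limit $r\to\infty$, using that the inequality is non-strict.
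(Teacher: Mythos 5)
Your proof is correct and is essentially the paper's own argument: both take the minimizing geodesic from $x$ to $y$, project it onto $\mathcal{H}$ along the Busemann-gradient (asymptotic geodesic) foliation, use $|b_v \circ \sigma| \le d(x,y)/2$ as the source of the exponent, and control the horospherical component of the velocity by the $\sqrt{R_0}$-bound on horosphere shape operators combined with a Gronwall argument. The only difference is the direction of presentation: the paper establishes the lower bound $\Vert S_v(r)w \Vert \ge e^{-|r|\sqrt{R_0}} \Vert w \Vert$ for stable Jacobi tensors and bounds $d(x,y) = \ell(c)$ from below by $e^{-d(x,y)\sqrt{R_0}/2} d_{\mathcal{H}}(x,y)$, which is exactly your upper bound $\Vert d\Phi_\tau(W) \Vert \le e^{\sqrt{R_0}|\tau|} \Vert W \Vert$ on the flow differential read in inverse (and your regularity caveat is apt, since the paper's proof silently assumes the same differentiability of the projected curve).
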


\begin{proof}
Let $W^s(v) \subset SX$ be the unique stable manifold satisfying $\mathcal{H} = \pi(W^s(v))$.
For $x,y \in \mathcal{H}$ and $c: [0,1] \to X$ be the geodesic satisfying $c(0) = x$ and $c(1) = y$. Let $c_0 = P_{\mathcal{H}} \circ c: [0,1] \to \mathcal{H}$ be the orthogonal projection of $c$ into the horosphere $\mathcal{H}$ and $\gamma_0: [0,1] \to W^s(v)$ be the lift of $c_0$, that is, $c_0 = \pi \circ \gamma_0$. Then we can write $c(s) = c_{\gamma_0(s)}(r(s))$  with a smooth function $r: [0,1] \to \mathbb{R}$ satisfying $|r(s)| \le d(x,y)/2$ for all $s \in [0,1]$, and we have
\begin{align*}
d(x,y) &\ge \int_0^1 \Vert \dot c(s_0) \Vert ds_0 \\
&\ge \int_0^1 \left\Vert \frac{d}{ds}\vert_{s=s_0} c_{\gamma_0(s)}(r(s)) \right\Vert ds_0 \\
&= \int_0^1 \left( \Vert S_{\gamma_0(s_0)}(r(s_0)) (P^{\gamma_0(s_0)}_{r(s_0)} \dot c_0(s_0)) \Vert^2 + \Vert \dot c_{\gamma_0(s_0)}(r(s_0))\Vert ^2 \right)^{1/2} ds_0 \\
&\stackrel{(*)}{\ge} \int_0^1 \left( e^{-2 r(s_0) \sqrt{R_0}} \Vert P^{\gamma_0(s_0)}_{r(s_0)} \dot c_0(s_0) \Vert^2 
+ \Vert \dot c_{\gamma_0(s_0)}(r(s_0)) \Vert^2  
\right)^{1/2} ds_0 \\
&\ge e^{-d(x,y)\frac{\sqrt{R_0}}{2}} \int_0^1 \Vert \dot c_0(s_0) \Vert ds_0 \\
&\ge e^{-d(x,y)\frac{\sqrt{R_0}}{2}} d_{\mathcal{H}}(x,y).
\end{align*}
The above estimate $(*)$ is a consequence of the following inequality:
\begin{equation} \label{eq:S_vrest} 
\Vert S_v(r)(w) \Vert \ge e^{-r \sqrt{R_0}} \Vert w \Vert. 
\end{equation}
In the case $r \ge 0$, this inequality follows from (2.6) in Lemma I.2.16 in \cite{K0}. In the case $r < 0$, it suffices to prove \eqref{eq:S_vrest} for $\Vert w \Vert =1$. We consider
$$ f(r) = \Vert S_v(r) w \Vert^2 > 0. $$
Differentiation and Lemma I.2.17 in \cite{K0} yield
$$
f'(r) = 2 \langle \left( S_v'(r) S_v^{-1}(r) \right) S_v(r)w, S_v(r) w\rangle  \\
\le 2 \sqrt{R_0} \langle S_v(r)w, S_v(r)w \rangle = 2 \sqrt{R_0} f(r).
$$
This implies
$$ \frac{d}{dt} \log f(t) \le 2 \sqrt{R_0}, $$
and therefore
$$ - \log f(r) = \int_r^0 \frac{d}{dt} \log f(t) dt \le 2 r \sqrt{R_0}, $$
and we obtain
$$ \Vert S_v(r) w \Vert = f^{1/2}(r) \ge e^{-r \sqrt{R_0}}, $$
finishing the proof of \eqref{eq:S_vrest} for the case $r < 0$.
\end{proof}

Now we are ready to prove continuity of $(p,v) \mapsto - \grad b_v(p)$ in $X \times SX_\alpha$ for $\alpha > 0$.
We start with sequences $v_n \to v$ in $SX_\alpha$ with $p = \pi(v) = \lim p_n$, $p_n= \pi(v_n)$ and $q_n \to q$ in $X$. Let $w_n = - \grad b_{v_n}(q_n) \in SX_\alpha$ and $w = - \grad b_v(q) \in SX_\alpha$. We want to show that $w_n \to w$. Let $w_{n_j}$ be a subsequence converging to a vector $\tilde w \in SX_\alpha$ with $\pi(\tilde w) =q$. Let $t \ge 0$ be fixed. By convergence, we can find $j$ large enough such that
$$ d(c_{\tilde w}(t),c_{w_{n_j}}(t)), d(c_{v_{n_j}}(t),c_v(t)) \le 1  $$
and
$$ d(p_{n_j},q_{n_j}) < d(p,q)+1. $$
This implies that
\begin{align*}
d(c_{\tilde w}(t),c_v(t)) &\le d(c_{\tilde w}(t),c_{w_{n_j}}(t)) + d(c_{v_{n_j}}(t),c_{w_{n_j}}(t)) + d(c_{v_{n_j}}(t),c_v(t)) \\ 
&\le 1 + d(p_{n_j},q_{n_j}) + a_2 e^{-\frac{\rho}{2} t} d_{\mathcal{H}_{n_j}}(p_{n_j},P_{\mathcal{H}_{n_j}}(q_{n_j})) + 1 \\ &\le 2 + 
d(p_{n_j},q_{n_j}) + 2 a_2 e^{-\frac{\rho}{2} t} 
e^{2d(p_{n_j},q_{n_j}) \frac{\sqrt{R_0}}{2} } d(p_{n_j},q_{n_j}) \\
&\le 2 + (1 + 2a_2 
e^{(d(p,q)+1) \sqrt{R_0} }) (d(p,q)+1),
\end{align*}
with $\mathcal{H}_{n_j} = \pi(W^s(v_{n_j}))$, since 
$$ d(p_{n_j},P_{\mathcal{H}}(q_{n_j})) \le d(p_{n_j},q_{n_j}) + d(q_{n_j},P_{\mathcal{H}}(q_{n_j})) \le 2 d(p_{n_j},q_{n_j}). $$ 
The right hand side is independent of $t \ge 0$, and the geodesic rays $c_{\tilde w}$ and $c_v$ are therefore asymptotic.
Since $t \mapsto d(c_w(t),c_v(t))$ remains also bounded for all $t \ge 0$ and $w,\tilde w \in SX_\alpha$, we conclude that $\tilde w = w$, by Proposition \ref{prop:divgeod}. This implies $w_n \to w$, completing the continuity proof.

\begin{remark} The above arguments can be also used to prove continuity of the map $(p,v) \mapsto - \grad b_v(p)$ on $X \times SX$ for simply connected Riemannian manifolds $(X,g)$ without conjugate points, satisfying the following two properties:
\begin{itemize}
\item \emph{Divergence of geodesic rays}, that is, we have for every pair $v,w \in SX$ of different vectors with $p=\pi(v)=\pi(w)$,
$$ d(c_v(t),c_w(t)) \to \infty \quad \text{as $t \to \infty$.} $$
\item \emph{Asymptotic geodesics for stable manifolds}, that is, we have a function $f: [0,\infty) \to [0,\infty)$ such that, for every $v \in SX$ and $w \in W^s(v)$, 
$$ d(c_v(t),c_w(t)) \le f(d(\pi(v),\pi(w))) \quad \text{for all $t \ge 0$.} $$
\end{itemize}
\end{remark}

\subsection{Step 4: $\det D$ and $\trace D$ are both globally constant} \label{subsec:step4}

As in the previous subsection, we assume that $(X,g)$ is a rank one simply connected manifold with invariant horospherical mean curvature function, satisfying $\Vert R \Vert \le R_0$ and $\Vert \nabla R \Vert \le R_0'$. Let $\alpha > 0$ and $v_0 \in SX_\alpha$. Of central importance in this subsection is the map
$$ F=F_{v_0}: b_{v_0}^{-1}(0) \times X \to SX, \quad F(x,y) = - \grad b_{\grad b_{-v_0}(x)}(y). $$
Below, we show that $F$ is continuous and injective. For the injectivity,
the following result, which is similar to Proposition \ref{prop:divgeod}, is used.

\begin{proposition}[Divergence of geodesic rays on unstable leaves in $SX_\alpha$]
  \label{prop:divgeod2}
  Let $\alpha >0$, $v \in SX_\alpha$ and $w \in W^u(v)$ be two unit vectors with $\pi(v) \neq \pi(w)$. Then we have
  \begin{equation} \label{eq:dcvcw2} 
  d(c_v(t),c_w(t)) \to \infty \quad \text{as $t \to \infty$.} 
  \end{equation}
\end{proposition}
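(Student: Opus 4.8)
The plan is to prove Proposition \ref{prop:divgeod2} directly, in close analogy with Proposition \ref{prop:divgeod}, the one change being that the "center" from which the rays diverge is pushed to the backward endpoint at infinity: instead of geodesic spheres around a common footpoint, I would use the expanding family of unstable horospheres $\mathcal{H}_t = b_{-v}^{-1}(t)$. The starting observation is that both $c_v$ and $c_w$ are orthogonal trajectories of this family. Indeed, since $\grad b_{-v}(\pi(v)) = v$ and $\grad b_{-v}(\pi(w)) = w$ with $\pi(v),\pi(w) \in b_{-v}^{-1}(0)$, both $c_v$ and $c_w$ are integral curves of the unit gradient field $\grad b_{-v}$, so $b_{-v}(c_v(t)) = b_{-v}(c_w(t)) = t$ for all $t$. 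In particular $c_v(t)$ and $c_w(t)$ both lie on $\mathcal{H}_t$, and the whole configuration is the unstable analogue of Proposition \ref{prop:divgeod}.

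With this in hand I would run the same dichotomy as in Proposition \ref{prop:divgeod}. Fix $t$ large and let $\beta$ be an arbitrary curve from $c_v(t)$ to $c_w(t)$. If $\beta$ enters the sublevel set $\{ b_{-v} \le t/2 \}$, then, because $\Vert \grad b_{-v} \Vert \equiv 1$, the function $b_{-v}\circ\beta$ drops from $t$ to at most $t/2$ and returns to $t$, so its total variation is at least $t$ and hence $\ell(\beta) \ge t$. Otherwise $\beta$ stays in $\{ b_{-v} \ge t/2 \}$; decomposing its velocity into the radial direction $\grad b_{-v}$ and the horospherical direction, I would bound $\ell(\beta)$ from below by the horospherical displacement required to pass from the $c_v$-trajectory to the $c_w$-trajectory. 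Since all the relevant gradient directions lie in $W^{0u}(v) \subset SX_\alpha$, the lower bound $D \ge \rho\cdot\Id$ from \eqref{eq:Dlowbod} makes the unstable horospheres expand at least like $e^{\rho s/2}$ as the radius $s$ increases; this is the unstable counterpart, via $W^u(v) = -W^s(-v)$, of the contraction estimate \eqref{eq:btfunc} behind Proposition \ref{prop:asgeod}. Staying at radius $s \ge t/2$, the horospherical part of $\beta$ then has length at least $c\,e^{\rho t/4}\,d_{\mathcal{H}_0}(\pi(v),\pi(w)) \ge c\,e^{\rho t/4}\,d(\pi(v),\pi(w))$, with $d(\pi(v),\pi(w)) > 0$ by hypothesis. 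In both cases $\ell(\beta)\to\infty$ as $t\to\infty$, and since $\beta$ was arbitrary this gives \eqref{eq:dcvcw2}.

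The hard part is the quantitative expansion step in the second alternative: making precise, uniformly over $\{ b_{-v} \ge t/2 \}$ and for connecting curves whose radius $b_{-v}$ varies along $\beta$, that the ambient length dominates an exponentially large multiple of the fixed base separation $d_{\mathcal{H}_0}(\pi(v),\pi(w))$. This amounts to transporting the machinery of Propositions \ref{prop:asgeod} and \ref{prop:SLbeta-estimate} (together with \eqref{eq:S_vrest}) from the stable to the unstable setting and controlling the interplay between the radial drift and the horospherical stretching; by contrast, the purely radial case and the identification of $c_v,c_w$ as $\grad b_{-v}$-trajectories are routine. It is worth noting that this single argument simultaneously excludes the possibility that $c_v$ and $c_w$ are bi-asymptotic, which is precisely the configuration that would have to occur for \eqref{eq:dcvcw2} to fail; thus no separate rigidity argument ruling out bi-asymptotic geodesics in $SX_\alpha$ is needed.
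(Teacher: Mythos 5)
Your proposal is correct and takes essentially the same route as the paper's proof: the same dichotomy (curves that approach the horosphere $\pi W^u(v)=b_{-v}^{-1}(0)$ have length at least $t$, curves that stay in $\{b_{-v}\ge t/2\}$ are written in horospherical coordinates $\beta(s)=c_{\gamma(s)}(r(s))$ with $\gamma(s)\in W^u(v)$), followed by discarding the radial component and using the exponential expansion of unstable Jacobi tensors guaranteed by $D\ge\rho\cdot\Id$ on $W^{0u}(v)\subset SX_\alpha$. The only difference is how that expansion estimate is sourced: the paper quotes it directly from Bolton \cite{Bol-79}, while you propose obtaining it by duality from the stable contraction estimates of \cite{KP}, which also works.
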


\begin{proof} Let $v,w \in W^u(v) \subset SX_\alpha$ be two unit vectors with $\pi(v) \neq \pi(w)$ as in the proposition. Let $t > 0$ be arbitrary and $\beta: [0,1] \to X$ be a curve with $\beta(0) = c_v(t)$ and $\beta(1) = c_w(t)$. If there exists $s \in (0,1)$ with $d(\beta(s),\pi W^u(v))\le t/2$, then $\ell(\beta) \ge t$. If $\beta$ satisfies $d(\beta([0,1]),\pi W^u(v)) \ge t/2$, then we can write
    $$ \beta(s) = c_{\gamma(s)}(r(s)) $$
    with $r: [0,1] \to [t/2,\infty)$ and $\gamma: [0,1] \to W^u(v)$, $\gamma(0)=v$, $\gamma(1)=w$. 
  We have
    $$ \ell(\beta) = \int_0^1 \Vert \dot \beta(s) \Vert ds \ge \int_0^1 \Vert U_{\gamma(s)}(r(s))(P_{r(s)} (\pi \circ \gamma)'(s)) \Vert ds \ge c e^{\frac{\rho t} {2}} \ell(\pi \circ \gamma) \ge c e^{\frac{\rho t}{2}} d(\pi(v),\pi(w)) $$
with a suitable constant $c > 0$ (see \cite[bottom of p. 110]{Bol-79}). Here $P_r$ denotes parallel transform along $c_{\gamma(s)}$. Since $\beta$ was an arbitrary curve connecting $c_v(t)$ and $c_w(t)$, this implies that we have \eqref{eq:dcvcw2}.
\end{proof}

We verify the following properties of $F$:
\begin{itemize}
\item[(i)] $F$ is continuous on $b_{v_0}^{-1}(0) \times X$: For $(x_n,y_n) \in b_{v_0}^{-1}(0)\times X$ with $(x_n,y_n) \to (x,y) \in b_{v_0}^{-1}(0) \times X$, we have $w_n = \grad b_{-v_0}(x_n) \to w = \grad b_{-v_0}(x)$ since $x_n \to x$ and $\grad b_{-v_0}$ is $C^1$. It follows from the previous subsection that
$$ F(x_n,y_n) = - \grad b_{w_n}(y_n) \to - \grad b_w(y) = F(x,y), $$
since $y_n \to y$. This proves continuity of $F$.
\item[(ii)] $F$ is injective: Assume $F(x_1,y_1) = F(x_2,y_2)$, that is,
$$ \grad b_{\grad b_{-v_0}(x_1)}(y_1) = \grad b_{\grad b_{-v_0}(x_2)}(y_2). $$
Since these are unit vectors with footpoints $y_1, y_2 \in X$, we conclude that $y_1=y_2$. Moreover, the geodesic rays with initial vectors $\grad b_{-v_0}(x_1), \grad b_{-v_0}(x_2) \in W^{0u}(v_0)$ are asymptotic. Since $x_1, x_2 \in b_{v_0}^{-1}(0)$, both vectors lie in $W^u(v_0)$, and Proposition \ref{prop:divgeod2} implies that $x_1 = x_2$. 
\end{itemize}

Since $(X,g)$ has rank one, there exists $\tilde v \in SX$ with $\delta = \det D(\tilde v) > 0$. We consider the non-empty closed subset
$$ W = \{ v \in SX: \det D(v) = \delta \} $$
of $SX$. This set is also open in $SX$ by the following argument: Let $v_0 \in W$. Then the above map $F_{v_0}: b_{v_0}^{-1}(0) \times X \to SX$ is injective and continuous and its image ${\rm{im}} F_{v_0}$ lies in $W$ by Subsections \ref{subsec:detconstgeod} and \ref{subsec:detconststable}. By Brouwer's Domain Invariance \cite{Brou}, the image ${\rm{im}} F_{v_0}$ contains an open neighbourhood of $v_0$ in $SX$. Since $W$ is a non-empty open and closed subset of the connected set $SX$, we have $W = SX$ and $\det D$ is constant on all of $SX$. In particular, all unit vectors in $SX$ are rank one. 

Let $\delta' = \trace D(\tilde v)$ and
$$ W' = \{ v \in SX: \trace D(v) = \delta' \}. $$
The same arguments, together with the fact that all unit vectors of $SX$ are rank one, show that $W' = SX$. This completes the steps in the proof of Theorem \ref{thm:main1}. \qed

\smallskip

We complete this subsection with a result that will be relevant in the next Section. 

\begin{thm} \label{thm:metatheorem} Let  $(X,g)$ be a simply connected rank one manifold with invariant horospherical mean curvature function and $\Vert R \Vert \le R_0$ and $\Vert \nabla R \Vert \le R_0'$ for some $R_0,R_0' > 0$. Let $f: SX \to \R$ be a continuous function which is reversible ($f(v)=f(-v)$ for all $v \in SX)$, invariant under the geodesic flow ($f(\phi^t(v)) = f(v)$ for all $v \in SX$ and all $t \in \R)$, and uniformly continuous on all stable manifolds in the following sense: for every $\epsilon >0$ there exists $\epsilon' >0$ such that, for all $v \in SX$ and $v' \in W^s(v)$ with $d_{\mathcal{H}}(\pi(v),\pi(v')) \le \epsilon'$, where $d_{\mathcal{H}}$ denotes the intrinsic distance in the horosphere $\mathcal{H} = \pi(W^s(v))$, we have
$$ |f(v')-f(v)| \le \epsilon. $$
Then $f$ is globally constant.
\end{thm}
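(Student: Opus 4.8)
The plan is to follow the four-step strategy used to prove Theorem \ref{thm:main1}, with the abstract function $f$ playing the role previously played by $\det D$ and $\trace D$. First I would invoke Theorem \ref{thm:main1} itself (more precisely, the conclusion of its Step 4): under the stated hypotheses every unit vector of $SX$ is rank one and $\det D$ is a positive constant, so there is an $\alpha > 0$ with $SX = SX_\alpha$. Consequently the uniform lower bound \eqref{eq:Dcrucial} holds on all of $SX$, and the contraction estimates of Proposition \ref{prop:SLbeta-estimate} are available along every stable leaf, not merely along the neighbourhoods $V^s$.

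Next I would show that $f$ is constant on each stable leaf $W^s(v)$. Given $v' \in W^s(v)$, join it to $v$ by a $C^1$-curve $\gamma$ in $W^s(v)$ and set $\beta_t = \pi(\phi^t \circ \gamma)$, a curve in the horosphere $\mathcal{H}(t) = \pi(W^s(\phi^t v))$ running from $c_v(t)$ to $c_{v'}(t)$. By \eqref{eq:btlength} and \eqref{eq:btfunc} its length, and hence the intrinsic distance $d_{\mathcal{H}(t)}(c_v(t),c_{v'}(t)) \le \ell(\beta_t) \le a e^{-\rho t/2}\ell(\beta_0)$, tends to $0$ as $t \to \infty$. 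Fixing $\epsilon > 0$ and the associated $\epsilon'$ from the uniform-continuity hypothesis, I choose $t$ so large that this intrinsic distance is at most $\epsilon'$; since $\phi^t v' \in W^s(\phi^t v)$, the hypothesis yields $|f(\phi^t v) - f(\phi^t v')| \le \epsilon$, and invariance under the geodesic flow turns this into $|f(v) - f(v')| \le \epsilon$. As $\epsilon$ is arbitrary, $f(v) = f(v')$. Reversibility of $f$ together with the relation $W^u(v) = -W^s(-v)$ then gives constancy on every unstable leaf, and a further application of flow invariance upgrades these to constancy on the weak leaves $W^{0s}(v)$ and $W^{0u}(v)$.

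Finally I would promote this to global constancy exactly as in Step 4. Fix $v_0 \in SX$ and consider $W = \{v \in SX : f(v) = f(v_0)\}$, which is non-empty and, by continuity of $f$, closed. To see that $W$ is open I reuse the map $F_{v_0}: b_{v_0}^{-1}(0) \times X \to SX$, which was already shown there to be continuous and injective (these properties are purely geometric and independent of $f$). Since $F_{v_0}(x,y)$ lies in the weak stable leaf $W^{0s}(u)$ of the vector $u = \grad b_{-v_0}(x) \in W^{0u}(v_0)$, and $f$ is constant on all such leaves, one gets $\mathrm{im}\, F_{v_0} \subset W$; Brouwer's Domain Invariance \cite{Brou} then shows $\mathrm{im}\, F_{v_0}$, and hence $W$, contains an open neighbourhood of $v_0 = F_{v_0}(\pi(v_0),\pi(v_0))$. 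As $SX$ is connected, $W = SX$ and $f$ is constant.

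The routine parts of this argument are direct transcriptions of the earlier steps, and the only genuinely new point is the second paragraph: one must check that the uniform-continuity hypothesis, phrased in terms of the \emph{intrinsic} horospherical distance $d_{\mathcal{H}}$, meshes correctly with the exponential contraction of Proposition \ref{prop:SLbeta-estimate}, which bounds precisely the length of the footpoint curve $\beta_t$ inside $\mathcal{H}(t)$. I expect this interface --- together with the observation that no separate local-to-global step is needed on the leaves, because $\ell(\beta_0)$ is finite for any $C^1$ connecting curve --- to be where the care is required; the domain-invariance finish then carries over essentially verbatim.
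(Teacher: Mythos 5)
Your proposal is correct and follows essentially the same route as the paper: constancy of $\det D$ from Step 4 gives the uniform bound $D \ge \rho\,\Id$ on all of $SX$, Proposition \ref{prop:SLbeta-estimate} plus flow invariance and the uniform-continuity hypothesis give constancy of $f$ on stable leaves, reversibility extends this to unstable and weak leaves, and the map $F_{v_0}$ with Brouwer's Domain Invariance yields global constancy. Your explicit remark that the contraction estimate bounds the \emph{intrinsic} horospherical distance $d_{\mathcal{H}(t)}$ (since $\beta_t$ lies inside the horosphere) is exactly the interface the paper uses, stated slightly more carefully than in the paper's own wording.
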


\begin{proof}
    Let $v_0 \in SX$, $\delta_0 = f(v_0)$ and
    $$ W_0 = \{ v \in SX: f(v) = \delta_0 \}. $$
    Since $f$ is continuous, we know that $W_0$ is non-empty and closed in $SX$. It remains to show that $W_0$ is open to prove that we have $W_0 = SX$ and that $f$ is globally constant.
    
    To this end, we first show that $f$ is constant on all stable manifolds $W^s(v)$, that is, we have for all $v'\in W^s(v)$, 
    $$ |f(v')-f(v)| \le \epsilon $$
    for any $\epsilon >0$. We already know from the above discussion that $\det D: SX \to \R$ is constant with positive $\alpha = \det D$ and, therefore
    $$ D(v) \ge \rho \cdot \Id \quad \text{for all $v \in SX$,} $$
    with $\rho$ related to $\alpha$ as in \eqref{eq:alpharho}. Let $\gamma: [0,1] \to W^s(v)$ be a $C^1$-curve with $\gamma(0) = v$, $\gamma(1) = v'$, $\beta_t = \pi(\phi^t\gamma) \subset W^s(\phi^t(v)$ and $\beta = \beta_0$. Then Proposition \ref{prop:SLbeta-estimate} yields
    $$ d(\pi(\phi^t(v')),\pi(\phi^t(v)) \le \int_0^1 \Vert \dot \beta_t(s) \Vert ds \le a e^{-\frac{\rho}{2}t} \int_0^1 \Vert \dot \beta(s) \Vert ds, 
    $$
    and we can choose $t > 0$ large enough that 
    $$ d(\pi(\phi^t(v')),\pi(\phi^t(v)) \le \epsilon'. $$
    Then we obtain from the $\phi^t$-invariance of $f$ that
    $$ |f(v') - f(v)| = |f(\phi^t(v')) - f(\phi^t(v))| \le \epsilon, $$
    finishing this part of the proof.

    Since $f$ is reversible and constant on all stable manifolds, $f$ is also constant on all weakly stable and unstable manifolds, and we can apply Brouwer's Domain Invariance, as above, to conclude that the set $W_0$ is open in $SX$. This finishes the proof of the theorem.
\end{proof}

\subsection{$3$-dimensional manifolds with invariant horospherical mean curvature functions}

The classification of $3$-dimensional manifolds with invariant horospherical mean curvature functions is an interesting open problem. We have the following partial result.

\begin{proposition} \label{prop:3dimrankone}
    Let $(M,g)$ be a $3$-dimensional rank one manifold with invariant horospherical mean curvature function, satisfying $\Vert R \Vert \le R_0$ and $\Vert \nabla R \Vert \le R_0'$ for some constants $R_0,R_0'>0$. Then $M$ has constant curvature. 
    
    Moreover, if $M$ is compact, the curvature assumptions are automatically satisfied.
\end{proposition}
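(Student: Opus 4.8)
The plan is to treat the compact addendum separately and to reduce the main assertion, using the results already established in this section, to the single equality $\det D\equiv h_0^2$. For the addendum, note that if $M$ is compact then $R$ and $\nabla R$ are continuous tensor fields on $M$, hence bounded on the compact bundle $SM$; thus the hypotheses $\Vert R\Vert\le R_0$ and $\Vert\nabla R\Vert\le R_0'$ hold automatically and the compact case reduces to the general statement. (One may also argue directly that the second Corollary following Theorem \ref{thm:main1} makes a compact $M$ of this type locally symmetric of negative curvature, and in dimension $3$ the only such spaces are quotients of $\Hh^3$, of constant curvature.)

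\emph{Reduction.} Since every notion involved is isometry invariant, assume $X$ simply connected. By Theorem \ref{thm:main1}, $X$ is asymptotically harmonic, so $h\equiv h_0$ is a positive constant; moreover Subsection \ref{subsec:step4} gives that $\det D$ is globally constant, that every unit vector has rank one (so $D(v)>0$), and that the geodesic flow is Anosov. As $\dim X=3$, each $U(v)$ is a symmetric endomorphism of the $2$-plane $v^\bot$ with $\trace U(v)=h_0$, so \eqref{eq:dettracerelation} (with $n=3$) reads $\det D(v)\le h_0^2$. Because $\det D$ and $h$ are both constant on $SX$, equality in \eqref{eq:dettracerelation} holds for one $v$ if and only if it holds for all $v$; and then that same Proposition yields $R_{\phi^t(v)}=-\tfrac{h_0^2}{4}\Id$ for every $v$ and $t$, i.e.\ constant sectional curvature $-h_0^2/4$. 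Hence it suffices to prove $\det D\equiv h_0^2$, equivalently that the (constant) eigenvalues $h_0\pm\nu$ of $D(v)$ satisfy $\nu=0$, equivalently that all horospheres are totally umbilic.

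\emph{Key computation and the hard step.} Suppose, for contradiction, that $\nu>0$. Working along a fixed geodesic in a parallel orthonormal frame of $c_v^\bot$ and writing $U_0,S_0,D_0$ and $H=-\tfrac12(U+S)$ for the relevant trace-free operators, the Riccati equation \eqref{eq:Ricatti} for $U$ and its analogue for $S$ give, on the one hand, $\trace U^2=-\operatorname{Ric}(v,v)$ (where $\operatorname{Ric}(v,v)=\trace R_v$, using $\trace U=h_0$), and on the other hand, after subtracting the two equations, the evolution $D_0'=2h_0H$, matching $D'=HD+DH$ from Subsection \ref{subsec:detconstgeod}. Since $\det D$ and $\trace D$ are constant we have $\trace D_0^2=2\nu^2$ constant; differentiating this once yields $\trace(D_0H)=0$, and differentiating again, using the trace-free evolution $H'=\tfrac{h_0}{2}D_0+R_0$ (with $R_0$ the trace-free part of the Jacobi operator), produces the pointwise identity
\begin{equation*}
\trace\bigl(D_0(v)\,R_0(v)\bigr)=-2h_0\,\trace H(v)^2-h_0\nu^2\le -h_0\nu^2<0\qquad(v\in SX).
\end{equation*}
Thus $R_0$ is everywhere bounded away from $0$, with a component of fixed sign along the distinguished eigendirection of $D_0$, so $X$ is nowhere Einstein. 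The main obstacle is to convert this direction-locked, uniformly nonzero deviation from the Einstein condition into a contradiction. I expect to do so dynamically: the Anosov geodesic flow is topologically transitive and its orbits recur, while $\Vert\nabla R\Vert\le R_0'$ forces the Jacobi operator, and hence $R_0$, to vary only slowly along each orbit; the equation $D_0'=2h_0H$ makes the eigenframe of $D_0$ rotate at a rate governed by $H$, so maintaining the rigid identity above along a recurrent orbit should be possible only if $H\equiv0$ and $\nu=0$. Equivalently, and perhaps more cleanly, one shows that $\trace U(v)^2$ is independent of $v$, i.e.\ that $X$ is Einstein, whereupon $\dim X=3$ forces constant curvature; this Einstein step is where I expect the real work to lie.
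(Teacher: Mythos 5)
Your reduction is sound, but the proof has a genuine gap, and it is exactly the step you flag yourself: you never prove $\det D\equiv h_0^2$ (equivalently that $\nu=0$, i.e.\ that $X$ is Einstein / horospheres are umbilic). Everything before that point is correct and matches what is needed: reduce to the simply connected case, invoke Theorem \ref{thm:main1} to get $h\equiv h_0>0$, observe that equality in \eqref{eq:dettracerelation} for $n=3$ forces $R_{\phi^t(v)}=-\tfrac{h_0^2}{4}\Id$ and hence constant curvature, and derive the Riccati identities $D_0'=2h_0H$, $\trace(D_0H)=0$. But the concluding ``dynamical'' argument is only a hope (``I expect'', ``should be possible only if $H\equiv0$''). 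Nothing you write rules out a non-Einstein manifold satisfying your pointwise identity $\trace\bigl(D_0(v)\,R_0(v)\bigr)\le-h_0\nu^2$ along every orbit (note, incidentally, that your trace-free $R_0$ clashes notationally with the curvature bound $R_0$); recurrence of an Anosov flow together with $\Vert\nabla R\Vert\le R_0'$ does not by itself prevent the eigenframe of $D_0$ from rotating compatibly with the trace-free part of the Jacobi operator. So, as written, the proposal does not establish the proposition.

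The missing step is precisely where the paper leans on the literature: after reducing via Theorem \ref{thm:main1} to a $3$-dimensional asymptotically harmonic manifold with $h>0$, the paper simply cites Schroeder--Shah \cite{SchSh-08}, whose theorem is exactly that such $3$-manifolds have constant curvature; that result (building on Heber--Knieper--Shah \cite{HKSh-07}) is the real content here, and its proof requires a substantive argument rather than the recurrence heuristic. Your treatment of the compact addendum (compactness of $SM$ gives the bounds on $\Vert R\Vert$ and $\Vert\nabla R\Vert$) is fine and agrees with the paper's intent. To repair the main part, either reproduce the analysis of \cite{SchSh-08}/\cite{HKSh-07}, or end your argument by invoking \cite{SchSh-08} at the point where you have reduced to the Einstein/umbilic statement.
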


\begin{proof}
    Let $(M,g)$ be a manifold as in the proposition. Without loss of generality, we can assume that $M$ is simply connected. Theorem \ref{thm:main1} implies that $M$ is asymptotically harmonic with $h > 0$. The statement of the proposition follows now from \cite{SchSh-08}. 
\end{proof}

\section{Rank one D'Atri spaces without conjugate points}
\label{sec:rankoneDAtri}

This section is concerned with D'Atri spaces. We first prove our main Theorem \ref{thm:main2} from the introduction, and consider then the special case of homogeneous D'Atri spaces and $3$-dimensional D'Atri spaces without conjugate points.

\subsection{Proof of Theorem \ref{thm:main2}}

In this subsection, we prove Theorem \ref{thm:main2} from the introduction, namely, that rank one D'Atri spaces without conjugate points, continuous horospherical mean curvature functions and global Riemann curvature bounds are harmonic manifolds. Both types of manifolds are defined via orthogonal Jacobi tensors $A_v(t) \in {\rm{End}}(\phi^t(v)^{\bot})$ along geodesics $c_v$ with the initial conditions $A_v (0) = 0, A_v'(0) = \Id$.


\begin{definition}[{see \cite[Definition 4.2]{He2}}]
Let $(M,g)$ be a Riemannian manifold. 
\begin{itemize}
\item[(i)] $M$ is called \emph{D'Atri space} if
$
\det A_v(t) = \det A_{-v}(t), 
$
holds for all $t\geq 0$ and all unit vectors $v \in SM$. 
\item[(ii)] $M$ is called a \emph{harmonic manifold} if there exists a function $f \in C^\infty([0,\infty))$ such that $\det A_v(t) = f(t)$ for all $t \ge 0$ and all unit vectors $v \in SM$.  
\end{itemize}
\end{definition}


Geometrically, D'Atri spaces are those in which the geodesic inversions at all points are volume preserving. Harmonic manifolds can be described as those spaces in which geodesic spheres of the same radius have constant mean curvature. The function $f \in C^\infty([0,\infty))$ in a harmonic manifold agrees -- up to a constant multiplicative factor -- with the volume density of spheres. Note that D'Atri spaces are real analytic and that $v \mapsto \det A_v(t)$ is invariant under the geodesic flow.

\begin{proposition}[see {\cite[Lemma 4.6]{He2}}] \label{prop:datrigeodaetinv}
Let $(M,g)$ be a D'Atri space. Then $M$ is real analytic and the map $v \mapsto A_v(t)$ is invariant under the geodesic flow for all $t > 0$. 
\end{proposition}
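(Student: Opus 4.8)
The proposition has two parts, and I would treat them in order, since the first feeds the second. For the real analyticity, the plan is to reduce to the classical structure theory of D'Atri spaces. The condition $\det A_v(t) = \det A_{-v}(t)$ says precisely that the local geodesic symmetries are volume preserving, equivalently that the volume density $\exp_p(w) \mapsto \det A_{w/|w|}(|w|)/|w|^{n-1}$ is invariant under $w \mapsto -w$ in normal coordinates at $p$. Expanding this identity as a power series in $t$ and matching the odd-order coefficients produces the infinite family of odd Ledger conditions, a sequence of universal pointwise curvature identities. The real analyticity of spaces satisfying these conditions is a known fact, which I would invoke from \cite{KPV,He2} rather than reprove, as it is standard and independent of the no-conjugate-points hypothesis.

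For the flow invariance I would first isolate the two symmetries available for the determinant. The first is the D'Atri condition itself, $\det A_v(t) = \det A_{-v}(t)$. The second is a general duality valid on any manifold without conjugate points, which I would derive from the constancy of the Wronskian \eqref{eq:Wronski}: applying $\Omega(\cdot,\cdot)=\mathrm{const}$ to $A_v$ and to the reparametrized reversed tensor $\tau \mapsto A_{-\phi^t v}(t-\tau)$ along $c_v$, and evaluating at the two endpoints $\tau=0$ and $\tau=t$ (where one of the two tensors vanishes and the other has derivative $\pm\Id$), yields the adjoint identity $A_{-\phi^t v}(t) = A_v(t)^{*}$, and in particular $\det A_{-\phi^t v}(t) = \det A_v(t)$. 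Combining the two symmetries immediately gives the ``matching time'' invariance $\det A_{\phi^t v}(t) = \det A_v(t)$ for all $v$ and $t$, and, more generally, that $s \mapsto \det A_{\phi^s v}(t)$ is periodic in $s$ with period $t$.

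The main obstacle is that these formal symmetries do \emph{not} by themselves yield the full flow invariance $\det A_{\phi^s v}(t) = \det A_v(t)$ for all $s$: periodicity with period $t$ for every $t>0$ is strictly weaker, and the geodesic symmetry of a general (possibly non-homogeneous) D'Atri space is only volume preserving, not an isometry, so it cannot be used to conjugate the Jacobi tensor directly. This is exactly where the analytic structure must enter. Concretely, if one differentiates the matching-time identity $\det A_{\phi^\tau w}(\tau) = \det A_w(\tau)$ in $\tau$ and passes to the shape operator $\sigma_v(t) = A_v'(t) A_v(t)^{-1}$, which satisfies the Riccati equation \eqref{eq:Ricatti} and obeys $\tfrac{d}{dt}\log\det A_v(t) = \trace \sigma_v(t)$, one finds that the flow derivative $\tfrac{d}{ds}\big|_{s=0}\det A_{\phi^s v}(t)$ is governed by the difference of the endpoint quantities $\det A \cdot \trace \sigma$ taken at two points of the geodesic. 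The task is then to show this difference vanishes. I would attempt this by feeding the reflection symmetry applied at \emph{every} footpoint along $c_v$ into the analytic (Ledger) expansion and integrating the resulting pointwise identities along the geodesic; real analyticity is what upgrades these local, Taylor-level relations to the global flow-invariant statement. I expect this upgrade from the formal symmetries to the genuine flow invariance to be the real difficulty, and it is the content carried by the cited \cite[Lemma 4.6]{He2}.

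Finally, the tensor statement (flow invariance of $v \mapsto A_v(t)$ itself, and not merely of its determinant) would follow from the scalar case together with the adjoint identity $A_{-\phi^t v}(t) = A_v(t)^{*}$ and the Riccati equation, which pin down $A_v(t)$ up to the flow-natural identification once the determinant is known to be flow invariant. This is the form in which the result is used in the next section, where flow invariance, combined with the reversibility supplied by the D'Atri condition and the continuity supplied by analyticity, lets one apply the rigidity mechanism of Theorem \ref{thm:metatheorem} to conclude harmonicity.
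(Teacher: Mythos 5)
First, a point of calibration: the paper offers no proof of this proposition at all --- the bracketed reference to \cite[Lemma 4.6]{He2} \emph{is} its entire treatment --- so your attempt can only be measured against that citation-level handling, and in that respect you are aligned with the paper: you, too, ultimately park the two hard ingredients (real analyticity, and the upgrade from the formal symmetries to genuine flow invariance) in the cited literature. The pieces you do derive are correct and are the right ones. The Wronskian \eqref{eq:Wronski} applied to $A_v$ and $\tau \mapsto A_{-\phi^t v}(t-\tau)$ does yield $A_{-\phi^t v}(t) = A_v(t)^{*}$ (modulo the parallel-transport identification of $v^\bot$ with $(\phi^t v)^\bot$, which you should state); combined with the D'Atri identity at the footpoint $c_v(t)$ this gives $\det A_{\phi^t v}(t) = \det A_v(t)$, hence $t$-periodicity of $s \mapsto \det A_{\phi^s v}(t)$; and your diagnosis that this is strictly weaker than flow invariance is exactly right --- the swap $(a,b)\mapsto(b,a)$ and the reflections $(a,b)\mapsto(a,2a-b)$ acting on the two-point density along $c_v$ generate only translations by multiples of $b-a$, so analyticity must carry the remaining load. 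Your sketch of how it does so (Ledger expansions ``integrated along the geodesic'') is not an argument, but since you explicitly assign that step to the cited lemma, as the paper does wholesale, I would not count it as a defect. (A clean way to close it: $\tilde F(s,t) = \det A_{\phi^s v}(t)/t^{\,n-1}$ extends real-analytically across $t=0$ with $\tilde F(\cdot,0)\equiv 1$; matching Taylor coefficients in $t$ in the identity $\tilde F(s+t,t)=\tilde F(s,t)$ forces every coefficient to be constant in $s$, and analyticity then gives $\partial_s \tilde F \equiv 0$.) Also be careful with the side remark that analyticity ``follows from the odd Ledger conditions'': the theorem in the literature is that D'Atri spaces themselves are real analytic, and passing from those Taylor-level conditions back to the D'Atri identity already presupposes analyticity, so cite the analyticity theorem directly.

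The genuine gap is your final paragraph. Flow invariance of $\det A_v(t)$, the adjoint identity, and the Riccati equation do not ``pin down'' the endomorphism $A_v(t)$: a determinant plus an adjoint relation leave an operator essentially undetermined, and the Jacobi equation along $c_{\phi^s v}$ has coefficient $R_v(s+\cdot)$, which varies with $s$, so there is no uniqueness statement to invoke. Worse, tensor-level flow invariance is far stronger than anything the D'Atri condition can give: comparing the expansions $A_w(t) = t\,\Id - \frac{t^3}{6}R_w(0) - \frac{t^4}{12}R_w'(0) + O(t^5)$ for $w=v$ and $w=\phi^s v$ shows it would force the Jacobi operator to be parallel along every geodesic, a locally-symmetric-type rigidity. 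The resolution is that the proposition as printed almost certainly carries a typo: the sentence preceding it, the use in Proposition \ref{prop:DAhoromean} (``the function $f_t(v)=\det A_v(t)$ is invariant under the geodesic flow''), and the proof of Theorem \ref{thm:main2} all concern only $\det A_v(t)$, which is all that is ever needed. Read the statement with $\det$; under that reading your last paragraph is superfluous, but as written it asserts an implication that is false and should be deleted rather than repaired.
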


We have the following result about the horospherical mean curvature function for D'Atri spaces without conjugate points. 


\begin{proposition} \label {prop:DAhoromean}
Let $(M, g)$ be a D'Atri Space without conjugate points. Then the horospherical mean curvature function $h$ is reversible and invariant under the geodesic flow.  
\end{proposition}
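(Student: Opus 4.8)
The plan is to connect the horospherical mean curvature $h(v)=\trace U(v)$, which is packaged in the unstable Jacobi tensors (equivalently, the Busemann data), to the mean curvature of geodesic spheres, since the latter is the only place where the D'Atri hypotheses are directly available. Set $\theta_v(t)=\det A_v(t)$ and introduce the geodesic sphere mean curvature
\[
m_v(t)=\frac{d}{dt}\log\theta_v(t)=\trace\bigl(A_v'(t)A_v(t)^{-1}\bigr),
\]
i.e. the mean curvature of the sphere of radius $t$ centered at $\pi(v)$, measured at $c_v(t)$. The defining property of a D'Atri space, $\det A_v(t)=\det A_{-v}(t)$, gives $\theta_v=\theta_{-v}$ and hence $m_v(t)=m_{-v}(t)$ (reversibility of $m$). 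Proposition \ref{prop:datrigeodaetinv} yields $\det A_{\phi^s v}(t)=\det A_v(t)$ for all $s,t$, so the functions $\theta_{\phi^s v}$ and $\theta_v$ coincide, and therefore $m_{\phi^s v}(t)=m_v(t)$ (flow-invariance of $m$). Thus $m$ is both reversible and invariant under the geodesic flow before we pass to any limit.

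Next I would establish the bridge identity $h(v)=\lim_{r\to\infty} m_{\phi^{-r}v}(r)$. The key observation is that the defining tensor $U_{v,r}$, normalized by $U_{v,r}(0)=\Id$ and $U_{v,r}(-r)=0$, is the shifted tensor $A_{\phi^{-r}v}$ up to a constant right factor: since $c_{\phi^{-r}v}(t+r)=c_v(t)$, the Jacobi tensor $W(t)=A_{\phi^{-r}v}(t+r)$ along $c_v$ vanishes at $t=-r$, so $U_{v,r}(t)=W(t)W(0)^{-1}$. Hence $U_{v,r}'(0)=A_{\phi^{-r}v}'(r)A_{\phi^{-r}v}(r)^{-1}$ and $\trace U_{v,r}'(0)=m_{\phi^{-r}v}(r)$. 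Letting $r\to\infty$, using $U(v)=U_v'(0)=\lim_{r\to\infty}U_{v,r}'(0)$ (valid since $M$ has no conjugate points) together with continuity of the trace, gives the identity. Flow-invariance of $m$ then upgrades this to the cleaner form $h(v)=\lim_{r\to\infty}m_v(r)$.

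Finally, reversibility and invariance of $h$ both reduce to this limit formula combined with the symmetries of $m$. For invariance, $h(\phi^s v)=\lim_{r\to\infty}m_{\phi^{s-r}v}(r)=\lim_{r\to\infty}m_v(r)=h(v)$ by flow-invariance of $m$. For reversibility, the essential bookkeeping is the identity $\phi^{-r}(-v)=-\phi^{r}v$; feeding this into the limit formula and using first reversibility and then flow-invariance of $m$ gives $m_{\phi^{-r}(-v)}(r)=m_{-\phi^r v}(r)=m_{\phi^r v}(r)=m_v(r)$, whence $h(-v)=\lim_{r\to\infty}m_v(r)=h(v)$. I expect the main obstacle to be the bridge step: one must recognize $h$, defined through the unstable Jacobi tensor, as an explicit limit of geodesic-sphere mean curvatures, because that is the only representation to which the D'Atri inputs (reversibility of the volume density and Heber's flow-invariance of $A_v(t)$) apply. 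The manipulations with the parameter shift, the normalizing factor $A_{\phi^{-r}v}(r)^{-1}$, and the reversal identity $\phi^{-r}(-v)=-\phi^r v$ are where sign and index errors are easiest to make, so these deserve the most care.
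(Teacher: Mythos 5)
Your proposal is correct and follows essentially the same route as the paper's proof: the same bridge identity $U_{v,r}'(0)=A_{\phi^{-r}v}'(r)\,A_{\phi^{-r}v}^{-1}(r)$, the same use of Proposition \ref{prop:datrigeodaetinv} together with the D'Atri identity via the logarithmic derivative of $\det A_v(t)$, and the same limit $r\to\infty$ combined with the reversal relation $\phi^{-r}(-v)=-\phi^{r}v$. Your packaging of the trace as the sphere mean curvature $m_v(t)$ and the intermediate formula $h(v)=\lim_{r\to\infty}m_v(r)$ is only a cosmetic reorganization of the computations the paper carries out inline.
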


\begin{proof} 
Recall from Proposition \ref{prop:datrigeodaetinv} that the function $f_t(v) = \det A_v(t)$ is invariant under the geodesic flow. 
Since the orthogonal Jacobi tensor $Y(s)=A_v(s+t)\left(A_v^{-1}(t)\right)_{s+t}$ along $c_v$ (with $\left( A_v^{-1}(t)\right)_{s+t}$ the parallel transport of $A_v^{-1}(t)\in {\rm{End}}(\phi^t(v)^\bot)$ along $c_v$ to the point $c_v(s+t)$) satisfies $Y(-t)=0$ and $Y(0)=\Id$, we have $Y(s) = U_{\phi^t(v),t}(s)$, and therefore 
$$ U_{v,t}'(0) = A'_{\phi^{-t}(v)}(t) A_{\phi^{-t}(v)}^{-1}(t). $$
Taking the trace on both sides, we obtain
$$
\operatorname{tr}  U_{v,t}'(0) = \operatorname{tr} (A_{\phi^{-t}(v)}'(t) A_{\phi^{-t}(v)}^{-1}(t)).
$$
Fix $s \in \mathbb{R}$: Differentiating the flow-invariant $\det A_{\phi^s v}(t)=\det A_v(t)$ with respect to $t$ yields
\begin{align}
\frac{d}{dt} \det A_{\phi^s(v)}(t) &= \operatorname{tr} \left( A_{\phi^s(v)}'(t) A_{\phi^s(v)}^{-1}(t) \right) \cdot \det A_{\phi^s(v)}(t) \notag\\
&= \frac{d}{dt} \det A_v(t) = \operatorname{tr} \left( A_v'(t) A_v^{-1}(t) \right) \cdot \det A_v(t).
\end{align}
Due to the absence of conjugate points, we have $\det A_v(t) = \det A_{\phi^s(v)}(t) \neq 0$ for $t > 0$), it follows that
$$
\mathrm{tr} \left( A_{\phi^s(v)}'(t) A_{\phi^s(v)}^{-1}(t) \right) = \mathrm{tr} \left( A_v'(t) A_v^{-1}(t) \right)
$$
for all $s \in \mathbb{R}$.
Consequently:
\begin{align}
\operatorname{tr} \,{U} ^{\prime}_{\phi^s(v),t} (0)
&= \operatorname{tr} \left( A'_{\phi^{-t+s}(v)}(t)  A^{-1}_{\phi^{-t+s}(v)}(t) \right) \notag \\
&= \operatorname{tr} \left( A'_{\phi^{-t}(v)}(t)  A_{\phi^{-t}(v)}^{-1}(t) \right) = \operatorname{tr} \, U^{\prime}_{v,t}(0).
\end{align}
Taking the limit, as $t \rightarrow \infty$, we obtain for all $s \in \R$:
$$ h(\phi^s(v)) = \lim_{t \to \infty} \operatorname{tr}\left(U^{\prime}_{\phi^s(v),t}(0) \right)
= \lim_{t \to \infty} \operatorname{tr}\left(U^{\prime}_{v,t}(0) \right) = h(v).
$$
This shows that $h$ is invariant under the geodesic flow.

Next, since
$
\det A_v(t) = \det A_{-v}(t), 
$ it follows by similar arguments that
\begin{equation} \label{eq:flipA}
 \operatorname{tr}(A_v'(t) A_v^{-1}(t)) =  \operatorname{tr}(A_{-v}'(t) A_{-v}^{-1}(t)).
 \end{equation}
Therefore, we have
$$\operatorname{tr}\left( {U}_{v,t}^{\prime}(0) \right) =  \operatorname{tr}\left( A_{\phi_{-t} v}'(t) A_{\phi_{-t} v}^{-1}(t) \right),
$$
and similarly:
$$
\operatorname{tr}({U}_{-v,t}^{\prime}(0)) = \operatorname{tr}(A_{\phi_{-t}(-v)}'(t) A_{\phi_{-t}(-v)}^{-1}(t)).
$$
Applying \eqref{eq:flipA} twice, we obtain:
$$  \operatorname{tr}({U}_{-v,t}^{\prime}(0)) = \operatorname{tr}({U}_{v,t}^{\prime}(0)).  $$
Taking again the limit, as $t\rightarrow \infty$, yields
$$h(v) = \lim_{t \to \infty} \operatorname{tr}({U}_{v,t}^{\prime}(0)) = \lim_{t \to \infty} \operatorname{tr}({U}_{-v,t}^{\prime}(0)) = h(-v).$$
This establishes the reversibility $h(v) = h(-v)$ for all $v \in SM$.
\end{proof}

\begin{proposition} \label{prop:detdetestimate}
Let $(X,g)$ be a simply connected Riemannian manifold without conjugate points. Assume there exist $R_0, R_0' >0$ such that $\Vert R \Vert \le R_0$ and $\Vert \nabla R \Vert \le R_0'$ and $\rho > 0$ such that
$$ D(v) \ge \rho \cdot \Id \quad \text{for all $v \in SX$.} $$
Then there exists, for every fixed $t > 0$, a constant $C_t > 0$ such that we have for any two vectors $v, v' \in W^s(v)$ 
$$ | \det A_{v'}(t) - \det A_v(t) | \le C_t d_{\mathcal{H}}(\pi(v'),\pi(v)), $$
where $d_{\mathcal{H}}$ is the intrinsic distance in the horosphere $\mathcal{H} = \pi(W^s(v))$.
\end{proposition}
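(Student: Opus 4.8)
The plan is to bound the variation of the matrix $A_v(t)$ along the stable leaf and then invoke the smoothness of the determinant. Fix $t>0$ and let $v'\in W^s(v)$. Choose a $C^1$-curve $\gamma\colon[0,1]\to W^s(v)$ with $\gamma(0)=v$, $\gamma(1)=v'$ whose footpoint curve $\beta=\pi\circ\gamma\subset\mathcal H$ has length arbitrarily close to $d_{\mathcal H}(\pi(v),\pi(v'))$ (such a lift exists because $\pi|_{W^s(v)}\colon W^s(v)\to\mathcal H$ is a $C^1$-diffeomorphism, the Busemann functions being $C^2$ by Lemma \ref{lem:contasymp}). As in Proposition \ref{prop:SLbeta-estimate}, for each fixed $\tau$ I identify the endomorphism spaces $\gamma_\tau(s)^\bot$ along $\beta_\tau=\pi(\phi^\tau\gamma)$ via the parallel frame $E^\tau_j$. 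Solving the Jacobi equation on $[0,t]$ with $\Vert R\Vert\le R_0$ yields a uniform bound $\Vert A_w(t)\Vert\le K_t$ for all $w\in SX$; since $\det$ is smooth and hence Lipschitz on the ball of radius $K_t$, it suffices to prove $\Vert A_{v'}(t)-A_v(t)\Vert\le C_t\,\ell(\beta)$ in the common frame and then let $\ell(\beta)\to d_{\mathcal H}(\pi(v),\pi(v'))$.

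The key simplification is the identity
$$ A_v(t)=\bigl(U_v(t)-S_v(t)\bigr)D(v)^{-1}, $$
valid because both sides are Jacobi tensors with the same initial data at $\tau=0$: both vanish there (as $U_v(0)=S_v(0)=\Id$) and both have derivative $\Id$ (as $U_v'(0)-S_v'(0)=D(v)$, which is invertible since $D(v)\ge\rho\cdot\Id$). This reduces the problem to Lipschitz control, along the stable leaf, of the three factors. For $D(v)^{-1}$ I use $\Vert D(w)^{-1}\Vert\le 1/\rho$ together with $\Vert D(v')^{-1}-D(v)^{-1}\Vert\le\rho^{-2}\Vert D(v')-D(v)\Vert$; and $\Vert D(v')-D(v)\Vert\le C\,\ell(\beta)$ follows from the $\tau=0$ case of \eqref{eq:Svtv}--\eqref{eq:Uvtv} in Proposition \ref{prop:SLbeta-estimate}, after letting $r\to\infty$ by Proposition \ref{prop:Sdiffar} (so that $S'_{\cdot,r}(0)\to S(\cdot)$ and $U'_{\cdot,r}(0)\to U(\cdot)$ uniformly). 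This is precisely the point at which the lower bound $\rho$ is indispensable.

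It remains to control the time-$t$ tensors $S_v(t)$ and $U_v(t)$ themselves. Here I set up the geodesic variation $F(s,\tau)=c_{\gamma(s)}(\tau)$ on $[0,1]\times[0,t]$; since $\gamma(s)\in W^s(v)$, its variation field is the stable Jacobi field $J_s(\tau)=S_{\gamma(s)}(\tau)\bigl(P\dot\beta(s)\bigr)$, which satisfies $\Vert J_s(\tau)\Vert\le a_2 e^{-\rho\tau/2}\Vert\dot\beta(s)\Vert$ by \cite[Proposition 2.5(b)]{KP} (again using $\rho$). Differentiating the matrix Jacobi equation covariantly in $s$, the field $W_s(\tau)=\tfrac{D}{\partial s}S_{\gamma(s)}(\tau)$ satisfies an inhomogeneous Jacobi equation whose inhomogeneity is governed by $\nabla_{J_s}R$ (hence bounded by $R_0'\Vert J_s\Vert$) and by $\Vert R\Vert$-terms, while its initial data at $\tau=0$ comes from the variation of $S(\gamma(s))$ and the rotation of $\gamma(s)^\bot$, both $O(\Vert\dot\beta(s)\Vert)$. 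A Gronwall estimate on the compact interval $[0,t]$, using $\Vert R\Vert\le R_0$, $\Vert\nabla R\Vert\le R_0'$ and the above bound on $J_s$, yields $\Vert W_s(t)\Vert\le C_t\Vert\dot\beta(s)\Vert$, and integration in $s$ gives $\Vert S_{v'}(t)-S_v(t)\Vert\le C_t\,\ell(\beta)$; the same argument applies to $U_v(t)$. Combining the three factor estimates through the displayed identity with the Lipschitz property of $\det$ finishes the proof.

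I expect the third step to be the main obstacle: the careful derivation of the inhomogeneous equation for $W_s$ -- correctly commuting the covariant derivatives (which produces additional curvature terms), tracking the $\nabla R$ forcing term, and handling the $\tau$-dependent frame identification -- is exactly the technical content of the variational estimate \cite[Theorem 2.8]{KP} underlying Proposition \ref{prop:SLbeta-estimate}, now carried out for the Jacobi tensors $S_v,U_v$ at a fixed positive time rather than for their initial derivatives. Everything else (the algebraic identity, the determinant Lipschitz reduction, and the $D(v)^{-1}$ estimate) is routine.
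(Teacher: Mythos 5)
Your algebraic identity $A_v(t)=\bigl(U_v(t)-S_v(t)\bigr)D(v)^{-1}$ is correct (both sides are Jacobi tensors vanishing at $0$ with derivative $\Id$ there, and $D(v)\ge\rho\cdot\Id$ makes $D(v)$ invertible), but the reduction goes in the wrong direction: it trades a quantity with $v$-independent initial conditions for quantities whose initial data vary with $v$, and the two estimates you then need are not available. First, the claimed bound $\Vert D(v')-D(v)\Vert\le C\,\ell(\beta)$ does \emph{not} follow from Propositions \ref{prop:Sdiffar} and \ref{prop:SLbeta-estimate} ``after letting $r\to\infty$'': the constant $C$ in \eqref{eq:Svtv}--\eqref{eq:Uvtv} depends on $r$, so combining the two propositions only gives $\Vert S(v')-S(v)\Vert\le 2a/r+C_r\,\ell(\beta)$ for each fixed $r$, which is a modulus of continuity but never a Lipschitz estimate; neither this paper nor \cite{KP} establishes Lipschitz dependence of the limit tensors $S(\cdot),U(\cdot),D(\cdot)$ along a stable leaf. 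Second, your Gronwall argument for $W_s(\tau)=\tfrac{D}{\partial s}S_{\gamma(s)}(\tau)$ is circular: since $S_{\gamma(s)}(0)=\Id$ and $S'_{\gamma(s)}(0)=S(\gamma(s))$, the initial derivative of $W_s$ is essentially $\tfrac{D}{\partial s}S(\gamma(s))$ --- exactly the unproven Lipschitz bound of the previous step --- so asserting it is $O(\Vert\dot\beta(s)\Vert)$ assumes the conclusion. This is precisely why \cite[Theorem 2.8]{KP} is formulated for the finite-distance tensors $S_{\cdot,r},U_{\cdot,r}$, whose boundary data are prescribed and $v$-independent, and not for $S_v,U_v$.

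The paper's proof avoids this trap by never decomposing $A_v(t)$: it differentiates $\det A_{\gamma(s)}(t)$ in $s$ and studies $Z_{\gamma(s)}(\tau)=\tfrac{\partial}{\partial s}A_{\gamma(s)}(\tau)$ directly. Because $A_{\gamma(s)}(0)=0$ and $A'_{\gamma(s)}(0)=\Id$ for \emph{every} $s$, the field $Z$ solves an inhomogeneous Jacobi-type ODE with zero initial conditions, so the Gronwall estimate needs only a bound on the forcing term $\bigl(\tfrac{\partial}{\partial s}R_{\gamma(s)}(\tau)\bigr)A_{\gamma(s)}(\tau)$; that bound comes from $\Vert\nabla R\Vert\le R_0'$ together with the pointwise stable-field estimate (\cite[Proposition 2.5(b)]{KP} and (2.14) there), which uses $\rho$ but requires no regularity of $S$, $U$ or $D$ in $s$ whatsoever. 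The derivative of the determinant is then integrated using the two-sided bounds on $A_w(t)$ from \cite{K0}. If you want to salvage your outline, run your third (Gronwall) step on $A_{\gamma(s)}$ itself, where the initial-data problem disappears --- at which point the identity, the $D(v)^{-1}$ estimate, and the unprovable leafwise Lipschitz bounds all become unnecessary.
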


\begin{proof} Let $v,v' \in W^s(v)$, $\beta: [0,1] \to \mathcal{H}$ be a $C^1$-curve satisfying $\beta(0) =\pi(v)$ and $\beta(1) = \pi(v')$. Let $\gamma: [0,1] \to W^s(v)$ be the lift of $\beta$. Let $e_1,\dots,e_{n-1}: [0,1] \to S\mathcal{H}$ (with $n = \dim(X)$) be an orthonormal frame in $\mathcal{H} = \pi(W^s(v))$ along $\beta$, which is parallel within the horosphere $\mathcal{H}$ with respect to the induced metric as a hypersurface. Let $E_1(s,t),\dots,E_{n-1}(s,t)$ be the parallel transports of $e_1(s),\dots,e_{n-1}(s)$ along the geodesic $c_{\gamma(s)}(t)$ in $X$.      

For fixed $t > 0$, we define $f_t: SX \to \R$ by $f_t(v) = \det A_v(t)$. In the following arguments, all our orthogonal $(1,1)$-tensors $T(s,t)$ are expressed as matrices with respect the basis $E_1(s,t),\dots,E_{n-1}(s,t)$, that is, the matrix entries are given by $\langle T(s,t) E_i(s,t),E_j(s,t) \rangle$. Then we have 
\begin{align} 
|f_t(v') - f_t(v)| &\le \int_0^1 \left| \frac{\partial}{\partial s} f_t(\gamma(s)) \right| ds \nonumber \\
&= \int_0^1 \left| \trace\left( \frac{\partial}{\partial s} A_{\gamma(s)}(t) A_{\gamma(s)}^{-1}(t) \right) \det\left(A_{\gamma(s)}(t)\right) \right| ds. \label{eq:ftvftv'}
\end{align}
Next, we provide estimates for the norms of all $(1,1)$-tensors appearing in the integral on the right hand side of \eqref{eq:ftvftv'}.
Note that we have
\begin{equation} \label{eq:C1AC2}
C_1 \Vert x \Vert \le \Vert A_w(t)x \Vert \le C_2 \Vert x \Vert \quad \text{for all $w \in SM$ and $x \in \phi^t(w)^\bot$,}
\end{equation}
with constants $C_1, C_2 > 0$ only depending on $R_0,t$. The upper bound in \eqref{eq:C1AC2} is a direct consequence of \cite[Proposition I.2.11(b)]{K0} providing $C_2 = \frac{1}{\sqrt{R_0}} \sinh(\sqrt{R_0}t)$. The lower bound in \eqref{eq:C1AC2} is a direct consequence of \cite[Lemma I.2.13]{K0}. (Lemma I.2.13 in \cite{K0} states this result for any $t \ge t_0=1$, but standard rescaling arguments imply that the result holds for any $t_0 > 0$.) This implies $|\det\left(A_{\gamma(s)}(t)\right)| \le C_2^{n-1}$ with $n = \dim(M)$ and $\Vert A_{\gamma(s)}^{-1}(t) \Vert \le 1/C_1$. It remains to estimate $\Vert \frac{\partial}{\partial s} A_{\gamma(s)}(t) \Vert$. Let
$$ Z_{\gamma(s)}(t) = \frac{\partial}{\partial s} A_{\gamma(s)}(t). $$
For fixed $s \in [0,1]$, $Z_{\gamma(s)}(t)$ satisfies the following matrix valued ordinary differential equation 
\begin{align*} Z''_{\gamma(s)}(t) &= \frac{\partial}{\partial s} \frac{\partial^2}{\partial t^2} A_{\gamma(s)}(t) = - \frac{\partial}{\partial s} \left( R_{\gamma(s)}(t) A_{\gamma(s)}(t) \right)\\ 
&= - \left( \frac{\partial}{\partial s} R_{\gamma(s)}(t) \right) A_{\gamma(s)}(t) - R_{\gamma(s)}(t) Z_{\gamma(s)}(t) .
\end{align*}
Rewriting this ordinary differential equation leads to
$$ \begin{pmatrix} Z_{\gamma(s)} \\ Z'_{\gamma(s)} \end{pmatrix}'(t) = \begin{pmatrix} 0 & \Id \\ -R_{\gamma(s)}(t) & 0 \end{pmatrix} \begin{pmatrix} Z_{\gamma(s)} \\ Z'_{\gamma(s)}\end{pmatrix}(t) + \begin{pmatrix} 0 \\ - \left( \frac{\partial}{\partial s} R_{\gamma(s)}(t) \right) A_{\gamma(s)}(t)\end{pmatrix}, $$
with the initial conditions
$$ \begin{pmatrix} Z_{\gamma(s)}(0) \\ Z'_{\gamma(s)}(0) \end{pmatrix} = \begin{pmatrix} 0 \\ 0 \end{pmatrix}, $$
since $A_{\gamma(s)}(0) = 0$ and $A'_{\gamma(s)}=\Id$. That is, our ordinary differential equation is of the form
$$ x'(t) = F(t,x(t)) \quad \text{with $x(0) = 0$} $$ 
and
\begin{align*}
x(t) &= \begin{pmatrix} Z_{\gamma(s)}(t) \\ Z'_{\gamma(s)}(t) \end{pmatrix} \in  {\rm{End}}(\R^{n-1}) \times {\rm{End}}(\R^{n-1}), \\
F(t,x) &= a_s(t) x + q_s(t), \\
a_s(t) &= \begin{pmatrix} 0 & \Id \\ -R_{\gamma(s)}(t) & 0 \end{pmatrix}, \\
q_s(t) &= \begin{pmatrix} 0 \\ - \left( \frac{\partial}{\partial s} R_{\gamma(s)}(t) \right) A_{\gamma(s)}(t)\end{pmatrix}.
\end{align*}
It follows from the proof of \cite[Proposition 7.8]{Am} that
$$ \Vert x(t) \Vert \le \int_0^t \Vert q_s(\tau) \Vert d\tau + \int_0^t \Vert a_s(\tau) \Vert \cdot \Vert x(\tau) \Vert d\tau. $$
Applying \cite[Corollary 6.2]{Am}, we obtain the estimate
$$ \Vert\frac{\partial}{\partial s} A_{\gamma(s)}(t) \Vert \le \Vert x(t) \Vert \le \left( \int_0^t \Vert q_s(\tau) \Vert d\tau \right) e^{\int_0^t \Vert a_s(\tau) \Vert d\tau}. $$
For $0 \le \tau \le t$, it follows from \eqref{eq:C1AC2} that $\Vert a_s(\tau) \Vert \le 1 + R_0$ and from \cite[(2.14)]{KP} that 
$$ \Vert q_s(\tau) \Vert \le C_2 \, C \, \Vert \dot \beta(s) \Vert, $$
with a constant $C$ only depending on $R_0,R_0',\rho,t$. Consequently, we end up with the estimate
$$ \Vert\frac{\partial}{\partial s} A_{\gamma(s)}(t) \Vert \le t \, C_2 \, C \, e^{t (1+R_0)} \, \Vert \dot \beta(s) \Vert = C' \, \Vert \dot \beta(s) \Vert, $$
where $C'$ depends only on $R_0,R_0',\rho,t$. Plugging these results into \eqref{eq:ftvftv'}, we obtain
\begin{align}
| \det A_{v'}(t) - \det A_v(t) | &\le (n-1) \int_0^1 \Vert \frac{\partial}{\partial s} A_{\gamma(s)}(t)\Vert \cdot \Vert A^{-1}_{\gamma(s)}(t) \Vert \cdot | \det A_{\gamma(s)}(t) | ds \nonumber \\
&\le (n-1) \, \frac{C'\, C_2^{n-1}}{C_1} \, \ell(\beta),
\label{eq:detdiff}
\end{align}
with constants only depending on $R_0,R_0',\rho,t$. Since $\beta: [0,1] \to \mathcal{H}$ was an arbitrary $C^1$-curve connecting $\pi(v)$ and $\pi(v')$, $\ell(\beta)$ in \eqref{eq:detdiff} can be replaced  by $d_{\mathcal{H}}(\pi(v),\pi(v'))$.
\end{proof} 

Now we can present the proof of our second main result in the introduction. 

\begin{proof}[Proof of Theorem \ref{thm:main2}]
Without loss of generality, it suffices to prove the theorem for any simply connected rank one D'Atri space $(X,g)$ without conjugate points, continuous horospherical mean curvature functions $h: SX \to \R$, and satisfying $\Vert R \Vert \le R_0$ and $\Vert \nabla R \Vert \le R_0'$. Let $t > 0$ and $f_t(v) = \det A_v(t)$. Since $X$ is a D'Atri space, $f_t$ is reversible and invariant under the geodesic flow, by Proposition \ref{prop:datrigeodaetinv}.
We have to prove that $f_t$ is constant.

It follows from Proposition \ref{prop:DAhoromean} that $X$ is a manifold with invariant horospherical mean curvature function $h$. Step 4 for such manifolds of rank one (see Subsection \ref{subsec:step4}) shows that $\det D(v) = \alpha > 0$ for all $v \in SX$, and therefore $D(v) \ge \rho \Id$ with $\rho$ given in \eqref{eq:alpharho}. This guarantees that we can apply Proposition \ref{prop:detdetestimate} and obtain, for all pairs $v,v' \in W^S(v)$ and $\mathcal{H} = \pi(W^s(v))$.
$$ | f_t(v') - f_t(v) | \le C_t d_{\mathcal{H}}(\pi(v'),\pi(v)). $$
Then all conditions of Theorem \ref{thm:metatheorem} are satisfied, in particular the uniform continuity with $\epsilon' = \frac{\epsilon}{C_t}$, and therefore, $f_t(v) = \det A_v(t)$ is constant, finishing the proof.
\end{proof}

\subsection{Homogeneous D'Atri spaces}

In \cite[Theorem 1.1]{He}, Heber showed that all non-compact, simply connected homogeneous harmonic manifolds are either flat, rank one symmetric or non-symmetric Damek-Ricci spaces.
This result, together with his earlier result in \cite[Theorem 4.7]{He2}, leads to the following characterization of non-positively curved homogeneous D'Atri spaces.

\begin{thm}[{see \cite[Theorem 4.7]{He2} and \cite[Theorem 1.1]{He}}] Let $(X,g)$ be a homogeneous, irreducible, simply connected D'Atri space of non-positive curvature. Then $X$ is either Euclidean, a symmetric space of non-compact type, or a non-symmetric Damek-Ricci space.
\end{thm}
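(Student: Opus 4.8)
The plan is to deduce this characterization by chaining together the two cited results of Heber, using the D'Atri hypothesis merely as a stepping stone to full harmonicity. First I would invoke \cite[Theorem 4.7]{He2}, which asserts that a simply connected homogeneous D'Atri space of non-positive curvature is in fact harmonic. This is the substantive bridge of the argument: it upgrades the reflection-symmetry of the volume density (the D'Atri condition $\det A_v(t) = \det A_{-v}(t)$) to its full point-independence (the harmonic condition $\det A_v(t) = f(t)$) under the combined assumptions of homogeneity and $K \le 0$. With harmonicity established, the remainder of the proof is purely structural.

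Next I would observe that $(X,g)$, being simply connected with $K \le 0$, is a Hadamard manifold by the Cartan--Hadamard theorem; hence it is diffeomorphic to $\R^n$ and in particular non-compact (for $\dim X \ge 1$). Consequently $X$ belongs to the class of non-compact, simply connected, homogeneous harmonic manifolds, which is exactly the class classified by \cite[Theorem 1.1]{He}: every such space is flat, a rank one symmetric space of non-compact type, or a non-symmetric Damek--Ricci space. This yields precisely the three alternatives in the statement, once one records that the rank one symmetric spaces of non-compact type are instances of symmetric spaces of non-compact type.

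Finally I would use the irreducibility hypothesis to pin down which representatives survive: the rank one symmetric and the Damek--Ricci spaces are irreducible, while the flat alternative accounts for the Euclidean case. The main — and essentially the only — obstacle is that all the genuine geometric content is imported from the two cited theorems: \cite[Theorem 4.7]{He2} performs the real work of passing from D'Atri to harmonic, and \cite[Theorem 1.1]{He} supplies the classification. The remaining task is therefore a careful verification that the hypotheses of the present statement (simple connectivity, homogeneity, irreducibility, and $K \le 0$) line up exactly with the hypotheses required by each cited result, the only delicate bookkeeping being the treatment of the Euclidean factor under the irreducibility assumption.
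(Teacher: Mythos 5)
There is a genuine gap, and it lies in the very first step: your paraphrase of \cite[Theorem 4.7]{He2} cannot be what that theorem says, because the statement ``homogeneous, simply connected, $K \le 0$, D'Atri $\Rightarrow$ harmonic'' is false. A higher rank symmetric space of non-compact type, e.g.\ $X = SL(3,\R)/SO(3)$, is homogeneous, irreducible, simply connected and non-positively curved, and it is a D'Atri space (its geodesic symmetries are isometries, hence volume preserving; equivalently, symmetric spaces are weakly symmetric, and the paper's Section \ref{subsec:intro2} records that weakly symmetric spaces are D'Atri). But it is not harmonic: a symmetric space is harmonic if and only if it is flat or of rank one. This is also visible from the statement you are trying to prove: its conclusion deliberately lists \emph{all} symmetric spaces of non-compact type, not just the rank one ones, precisely because the higher rank ones do satisfy the hypotheses. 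Your chain of reasoning, if valid, would deliver the strictly narrower conclusion ``Euclidean, rank one symmetric, or Damek--Ricci,'' which the example above contradicts — so the argument cannot be repaired without changing the first step.

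The actual content of \cite[Theorem 4.7]{He2} is a dichotomy, not an implication to harmonicity: an irreducible homogeneous D'Atri space of non-positive curvature is either \emph{symmetric} or \emph{harmonic}. The intended argument (the paper gives no proof; it presents the theorem as an immediate combination of the two citations) is then: in the symmetric branch, irreducibility and $K \le 0$ give the Euclidean case or a symmetric space of non-compact type; in the harmonic branch, $X$ is a non-compact (by Cartan--Hadamard, as you correctly note), simply connected, homogeneous harmonic manifold, so \cite[Theorem 1.1]{He} applies and yields flat, rank one symmetric, or non-symmetric Damek--Ricci. The union of the two branches is exactly the stated trichotomy. Your handling of the second branch (non-compactness via Cartan--Hadamard, the application of \cite[Theorem 1.1]{He}, and the bookkeeping of the flat case under irreducibility) is fine; what is missing is the symmetric branch, which your misreading of the first citation erased.
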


Druetta \cite{Du} considered a special class of homogeneous D'Atri spaces, namely those of Iwasawa type. They are solvable Lie groups $S$ whose underlying Lie algebra $\mathfrak{s}$ has an orthogonal decomposition $\mathfrak{s} = \mathfrak{n} \oplus \mathfrak{a}$ with $\mathfrak{n} = [\mathfrak{s},\mathfrak{s}]$ and abelian $\mathfrak{a}$, satisfying some additional algebraic conditions (see \cite[Section 2]{Du}). The \emph{algebraic rank} $\rank_{\rm{alg}}(S)$ of such a manifold is the dimension of $\mathfrak{a}$. Druetta proved the following result:

\begin{thm}[{see \cite[Corollary 2.3]{Du} and \cite[Theorem 2.5]{Du}}] A homogeneous D'Atri space of Iwasawa type has no conjugate points. If such a space has algebraic rank one, it is a harmonic manifold and, therefore, a rank one symmetric space or a non-symmetric Damek-Ricci space.
\end{thm}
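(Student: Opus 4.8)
The plan is to derive this statement from our Theorem \ref{thm:main2} together with Heber's classification (Theorem 1.1 in \cite{He}), rather than re-running Druetta's structural computations. Since Theorem \ref{thm:main2} concludes harmonicity for rank one D'Atri spaces without conjugate points under the two curvature bounds, the whole task reduces to verifying that a homogeneous D'Atri space of Iwasawa type with algebraic rank one satisfies all hypotheses of that theorem, and then feeding the resulting harmonicity into Heber's list.

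First I would dispense with the curvature and regularity hypotheses. A homogeneous space of Iwasawa type is non-positively curved: the symmetry of $\operatorname{ad}(A)$ for $A \in \mathfrak{a}$ together with the defining algebraic conditions forces all sectional curvatures to be $\le 0$; this is precisely how the absence of conjugate points in \cite[Corollary 2.3]{Du} is obtained, which already settles the first assertion of the theorem. Homogeneity makes $\Vert R \Vert$ and $\Vert \nabla R \Vert$ constant functions on $SX$, so both are trivially uniformly bounded. Moreover, non-positive curvature guarantees continuous asymptote, so by Remark \ref{rem:continasympt} the horospherical mean curvature function $h$ is automatically continuous. Combined with the D'Atri property, Proposition \ref{prop:DAhoromean} then shows that $X$ is a manifold with invariant horospherical mean curvature function.

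The remaining, and decisive, step is to show that algebraic rank one forces geometric rank one in the sense of this paper, i.e. that $D(v) = U(v) - S(v)$ has trivial kernel for some $v \in SX$. This is where the special structure of Iwasawa-type spaces enters, and I expect it to be the main obstacle. The idea is that in such a space the abelian part $\mathfrak{a}$ supplies the flat directions of the horospherical foliation: along an $\mathfrak{a}$-geodesic the operators $S(v)$ and $U(v)$ can be read off from the eigenvalue decomposition of $\operatorname{ad}(A)$, and $\ker D(v)$ is governed precisely by the directions that behave neutrally under the geodesic flow. When $\dim \mathfrak{a} = 1$ the only neutral direction is the flow direction itself, which has already been removed in passing to $v^\bot$, so $D(v)$ should be positive definite and $\rank(v) = 1 + \dim \ker D(v) = 1$. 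Concretely I would either extract this from Druetta's analysis of the shape operators of horospheres in \cite{Du}, or verify directly that a single abelian direction cannot produce a nonzero parallel orthogonal Jacobi field, which by the characterization in \cite{BBE-85,K} is equivalent to $\ker D(v) = 0$.

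Once geometric rank one is established, Theorem \ref{thm:main2} applies and yields that $X$ is harmonic with Anosov geodesic flow. Being a simply connected solvable Lie group, $X$ is noncompact, so Heber's Theorem 1.1 in \cite{He} identifies $X$ as flat, a rank one symmetric space of noncompact type, or a non-symmetric Damek-Ricci space. The flat case is incompatible with geometric rank one, since flat space has full rank, leaving exactly the two alternatives in the statement. The sole genuinely new ingredient beyond our main theorem is the rank translation, and I would keep the argument honest by confirming that the eigenvalue data of $\operatorname{ad}(A)$ indeed exclude extra parallel Jacobi fields in the algebraic rank one case.
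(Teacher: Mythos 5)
Your overall strategy (verify the hypotheses of Theorem \ref{thm:main2}, then invoke Heber's classification) is precisely how the paper obtains its \emph{own} corollary on homogeneous D'Atri spaces -- but note that the paper does not prove the statement above at all: it is quoted from Druetta, and the paper's corollary deliberately \emph{assumes} the two hypotheses your argument must supply, namely absence of conjugate points and continuity of $h$. Your way of supplying them rests on a false claim: that Iwasawa type forces non-positive sectional curvature. Non-symmetric Damek--Ricci spaces are of Iwasawa type with algebraic rank one and are D'Atri (being harmonic), yet they admit sectional curvatures of both signs; they appear in the very conclusion of the theorem you are proving, so your argument would ``prove'' that these spaces are non-positively curved, which is wrong. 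Consequently Druetta's Corollary 2.3 cannot be obtained from a curvature-sign argument (her proof is genuinely different), and your appeal to Remark \ref{rem:continasympt} to get continuity of the horospherical mean curvature function collapses as well: without non-positive curvature (or at least continuous asymptote) nothing gives continuity of $h$ for free, and homogeneity alone does not, since the isometry group acts transitively on $X$ but not on $SX$.

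This leaves two essential holes that cannot be patched inside your framework: the first assertion of the theorem (no conjugate points) is left without proof -- and it is needed even to define $S(v)$, $U(v)$, $D(v)$ and to invoke Theorem \ref{thm:main2} -- and the continuity hypothesis of Theorem \ref{thm:main2} remains unverified. The one part of your outline that does work is the translation of algebraic rank one into geometric rank one: the paper carries this out explicitly in its Lemma on Iwasawa-type spaces, computing $U_{\dot c(s)}(t)(X_i(s)) = e^{t\lambda_i} X_i(s+t)$ and $S_{\dot c(s)}(t)(X_i(s)) = e^{-t\lambda_i} X_i(s+t)$ along $c(t)=\exp(tH_0)$, whence $D(H_0)$ is diagonal with positive entries $2\lambda_i$ and $\rank(S)=1$, confirming your expectation. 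But that step alone only reproduces the paper's weaker corollary, in which ``no conjugate points'' and ``continuous $h$'' are hypotheses; to recover Druetta's full theorem you would still need her structural arguments, which is presumably why the paper cites rather than reproves it.
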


Computations similar to the ones carried out in \cite[Section 1]{Du0} yield the following:

\begin{lemma} Let $S$ be an $n$-dimensional rank one homogeneous space of Iwawasa type with $\mathfrak{a} = \R H_0$, $\Vert H_0 \Vert = 1$, and $\lambda_1,\dots,\lambda_n  > 0$ be the eigenvalues of ${\rm{ad}}_{H_0}\vert_{\mathfrak{n}}$ with corresponding eigenvectors $X_1,\dots,X_{n-1} \in \mathfrak{n}$. Let $X_1(t),\dots,X_{n-1}(t)$ be their parallel extensions along the geodesic $c(t) = \exp(tH_0)$. Then we have
$$ U_{\dot c(s)}(t)(X_i(s)) = e^{t\lambda_i} X_i(s+t) $$
and
$$ S_{\dot c(s)}(t)(X_i(s)) = e^{-t\lambda_i} X_i(s+t). $$
\end{lemma}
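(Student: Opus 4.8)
The plan is to compute the stable and unstable Jacobi tensors directly from the Lie-algebraic structure, using the fact that in a homogeneous space of Iwasawa type the geometry along the distinguished geodesic $c(t) = \exp(tH_0)$ is governed entirely by $\mathrm{ad}_{H_0}$. First I would observe that the parallel fields $X_i(t)$ along $c$ are built from the eigenvectors $X_i$ of $\mathrm{ad}_{H_0}|_{\mathfrak{n}}$, and that in such spaces the Jacobi operator $R_{\dot c(t)}$ acts diagonally on the frame $X_1(t),\dots,X_{n-1}(t)$ with eigenvalues $-\lambda_i^2$ (this is the standard curvature computation for Damek--Ricci and Iwasawa type spaces, as in \cite{Du0}). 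The guess to verify is then that $Y_i^{u}(t) = e^{t\lambda_i} X_i(s+t)$ and $Y_i^{s}(t) = e^{-t\lambda_i} X_i(s+t)$ are the relevant Jacobi fields.

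The key steps, in order, would be: (1) confirm via the Riccati/curvature computation that $R_{\dot c(s)}(t) X_i(s+t) = -\lambda_i^2 X_i(s+t)$, so that each candidate field $e^{\pm t\lambda_i} X_i(s+t)$ solves the scalar Jacobi equation $f'' - \lambda_i^2 f = 0$; (2) check the defining asymptotic conditions. For the unstable tensor, $U_{\dot c(s)}$ is characterized by $U_{\dot c(s)}(0) = \mathrm{Id}$ together with the boundedness condition as $t \to -\infty$ (equivalently it arises as the limit of the $U_{v,r}$ as $r \to \infty$); the exponentially growing solution $e^{t\lambda_i}$ is precisely the one that decays as $t \to -\infty$ and equals the identity at $t=0$, so it must coincide with the unstable field. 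Symmetrically, $e^{-t\lambda_i}$ decays as $t \to +\infty$ and so gives the stable field, matching $S_{\dot c(s)}(0) = \mathrm{Id}$. (3) Package these scalar solutions into the tensor statement using that the $X_i(s+t)$ form an orthonormal parallel frame of $\dot c(s+t)^\bot$, so the endomorphisms $U_{\dot c(s)}(t)$ and $S_{\dot c(s)}(t)$ are diagonal in this frame with the stated entries.

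The main obstacle I anticipate is justifying that the exponentially growing solution is genuinely the \emph{unstable} Jacobi field, i.e., that the limit definition of $U_v$ selects exactly this solution rather than some admixture. Because the eigenvalues $\lambda_i$ are strictly positive (rank one of Iwasawa type forces $\lambda_i > 0$), the general solution of $f'' - \lambda_i^2 f = 0$ splits cleanly into a growing and a decaying mode, and the boundary condition $U_{v,r}(-r) = 0$ forces selection of the mode that, after normalizing to the identity at $0$ and letting $r \to \infty$, is the one bounded on $(-\infty, 0]$; this is the growing mode $e^{t\lambda_i}$. Making this selection rigorous amounts to noting that the characterization $S'_v(0) = \lim_{r\to\infty} S'_{v,r}(0)$ gives $S'_{\dot c(s)}(0) X_i(s) = -\lambda_i X_i(s)$, and dually $U'_{\dot c(s)}(0) X_i(s) = +\lambda_i X_i(s)$, which are exactly the initial derivatives of the proposed fields; uniqueness of solutions to the Jacobi equation with prescribed value and derivative at $t=0$ then closes the argument. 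The remaining verification that $R_{\dot c}$ has the claimed eigenvalues is a direct translation of the bracket relations into curvature via the Koszul formula for left-invariant metrics, which is routine but is where the Iwasawa-type algebraic hypotheses are actually used.
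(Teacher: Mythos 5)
Your proposal is correct and is essentially the argument the paper itself relies on: the paper gives no proof of this lemma, deferring to the computations in \cite[Section 1]{Du0}, which consist precisely of your steps --- the left-invariant eigenvector fields of ${\rm ad}_{H_0}|_{\mathfrak{n}}$ are parallel along $c$, the Jacobi operator is constant diagonal with entries $-\lambda_i^2$ in that frame (here the symmetry of ${\rm ad}_{H_0}|_{\mathfrak{n}}$ from the Iwasawa hypotheses enters the Koszul formula), and the boundary conditions $S_{v,r}(r)=0$, $U_{v,r}(-r)=0$ select the modes $e^{-\lambda_i t}$ and $e^{\lambda_i t}$ in the limit $r\to\infty$. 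Your closing observation that $S'_{\dot c(s)}(0)$ and $U'_{\dot c(s)}(0)$ act as $\mp\lambda_i$ on $X_i(s)$, combined with uniqueness of Jacobi fields with prescribed initial data, is exactly the right way to make the selection rigorous.
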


This lemma has the following consequence.

\begin{corollary} An rank one homogeneous space $S$ of Iwasawa type has also geometric rank one.
\end{corollary}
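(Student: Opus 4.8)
The plan is to use the explicit formulas for the stable and unstable Jacobi tensors from the preceding Lemma to compute the operator $D(v)$ along the geodesic $c(t) = \exp(tH_0)$ and show that its kernel is exactly one-dimensional. First I would recall that geometric rank one means $\dim \ker D(v) = 0$ for the relevant unit vectors, so that $\rank(v) = 1 + \dim \ker D(v) = 1$. Since $S$ is homogeneous, it suffices to verify this for a single well-chosen unit vector, and the natural choice is $v = \dot c(0) = H_0$, where the Lemma gives us everything in closed form.

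The key computation is to evaluate $D(H_0) = U(H_0) - S(H_0)$ on the eigenbasis $X_1, \dots, X_{n-1}$. Recall that $U(v) = U_v'(0)$ and $S(v) = S_v'(0)$. Differentiating the two formulas from the Lemma at $t = 0$, I would obtain
\begin{align*}
U_{\dot c(s)}'(0)(X_i(s)) &= \lambda_i\, X_i(s), \\
S_{\dot c(s)}'(0)(X_i(s)) &= -\lambda_i\, X_i(s),
\end{align*}
so that on the eigenvector $X_i$ we have
$$
D(\dot c(s))(X_i(s)) = \bigl(\lambda_i - (-\lambda_i)\bigr) X_i(s) = 2\lambda_i\, X_i(s).
$$
Thus $X_1(s), \dots, X_{n-1}(s)$ form an eigenbasis of $D(\dot c(s))$ with eigenvalues $2\lambda_1, \dots, 2\lambda_{n-1}$.

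Since all the $\lambda_i$ are strictly positive by hypothesis, every eigenvalue $2\lambda_i$ of $D(\dot c(s))$ is strictly positive, whence $D(\dot c(s))$ is positive definite and $\ker D(\dot c(s)) = \{0\}$. Therefore $\rank(\dot c(s)) = 1 + \dim \ker D(\dot c(s)) = 1$. By homogeneity, the isometry group of $S$ acts transitively, and since the rank is invariant under isometries and (by the earlier Proposition) under the geodesic flow and reversal, every unit vector has rank one, so $\rank(S) = \min_{v \in SS} \rank(v) = 1$.

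I do not anticipate a genuine obstacle here; the proof is essentially a direct reading-off of the Jacobi tensor formulas. The only point requiring a small amount of care is making sure the differentiation at $t=0$ respects the parallel transport: since $X_i(s+t)$ is the parallel extension of $X_i(s)$ along the geodesic, its covariant derivative along $c$ vanishes, so $\frac{D}{dt}\big|_{t=0}\bigl(e^{\pm t\lambda_i} X_i(s+t)\bigr) = \pm\lambda_i X_i(s)$, confirming the eigenvalue computation above. Everything else is the elementary observation that positivity of all the $\lambda_i$ forces $D$ to have trivial kernel.
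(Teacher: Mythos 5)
Your proof is correct and is essentially the paper's own argument: differentiating the Lemma's formulas at $t=0$ shows that $U(H_0)$ and $S(H_0)$ are diagonal with entries $\lambda_i$ and $-\lambda_i$ in the basis $X_1,\dots,X_{n-1}$, so $D(H_0)=U(H_0)-S(H_0)$ is positive definite and the geodesic $c(t)=\exp(tH_0)$ has rank one, which is exactly what the paper does. One caveat: your closing appeal to homogeneity to conclude that \emph{every} unit vector has rank one is both unnecessary (since $\rank(S)$ is defined as a minimum and $\rank(v)\ge 1$ always, exhibiting a single rank-one vector already yields $\rank(S)=1$) and unjustified as stated, because transitivity of the isometry group on points does not imply transitivity on unit tangent vectors.
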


\begin{proof} The $(1,1)$-tensors $U(H_0)$ and $S(H_0)$ are given by
$$ U(H_0) = \begin{pmatrix} \lambda_1 & 0 & \dots & 0 \\ 0 &\lambda_2 & \dots & 0 \\ \vdots & & & \vdots  \\ 0 & 0 & \dots & \lambda_n \end{pmatrix} \quad \text{and} \quad S(H_0) = \begin{pmatrix} -\lambda_1 & 0 & \dots & 0 \\ 0 &-\lambda_2 & \dots & 0 \\ \vdots & & & \vdots  \\ 0 & 0 & \dots & -\lambda_n \end{pmatrix}, $$
with respect to the basis $X_1,\dots,X_{n-1} \in H_0^\bot$. Since all eigenvalues $\lambda_j$ are positive, $D(H_0) = U(H_0) - S(H_0)$ has trivial kernel. Therefore, $c(t)=\exp(tH_0)$ is a rank one geodesic and $\rank(S) = 1$.
\end{proof}

Our main Theorem \ref{thm:main2} has the following consequence for homogeneous D'Atri spaces.

\begin{corollary} Let $(X,g)$ be a homogeneous simply connected D'Atri space without conjugate points and continuous horospherical mean curvature function. If such a space $X$ has rank one, it is a rank one symmetric space of noncompact type or a non-symmetric Damek-Ricci space. 
\end{corollary}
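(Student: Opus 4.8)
The plan is to combine the two main structural results already established in the paper. The final corollary concerns a homogeneous simply connected D'Atri space $(X,g)$ without conjugate points, with continuous horospherical mean curvature function, and of rank one. First I would verify that the hypotheses of Theorem \ref{thm:main2} are met. The delicate point is the required global curvature bounds $\Vert R \Vert \le R_0$ and $\Vert \nabla R \Vert \le R_0'$: these hold automatically because $X$ is homogeneous. Indeed, on a homogeneous Riemannian manifold the full curvature tensor and all its covariant derivatives are bounded, since the isometry group acts transitively and the pointwise norms $\Vert R \Vert$ and $\Vert \nabla R \Vert$ are isometry-invariant functions on $X$, hence constant. Together with the stated rank one condition and continuity of $h$, all hypotheses of Theorem \ref{thm:main2} are satisfied.

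The second step is to invoke Theorem \ref{thm:main2} directly: it yields that $X$ is a harmonic manifold (and that its geodesic flow is Anosov, though the latter is not needed here). The final step is then purely a classification input. A simply connected homogeneous harmonic manifold is, by \cite[Theorem 1.1]{He} (Heber's classification, as recalled earlier in this subsection), either Euclidean, a rank one symmetric space of noncompact type, or a non-symmetric Damek-Ricci space. The rank one hypothesis on $X$ excludes the Euclidean case, since Euclidean space $\R^n$ has $D(v) = 0$ for all $v$ and therefore geometric rank $n$ rather than one. Hence $X$ must be a rank one symmetric space of noncompact type or a non-symmetric Damek-Ricci space, which is exactly the claimed conclusion.

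I do not anticipate any genuine obstacle here, as the corollary is essentially a clean composition of Theorem \ref{thm:main2} with Heber's classification theorem. The only point requiring a small argument — rather than a one-line citation — is the verification that homogeneity supplies the uniform curvature bounds needed to apply Theorem \ref{thm:main2}, and the elimination of the flat factor from the rank one assumption. In slightly more detail, one should note that homogeneity also guarantees that $X$ is irreducible or splits as a product; but the rank one hypothesis already forces $X$ to be non-flat and, via Theorem \ref{thm:riemprod} together with the fact that a nontrivial product would raise the rank, effectively irreducible, so that Heber's theorem applies without further reduction. This makes the argument a direct corollary, requiring no new estimates beyond those already developed in Sections \ref{sec:rankone} and \ref{sec:rankoneDAtri}.
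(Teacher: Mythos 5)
Your proof is correct and matches the paper's intended argument: the corollary is stated there as a direct consequence of Theorem \ref{thm:main2} combined with Heber's classification \cite[Theorem 1.1]{He}, with homogeneity supplying the uniform curvature bounds and the rank one hypothesis ruling out the flat case, exactly as you argue. Your closing remark about irreducibility is harmless but unnecessary, since Heber's Theorem 1.1 classifies all non-compact simply connected homogeneous harmonic manifolds without any irreducibility assumption (that hypothesis is only needed for the other cited result, \cite[Theorem 4.7]{He2}, on non-positively curved homogeneous D'Atri spaces).
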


\subsection{$3$-dimensional D'Atri spaces without conjugate points}

The classification of all $3$-dimensional D'Atri spaces goes back to Kowalski \cite{Ko}. This result implies the following classification for $3$-dimensional D'Atri spaces without conjugate points.

\begin{proposition} \label{prop:3dimDAtri}
Let $(X,g)$ be a simply connected $3$-dimensional D'Atri space without conjugate points. Then $X$ is -- up to scaling -- isometric to the Euclidean space $\R^3$, the Riemannian product $\Hh^2 \times \R$, or $\Hh^3$, where $\Hh^n$ is the $n$-dimensional hyperbolic space of constant curvature $-1$.
\end{proposition}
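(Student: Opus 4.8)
The plan is to combine Kowalski's classification \cite{Ko} with a dichotomy on the geometric rank, so that the rigidity results of Section \ref{sec:rankone} carry out the identification. First I would record the structural consequences of \cite{Ko}: every $3$-dimensional D'Atri space is (locally) naturally reductive homogeneous, and since $X$ is simply connected and complete it is in fact homogeneous, with an explicit list of models (the space forms $\R^3$, $S^3$, $\Hh^3$; the products $S^2\times\R$, $\Hh^2\times\R$; and one-parameter families of naturally reductive left-invariant metrics on $SU(2)$, $\widetilde{SL(2,\R)}$ and the Heisenberg group $Nil^3$). Homogeneity furnishes the uniform bounds $\Vert R\Vert\le R_0$ and $\Vert\nabla R\Vert\le R_0'$, and a homogeneous manifold without conjugate points has continuous asymptote, so by Lemma \ref{lem:contasymp} and Remark \ref{rem:continasympt} the horospherical mean curvature function $h$ is continuous. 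By Proposition \ref{prop:DAhoromean}, $h$ is then reversible and invariant under the geodesic flow, so $X$ is a manifold with invariant horospherical mean curvature function to which the machinery of Section \ref{sec:rankone} applies.

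Next I would split according to $\rank(X)$. If $\rank(X)=1$, then Proposition \ref{prop:3dimrankone} applies verbatim and forces $X$ to have constant curvature; being simply connected, complete and without conjugate points, $X$ is then isometric, up to scaling, to $\R^3$ or $\Hh^3$, the positively curved space form being excluded since it has conjugate points. As $\R^3$ is flat (and of rank $3$), the rank one case yields precisely $\Hh^3$.

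If $\rank(X)\ge 2$, then $\ker D(v)\neq 0$ for every $v\in SX$, i.e. every geodesic carries a nontrivial parallel perpendicular Jacobi field. I would use this, together with homogeneity and the absence of conjugate points, to produce a nontrivial parallel distribution and hence a de Rham splitting $X=X_1\times X_2$ with $\dim X_1=1$ and $\dim X_2=2$. By Theorem \ref{thm:riemprod} both factors are again manifolds with invariant horospherical mean curvature function; the one-dimensional factor is $\R$, while the two-dimensional factor, being a simply connected complete $2$-dimensional manifold with invariant horospherical mean curvature function, has non-positive constant curvature by the corresponding proposition in Section \ref{sec:background}, hence is isometric to $\R^2$ or $\Hh^2$. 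Therefore $X$ is isometric, up to scaling, to $\R^3$ or $\Hh^2\times\R$, completing the list.

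The main obstacle is the higher-rank step: upgrading the pointwise existence of parallel Jacobi fields ($\rank(X)\ge 2$) to an honest de Rham factor is delicate without nonpositive curvature. I would either establish the splitting directly in this very low-dimensional homogeneous setting, or, as a robust fallback, bypass it using Kowalski's explicit model list. In the latter route the compact models $S^3$, $SU(2)$ and the product $S^2\times\R$ (whose $S^2$-factor contributes conjugate points) are immediately discarded, and it remains to check that the naturally reductive left-invariant metrics on $\widetilde{SL(2,\R)}$ and $Nil^3$ have conjugate points. Here Proposition \ref{prop:3dimrankone} is a useful shortcut: any such model without conjugate points would, if of rank one, have constant curvature and hence not be one of these Lie-group models, while if of higher rank it would split off a factor and again fail to be one of these irreducible models; either way one reaches a contradiction, leaving exactly $\R^3$, $\Hh^2\times\R$ and $\Hh^3$.
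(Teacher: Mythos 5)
Your proposal starts from the same place as the paper (Kowalski's list \cite{Ko}), but it diverges where the real work lies, and both of your routes have genuine gaps. First, your rank-one branch rests on the claim that a homogeneous manifold without conjugate points has continuous asymptote, which would give continuity of $h$ via Lemma \ref{lem:contasymp}. No such theorem is available: Remark \ref{rem:continasympt} only covers manifolds with bounded asymptote (e.g.\ no focal points), and the left-invariant metrics on $\widetilde{SL(2,\R)}$ and the Heisenberg group are not known a priori to be of that kind. Indeed, the paper keeps continuity of $h$ as an explicit \emph{hypothesis} in Theorem \ref{thm:main2} precisely because it does not follow from the D'Atri property (Proposition \ref{prop:DAhoromean} yields only reversibility and flow-invariance). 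Without continuity you cannot invoke Proposition \ref{prop:3dimrankone} or any of the Section \ref{sec:rankone} machinery. Second, your higher-rank branch requires upgrading $\rank(X)\ge 2$ to a de Rham splitting; as you concede, this is not established for manifolds without conjugate points (there is no higher-rank rigidity or parallel-distribution argument in this generality). The fatal point is that your ``robust fallback'' does not actually bypass this: when you argue that a Lie-group model without conjugate points ``if of higher rank \ldots would split off a factor,'' you are reusing exactly the unproven splitting step, so the fallback is circular. Both branches of the contradiction argument for $\widetilde{SL(2,\R)}$ and $Nil^3$ therefore remain open in your write-up.

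What the paper does instead is elementary and direct: after discarding $\Ss^3$, $SU(2)$ and $\Ss^2\times\R$ (compactness, respectively a compact factor, forces conjugate points), it eliminates the Heisenberg group by citing O'Sullivan's theorem \cite{O'Sul} that every non-abelian nilpotent Lie group with left-invariant metric has conjugate points, and it eliminates $\widetilde{SL(2,\R)}$ by an explicit computation in Kowalski's coordinates: it writes down the covariant derivatives and curvature terms, and exhibits along the geodesic $s\mapsto\varphi(0,t,s)$ a Jacobi field of the form $J(s)=u_1(s)V_1+u_2(s)V_2$ with $u_1(s)=\sqrt{2}\,|a+b|e^{-t}\bigl(\cos(\sqrt{b}\,s)-1\bigr)$, $u_2(s)=\sin(\sqrt{b}\,s)$, which vanishes at $s=0$ and $s=2\pi/\sqrt{b}$, so conjugate points exist. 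If you want to salvage your approach, the minimal repair is to replace both branches of your dichotomy for the two Lie-group models by these concrete conjugate-point arguments; at that point the rank dichotomy becomes unnecessary, which is exactly why the paper never introduces it here.
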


\begin{proof} We start with the following classification of all $3$-dimensional simply connected D'Atri spaces, given in \cite[Theorem 2]{Ko}:
\begin{enumerate}
    \item[(a)] For $c>0$,
    $$\R^3, \quad \Ss^3(c), \quad \Hh^3(-c), \quad \Ss^2(c) \times \R, \quad \Hh^2(-c)\times \R, $$ where $\Ss^n(c) \subset \R^{n+1}$ denotes the round sphere of curvature $c$,
\item[(b)]
$SU(2)$ (which is diffeomorphic to the $3$-sphere, but with a more general left-invariant metric, see \cite{Ko} for more details),
\item[(c)]
The universal covering $X = \widetilde{SL(2,\R)}$ of $SL(2,\R)$ with a left-invariant metric. $X$ has the following global coordinate system $\R^3 \ni (t,x,y) \to \varphi(t,x,y) \in X$, and the metric is explicitely given by
$$ds^2=\frac{1}{ \left | a+b\right |}dt^2+ \left | a+b\right |e^{-2t}dx^2+(dy+\sqrt{2b}e^{-t}dx)^2,$$ 
with parameters $b>0$ and $a+b<0$.
\item[(d)]
The Heisenberg group $X$ with any left invariant metric. $X$ has the following global coordinate system $\R^3 \ni (x,y,z) \to \varphi(x,y,z) \in X$, and the metric is explicitely given by
$$ds^2=\frac{1}{b}dx^2+ dz^2+(dy -xdz)^2,$$ with a parameter $b>0$.
\end{enumerate}
Since compact simply connected Riemannian manifolds have always conjugate points, the only surviving examples in (a) are $\R^3, \Hh^3(-c)$ and $\Hh^2(-c) \times \R$. For the same reason, $SU(2)$ with any left-invariant metric has conjugate points. Moreover, every non-abelian nilpotent Lie group with left invariant metric has conjugate points (see \cite[Corollary 2]{O'Sul}), which eliminates the Heisenberg group in (d).
To complete the classification result, we need to show that the manifolds in (c) have conjugate points. 

\medskip

Let $X = \widetilde{SL(2,\R)}$  with global coordinate system $\R^3 \ni (t,x,y) \to \varphi(t,x,y) \in X$.
The covariant derivatives are given by
\begin{align*}
\nabla_{\partial_t} \partial_t &= 0, \\
\nabla_{\partial_t} \partial_x = \nabla_{\partial_x} \partial_t &= \left( -1-\frac{b}{|a+b|}\right)\partial_x + \frac{\sqrt{b}(|a+b|+2b)e^{-t}}{\sqrt{2}|a+b|} \partial_y,\\
\nabla_{\partial_t} \partial_y = \nabla_{\partial y} \partial_t &= - \frac{\sqrt{b}e^t}{\sqrt{2}|a+b|}\partial_x + \frac{b}{|a+b|}\partial_y, \\
\nabla_{\partial_x} \partial_y = \nabla_{\partial y}\partial x &= \frac{|a+b|\sqrt{b}e^{-t}}{\sqrt{2}} \partial_t, \\
\nabla_{\partial_y} \partial_y &= 0,
\end{align*}
and the curve $\gamma(s) = \varphi(0,t,s)$ with $\dot \gamma(s) = \partial_y$ is a unit speed geodesic for any fixed choice of $t \in \R$.
We show that there are conjugate points along $\gamma$. We have the following results for the Riemannian curvature tensor:
\begin{align*}
R(\partial_t,\partial_y)\partial_y&=\frac{b}{2}\partial_t, \\
R(\partial_x,\partial_y)\partial_y&=-\frac{b}{2}\partial_x+\frac{b\sqrt{2b}e^{-t}}{2}\partial_y.
\end{align*}
Note that the global vector fields
$$ V_1 = \partial_t, \quad V_2 = \partial_x - \sqrt{2b} e^{-t} \partial_y, \quad V_3 = \partial_y $$
are pairwise orthogonal with
\begin{align*}
\nabla_{V_3}V_1=&-\frac{\sqrt{2b}e^t}{2|a+b|}\left(\partial_x-\sqrt{2b}e^{-t}\partial_y\right)=\alpha V_2,\\
\nabla_{V_3}V_2=&\nabla_{\partial_y}\partial_x-\sqrt{2b}e^{-t}\nabla_{\partial_y}\partial_y
=\frac{1}{2}\sqrt{2b}|a+b|e^{-t}V_1
=\beta V_1,
\end{align*}
where 
\begin{equation*}
\alpha=-\frac{\sqrt{2b}e^t}{2|a+b|}, \quad \beta=\frac{1}{2}\sqrt{2b}|a+b|e^{-t},
\end{equation*}
and
\begin{align*}
R(V_1,V_3)V_3&=\frac{b}{2}V_1,\\
R(V_2,V_3)V_3&=-\frac{b}{2}\left(\partial_x-\sqrt{2b}e^{-t}\partial_y\right)=-\frac{b}{2}V_2.
\end{align*}
We introduce the following vector field along $\gamma$:
$$ J(s) = u_1(s) V_1(\gamma(s)) + u_2(s) V_2(\gamma(s)), $$
and we obtain
\begin{align*}    J'(s)&=u_1'(s)V_1+u_2(s)\nabla_{\partial_y}V_2+u_2'(s)V_2+u_1(s)\nabla_{\partial_y}V_1\\
    &=(u_1'(s)+\beta u_2(s))V_1+(u_2'(s)+\alpha u_1(s))V_2,\\
    J^{\prime\prime}(s)&=(u_1^{\prime\prime}(s)+2\beta u_2'(s)+\alpha\beta u_1(s))V_1
    +(u_2^{\prime\prime}(s)+2\alpha u_1'(s)+\alpha\beta u_2(s))V_2,\\
R(J(s),V_3)V_3&=\frac{b}{2}u_1(s)V_1-\frac{b}{2}u_2(s)V_2.
\end{align*}
It follows that $J(s)$ satisfies the Jacobi equation 
$J^{\prime\prime}(s)+R(J(s),\partial_y)\partial_y=0$ if and only if
\begin{align*}
u_1''+\sqrt{2b}|a+b|e^{-t}u_2' &= 0, \\
u_2'' - \frac{\sqrt{2b}}{ |a+b|}e^t u_1' - b u_2 &= 0.
\end{align*}
It is easy to see that
\begin{align*}
u_1(s) &= \sqrt{2}|a+b|e^{-t}\left(\cos(\sqrt{b} s)-1\right), \\
u_2(s) &= \sin(\sqrt{b} s)
\end{align*}
are solutions of this system of differential equations and that, for this choice of $u_1, u_2$, the Jacobi field $J(s)$ satisfies $J(0)=0$ and $J(2\pi/\sqrt{b})=0$. 
This complete the proof that the manifolds in (c) have conjugate points.
\end{proof}

In view of the classification results Propositions \ref{prop:3dimrankone} and \ref{prop:3dimDAtri}, is is natural to conjecture the following.

\begin{conjecture} \label{conj:3dimclass}
Let $(X,g)$ be a $3$-dimensional simply connected manifold with invariant horospherical mean curvature function. Then $X$ is --  up to scaling -- isometric to the Euclidean space $\R^3$, the Riemannian product $\Hh^2 \times \R$, or $\Hh^3$.
\end{conjecture}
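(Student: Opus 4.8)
The plan is to argue by a case analysis on the geometric rank of $X$, which in dimension three can only take the values $1$, $2$ or $3$, since $D(v)$ acts on the two-dimensional space $v^\bot$ and $\rank(v) = 1 + \dim \ker D(v)$. First I would dispose of the rank one case: if $\rank(X) = 1$, then Step 4 in the proof of Theorem \ref{thm:main1} (Subsection \ref{subsec:step4}) shows that $\det D$ is a positive constant, so every unit vector is rank one and $X$ is asymptotically harmonic. Since $h \ge 0$ for manifolds with invariant horospherical mean curvature function, Proposition \ref{prop:3dimrankone} then forces constant non-positive curvature, and $X$ is isometric, up to scaling, to $\R^3$ (when $h \equiv 0$) or to $\Hh^3$ (when $h > 0$).

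At the opposite extreme, suppose $\rank(X) = 3$, so that $\dim \ker D(v) = 2$ for every $v \in SX$, i.e.\ $D(v) = U(v) - S(v) \equiv 0$. Reversibility gives $h(v) + h(-v) = \trace D(v) = 0$, which together with $h \ge 0$ yields $h \equiv 0$ and $U(v) = S(v)$ for all $v$. Since the generalized rank identifies $\ker D(v) = v^\bot$ with a two-dimensional space of parallel orthogonal Jacobi fields along $c_v$ (extending the Ballmann--Brin--Eberlein definition), each such field $J$ satisfies $R_v(t) J(t) = - J''(t) = 0$; hence $R_v(t)$ annihilates the whole plane $\phi^t(v)^\bot$, the Jacobi operator vanishes identically, and $X$ is flat, so isometric to $\R^3$.

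The genuinely delicate case is $\rank(X) = 2$, which should give the reducible example $\Hh^2 \times \R$. Here $\ker D(v)$ is a line for the minimizing vectors, and the plan is to show that the resulting field of parallel Jacobi directions integrates to a parallel, totally geodesic one-dimensional foliation, producing a de Rham splitting $X = N^2 \times \R$. By Theorem \ref{thm:riemprod}, the factor $N^2$ is then itself a two-dimensional manifold with invariant horospherical mean curvature function, and by our result in the two-dimensional case it has constant non-positive curvature. Thus $N^2$ is $\R^2$ or $\Hh^2$, giving $X = \R^3$ or $X = \Hh^2 \times \R$, and closing the trichotomy.

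The hard part is precisely this splitting step in the rank two case. For Hadamard manifolds it would follow from Ballmann--Burns--Spatzier rank rigidity, but here no sign condition on the curvature is available, only the absence of conjugate points, a setting in which the passage from a parallel Jacobi line field to a de Rham decomposition is not known in general and may itself require new ideas. A second, more technical obstacle is that both Theorem \ref{thm:main1} and Proposition \ref{prop:3dimrankone} assume the uniform bounds $\Vert R \Vert \le R_0$ and $\Vert \nabla R \Vert \le R_0'$, which are not among the hypotheses of the conjecture; one would therefore need to show that invariance of $h$ already forces such bounds in dimension three, or to establish bound-free versions of these results.
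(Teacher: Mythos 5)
This statement is posed in the paper as an open problem (Conjecture \ref{conj:3dimclass}); the paper offers no proof of it, only the motivating partial results Propositions \ref{prop:3dimrankone} and \ref{prop:3dimDAtri}. Your proposal does not close it either: it is a programme whose two hardest steps you yourself flag as open, namely the de Rham splitting in the rank two case (for which nothing like Ballmann--Burns--Spatzier rank rigidity is available without a curvature sign condition) and the removal of the hypotheses $\Vert R \Vert \le R_0$, $\Vert \nabla R \Vert \le R_0'$ required by Theorem \ref{thm:main1} and Proposition \ref{prop:3dimrankone} but absent from the conjecture. An honest reduction-with-gaps is not a proof, so the conjecture remains open under your approach exactly where it was open before.

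Beyond the admitted gaps, your rank three case contains an unflagged error. With the rank definition used in this paper (Knieper's, \cite[Definition 3.1]{K}), a vector $w \in \ker D(v)$ satisfies $U(v)w = S(v)w$, so the stable and unstable Jacobi fields along $c_v$ with initial value $w$ coincide, i.e.\ $S_v(t)w = U_v(t)w$ for all $t$; it does \emph{not} follow that this common Jacobi field is parallel. The identification of $\ker D(v)$ with parallel Jacobi fields is the Ballmann--Brin--Eberlein definition, valid on Hadamard manifolds where a Jacobi field bounded on all of $\R$ is parallel by convexity of $t \mapsto \Vert J(t) \Vert^2$; the paper's definition is explicitly an \emph{extension} of that one precisely because this equivalence fails to be available for general manifolds without conjugate points (stable Jacobi fields need not even be bounded there). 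Consequently, from $D \equiv 0$ you may conclude $U \equiv S$ and $h \equiv 0$, but not $J'' = 0$, not $R_v(t) \equiv 0$, and not flatness; whether such minimal-horosphere-type conditions force flatness without a focal-point or curvature-sign hypothesis is itself an open question, as reflected in the Zimmer result quoted in Subsection \ref{subsec:riemprod}, which needs the additional assumption of no focal points to conclude that the manifold is flat. A minor further slip: in your rank one case, $\det D > 0$ together with $D \ge 0$ forces $h = \tfrac{1}{2}\trace D > 0$, so $\R^3$ cannot arise in that branch; it belongs to the rank three alternative.
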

{\it {Acknowledgments}}.
The second author thanks the Department of Mathematics, Durham University, for the hospitality during the visit 
when this work was carried out. The research of JP was supported by the LMS and the National Research Foundation 
of Korea (NRF) grant funded by the Korea government (MSIT) (RS-2024-00334956), whose support made this research possible.


\begin{thebibliography}{99}

\bibitem{All} A.-C. Allamigeon, \emph{Propri\'et\'es globales des espaces de {R}iemann harmoniques}, Ann. Inst. Fourier (Grenoble) {\bf{15}} (1965), 91--132.



\bibitem{Am} H. Amann, \emph{Ordinary differential equations}, De Gruyter Studies in Mathematics {\bf{13}}, Walter de Gruyter \& Co., Berlin, 1990.

\bibitem{BBE-85} W. Ballmann, M. Brin and P. Eberlein, \emph{Structure of manifolds of nonpositive curvature. {I}}, Ann. of Math. (2) {\bf 122}(1) (1985), 171--203.

\bibitem{BFL} Y. Benoist, P. Foulon and F. Labourie, \emph{Flots d'{A}nosov \`a{} distributions stable et instable diff\'erentiables}, J. Amer. Math. Soc. {\bf{5}}(1) (1992), 33--74.

\bibitem{Be} A. L. Besse, \emph{Manifolds all of whose geodesics are closed}, Ergebnisse der Mathematik und ihrer Grenzgebiete {\bf{93}}, Springer-Verlag, Berlin-New York, 1978.

\bibitem{BCG} G. Besson, G. Courtois and S. Gallot, \emph{Entropies et rigidit\'es des espaces localement sym\'etriques de courbure strictement n\'egative}, Geom. Funct. Anal. {\bf{5}}(5) (1995), 731--799.

\bibitem{Bol-79} J. Bolton, \emph{Conditions under which a geodesic flow is {A}nosov}, Math. Ann. {\bf 240} (1992), 103--113.

\bibitem{Brou} L. E. J. Brouwer, \emph{Beweis der Invarianz des {$n$}-dimensionalen Gebiets}, Math. Ann. {\bf 71}(3) (1911), 305--313.


\bibitem{DR} E. Damek and  F. Ricci, \emph{A class of nonsymmetric harmonic Riemannian spaces},
Bull. Amer. Math. Soc. {\bf 27} (1992), 139--142.

\bibitem{Du0} M. J. Druetta, \emph{On harmonic and $2$-stein spaces of Iwasawa type}, Differential Geom. Appl. {\bf 18} (2003), 351–362.


\bibitem{Du}
M. J. Druetta, \emph{D'atri spaces Iwasawa type}, Differential Geom. Appl. {\bf 27} (2009), 653–660.

\bibitem{Es}
J.-H. Eschenburg, \emph{Horospheres and the Stable Part of the Geodesic Flow}, Math. Z. {\bf 153}  (1977), 237-251.


\bibitem{FL} P. Foulon and F. Labourie, \emph{Sur les vari\'et\'es compactes asymptotiquement harmoniques}, Invent. Math. {\bf{109}}(1) (1992), 97--111.

\bibitem{He}
J. Heber, \emph{On harmonic and asymptotically harmonic homogeneous spaces}, Geom. Funct. Anal. {\bf 16} (2006), 869--890. 

\bibitem{He2} J. Heber, \emph{Homogeneous spaces of nonpositive curvature and their geodesic flow}, Internat. J. Math. {\bf 6} (1995) 279–296.

\bibitem{HKSh-07} J. Heber, G. Knieper and H. Shah, \emph{Asymptotically harmonic spaces in dimension 3}, Proc. Amer. Math. Soc. {\bf 135} (2007), 845--849.

\bibitem{KnDiss} G. Knieper, \emph{Mannigfaltigkeiten ohne konjugierte {P}unkte}, Bonner Mathematische Schriften {\bf{168}}, Dissertation, Rheinische Friedrich-Wilhelms-Universit\"at, Bonn, 1985. 

\bibitem{K0} G. Knieper, \emph{Hyperbolic dynamics and {R}iemannian geometry}, in Handbook of Dynamical Systems, {V}ol. 1{A} (2002), North-Holland, Amsterdam, 453--545.

\bibitem{K} G. Knieper, \emph{New results on noncompact harmonic manifolds}, Comment. Math. Helv. {\bf 87} (2012), 669--703.

\bibitem{K16} G. Knieper, \emph{A survey on noncompact harmonic and asymptotically harmonic manifolds}, in Geometry, topology, and dynamics in negative curvature, London Math. Soc. Lecture Note Ser., {V}ol. 425 (2016),
Cambridge Univ. Press, 146--197.


\bibitem{KP} G. Knieper and N. Peyerimhoff, N, \emph{Geometric properties of rank one asymptotically harmonic manifolds}, J. Differential Geom. {\bf 100} (2015), 507--532. 

\bibitem{KP2} G. Knieper and N. Peyerimhoff, \emph{Harmonic functions on rank one asymptotically harmonic manifolds}, J. Geom. Anal. {\bf 26} (2016), 750--781.

\bibitem{Ko} O. Kowalski, \emph{Spaces with volume preserving symmetries and related classes of Riemannian manifold}, in Conference on differential geometry on homogeneous spaces (Turin, 1983), Rend. Semin. Mat. Univ. Politec. Torino, Special Issue (1984), 131-158.

\bibitem{KPV} O. Kowalski, F. Pr\"ufer and L. Vanhecke, \emph{D'{A}tri spaces}, in Topics in geometry, Progr. Nonlinear Differential Equations Appl. {\bf{20}}, Birkh\"auser Boston, Boston, MA, 1996, 241--284.


\bibitem{Led1} F. Ledrappier, \emph{Harmonic measures and Bowen-Margulis measures}, Israel J. Math {\bf 71}(3) (1990), 275-287. 

\bibitem{LedShu} F. Ledrappier and L. Shu, \emph{Entropy rigidity of symmetric spaces without focal points}, Trans. Amer. Math. Soc. {\bf{366}}(7) (2014), 3805--3820.



\bibitem{Li44} A. Lichnerowicz, \emph{Sur les espaces riemanniens completement harmoniques}, Bull. Soc. Math. France {\bf 72} (1944), 146--168. 

\bibitem{Ni} Y. Nikolayevsky, \emph{Two theorems on harmonic manifolds}, Comment. Math. Helv. {\bf 80} (2005), 29--50. 

\bibitem{O'Sul} J. J. O'Sullivan, \emph{Manifolds without conjugate points}, Math. Ann. {\bf 210} (1974), 295--311.

\bibitem{RS} A. Ranjan and H. Shah, \emph{Harmonic manifolds with minimal horospheres}, J. Geom. Anal. {\bf 12} (2002), 683--694. 



\bibitem{SchSh-08} V. Schroeder and H. Shah, \emph{On $3$-dimensional asymptotically harmonic manifolds}, Arch. Math. {\bf 90} (2008), 275--278. 

\bibitem{Sz} Z. I. Szab\'o, \emph{The Lichnerowicz conjecture on harmonic manifolds}, J. Differential Geom. {\bf 31} (1990), 1--28. 

\bibitem{TV} F. Tricerri and L. Vanhecke, \emph{Naturally reductive homogeneous spaces and generalized {H}eisenberg groups}, Compositio Math. {\bf{52}}(3) (1984), 389--408.

\bibitem{W} A. G. Walker, \emph{On Lichnerowicz's conjecture for harmonic 4-spaces}, J. London Math. Soc.,  {\bf 24} (1949), 21--28.

\bibitem{Zi} W.  Ziller, \emph{Weakly symmetric spaces},
in Topics in geometry, Progr. Nonlinear Differential Equations Appl. {\bf{20}}, Birkh\"auser Boston, Boston, MA, 1996, 355--368.

\bibitem{Zi-12} A. M. Zimmer, \emph{Compact asymptotically harmonic manifolds}, J. Mod. Dyn. {\bf 6} (2012), 377--403. 

\bibitem{Zi-14} A. M. Zimmer, \emph{Boundaries of non-compact harmonic manifolds}, Geom. Dedicata {\bf 168} (2014), 339--357.



\end{thebibliography}
\end{document}